\title{Near-optimal learning of Banach-valued, high-dimensional functions via deep neural networks}
\author{Ben Adcock \thanks{Department of Mathematics, Simon Fraser University, 8888 University Drive, Burnaby BC, Canada, V5A 1S6. \emph{E-mail address:} {\tt ben\_adcock@sfu.ca} } \and Simone Brugiapaglia\thanks{Department of Mathematics and Statistics, Concordia University, J.W. McConnell Building, 1400 De Maisonneuve Blvd. W., Montréal, QC, Canada, H3G 1M8.  \emph{E-mail address:} {\tt simone.brugiapaglia@concordia.ca} } \and Nick Dexter\thanks{Department of Scientific Computing, Florida State University,
400 Dirac Science Library, Tallahassee, Florida, USA, 32306-4120.  \emph{E-mail address:}  {\tt nick.dexter@fsu.edu} } \and Sebastian Moraga \thanks{Department of Mathematics, Simon Fraser University, 8888 University Drive, Burnaby BC, Canada, V5A 1S6. 
Corresponding author: {\tt smoragas@sfu.ca}  }}
\begin{document}
\pagestyle{fancy}

\fancyhead{}

\fancyhead[OR]{B. Adcock, S. Brugiapaglia, N. Dexter, S. Moraga}
\fancyhead[OL]{Near-optimal learning via DNNs}
\setcounter{page}{1}

\maketitle

\begin{abstract}
The past decade has seen increasing interest in applying Deep Learning (DL) to Computational Science and Engineering (CSE). Driven by impressive results in applications such as computer vision, Uncertainty Quantification (UQ), genetics, simulations and image processing, DL is increasingly supplanting classical algorithms, and seems poised to revolutionize scientific computing. However, DL is not yet well-understood from the standpoint of numerical analysis. Little is known about the efficiency and reliability of DL from the perspectives of stability, robustness, accuracy, and{, crucially,} sample complexity. For example, approximating solutions to parametric PDEs is a key task in UQ for CSE. Yet, training data for such problems is often scarce and corrupted by errors. Moreover, the target function, while often smooth, is a potentially infinite-dimensional function taking values in the PDE solution space, which is generally an infinite-dimensional Banach space. This paper provides arguments for Deep Neural Network (DNN) approximation of such functions, with both known and unknown parametric dependence, that overcome the curse of dimensionality.  We establish \textit{practical existence theorems} that describe classes of DNNs with dimension-independent architecture widths and depths, and training procedures based on minimizing a (regularized) $\ell^2$-loss which achieve near-optimal algebraic rates of convergence in terms of the amount of training data $m$. These results involve key extensions of compressed sensing for recovering Banach-valued vectors and polynomial emulation with DNNs.  When approximating solutions of parametric PDEs, our results account for all sources of error, i.e., sampling, optimization, approximation and physical discretization, and allow for training high-fidelity DNN approximations from coarse-grained sample data. Our theoretical results fall into the category of non-intrusive methods, providing a theoretical alternative to classical methods for high-dimensional approximation. 
\end{abstract}

\noindent
{\bf Keywords.} high-dimensional approximation, uncertainty quantification, deep learning, deep neural networks,  Banach spaces

\smallskip\noindent
{\bf Mathematics subject classifications (2020)}: 65D40;  68T07; 68Q32

\section{Introduction}
Advances in modern hardware and software over the past decade have enabled the rapid development and training of increasingly complex and specialized Deep Neural Network (DNN) architectures. 
State-of-the-art DNNs for applications such as natural language processing now often feature hundreds of billions of model parameters. 
Impressive results have been achieved by training such models with vast datasets on large distributed computing resources. 
However, many still question their use in critical applications which require rigorous safety standards.
As Deep Learning (DL), i.e., the process of training DNNs on real-world or synthetic data, is increasingly being considered for applications in medicine, science, and engineering, it is important to quantify the efficiency and reliability of DL from both the theoretical and practical standpoints. 

A key challenge arising in many problems in Computational Science and Engineering (CSE) is the approximation of high-dimensional functions from sample data. The ever-expanding literature on the approximation theory of DNNs suggests they may offer a promising alternative for approximating such functions compared to standard techniques
\cite{Herrman2022b,Dung2022,Dung2021b,Dung2022a,Herrmann2022,Dung2021,kutyniok2020theoretical,Petersen2018,poggio2017,Daws2019b,elbrachter2021deep,Herrmann2020,
Schwab2017,Tripathy2018,e2018exponential,Montanelli2021,opschoor2019exponential,Schwab2021d,devore2021neural}. This is coupled by increasing empirical evidence that DL is a promising tool for solving challenging problems arising in CSE, for example Uncertainty Quantification (UQ) problems where the underlying model is described in terms of a parametric Differential Equation (DE) or Partial Differential Equation (PDE) 
\cite{AdcockEtAl2021MSML,DeHoop2022,Bhattacharya2021, dalsanto2020data,geist2020numerical,Kropfl2022,LeiZ2022,ThomasJGrady2022,Becker2022,Cicci2022,Heiss2021,
Khara2021,Li2021}. 
However, there is a large gap between the theory and practice of DL. Most theoretical results establish the existence of a DNN that overcomes the curse of dimensionality (in some suitable sense), but do not claim that it can be trained from data. Parametric modelling applications are also commonly \textit{data scarce}, since data is obtained through costly physical or numerical experiments. Hence it is crucial to understand the \textit{sample complexity} of training DNNs.

The purpose of this paper is to demonstrate that DNNs are effective at learning certain classes of holomorphic high-, or to be precise, infinite-dimensional functions taking values in Banach spaces. The specific classes we consider are motivated by the aforementioned applications, and supported by a large body of literature demonstrating that solution maps of parametric DEs belong to such classes (see, \cite{cohen2015high,Adcock2022} 
and references therein).   Our main results are \textit{practical existence theorems} showing that there are DNN architectures and training procedures similar to those used in practice that can efficiently (in terms of the sample
Crucially, we overcome the curse of dimensionality in both the number of DNN parameters \textit{and} the sample complexity. We also show stability with respect to the various errors arising in the training scheme, including {\textit{measurement error} (i.e., noise or other corruptions in the data)}, \textit{physical discretization error} (i.e., when the infinite-dimensional Banach space is discretized in order to perform computations) and \textit{optimization error} (i.e., inexact solution of the training problem).

As we discuss further in \S\ref{ss:theory-practice-gap}, our results use a particular \textit{handcrafted} DNN architecture. Herein only the final layer is trained, with the weights and biases in other layers being specifically chosen to emulate certain orthogonal polynomials. This differs from common practice, where generally, all layers are trained and the architectures are, arguably, simpler and smaller. However, these practical implementations currently lack theoretical guarantees. Our practical existence theorems offer a theoretical justification for the use of DL in parametric PDEs, by showing the existence of provably good architectures and training strategies for learning from limited datasets. While there are various results on the \textit{existence} of DNNs that efficiently approximate infinite-dimensional functions or operators between Banach spaces, our work is, to the best of our knowledge, the first to assert that such approximations can be efficiently learned in the scarce data regime {via DNN architectures and training procedures that are similar to those used in practice}.

\subsection{Motivations}

We now elaborate the main motivations behind this work.

\pbk
\textit{Parametric DEs.}  Although our theory applies more generally, this work is primarily motivated by \textit{parametric DEs} \cite{cohen2015high,Adcock2022,gunzburger2014stochastic}; that is to say, functions arising as parametric maps of physical systems, which are modelled as (systems of) DEs or PDEs.
 {In such problems, one considers a function $u=u(\x,\y)$, depending on parametric and physical variables  $\y \in \cU$ and $\x \in \Omega$, respectively, that arises as the solution of a DE system}
\begin{equation}
\label{para-DEs}
\cD_{\bm{x}}(u,\y)=0.
\end{equation}
{Here} $\cD_x(\cdot,\y)$ is a differential operator in $\x$ that depends on the parameters $\y$. {A classic example is the} parametric elliptic {diffusion equation}
\begin{equation}\label{eq:diffusion}
- \nabla_x (a(\x,\y) \nabla_x u(\x,\y) ) = F(\x), \quad \x \in \Omega, \qquad
u(\x,\y ) = 0, \quad \x \in \partial \Omega.
\end{equation}
Our work belongs to the class of \textit{nonintrusive methods} \cite{Pawar2019,Eigel2021b,Nikolopoulos2022,dexter2019mixed,AdcockEtAl2021MSML,adcock2022efficient,Nobile2008,Adcock2022}. We consider approximating the parametric solution map $\bm{y} \mapsto u(\cdot,\bm{y})$ from a set of $m$ noisy \textit{sample values}
\bes{
d_i = u(\cdot,\bm{y}_i) + n_i,\quad i = 1,\ldots,m.
}
This application illustrates four key challenges that motivate this work (see also \cite{Adcock2022,AdcockEtAl2021MSML}):
\bulls{
\item[(i)] The problem is high- or infinite-dimensional, since parametric DEs arising in applications may depend on many parameters to describe the modeling scenario, e.g., through the boundary or initial conditions. Infinite-dimensional approximation problems arise when, for instance, Karhunen--Lo\`eve expansions are used to model random fields describing forcing terms or diffusion coefficients in the DE.
\item[(ii)] Data is expensive to generate, and consequently scarce, since acquiring each sample $d_i$ may involve numerically resolving the solution of a PDE to sufficient accuracy.
\item[(iii)] The data $d_i$ is corrupted by errors -- for example, numerical errors from optimization procedures, measurement errors, or discretization errors from the PDE solver.
\item[(iv)] The function to be approximated $\bm{y} \mapsto u(\cdot,\bm{y})$ takes values in an infinite-dimensional function space, typically a Hilbert or Banach space.
}

\noindent
\textit{The gap between DNN approximation theory  and practical performance.}
We noted above that there is a large and growing literature on DNN approximation theory for high-dimensional approximation \cite{Dung2021,Dung2021b,MontanelliEtAl2019,opschoor2019exponential,Schwab2017,Schwab2021d,Li2019,Ryck2021tanh}. Many such results assert the existence of DNNs that achieve desirable approximation rates in relation to their complexity (width, depth, number of nonzero weights and biases, etc).  However, such results typically do not address how to learn such a DNN from data, and crucially (in view of (ii)), the \textit{sample complexity} needed to do so. On the other hand, as also mentioned, there is a growing body of evidence that DL can be an effective tool for high-dimensional parametric DE problems. This work is motivated by the desire to {narrow} this \textit{theory-to-practice gap}. It is known that there are function classes for which such gap cannot be closed \cite{Grohs2021}. 
 Our work shows that for certain classes related to parametric DEs, this is indeed possible.
 
 \pbk
\textit{From {scalar-} to Banach-valued recovery.}   
Much of the aforementioned DNN approximation theory and early uses of DL in scientific computing pertain to scalar-valued functions. However, motivated by problems such as parametric DEs, there is now a growing interest in DL for approximating operators between Banach spaces \cite{Bhattacharya2021,Lanthaler2022,Li2021,WangWang2021,Lu2021,Cai2021c}. {In the parametric DE context, one can in practice approximate the parametric solution map by separately approximating $K$ scalar quantities-of-interest $\bm{y} \mapsto u(\bm{x}_i,\bm{y})$, where $\{ \bm{x}_i \}^{K}_{i=1}$ is a finite mesh over the physical domain (e.g., a finite element mesh). However, key structure can be lost when doing this, leading to a potentially worse approximation  \cite{dexter2019mixed}. This motivates one to develop techniques for simultaneously approximating the whole solution map. Spurred by these observations, in previous work we considered approximating high-dimensional, Hilbert-valued functions using polynomials \cite{dexter2019mixed,adcock2022efficient} and DNNs \cite{AdcockEtAl2021MSML}. However, there is an increasing demand to develop numerical methods for functions taking values in Banach spaces. Banach-valued function approximation problems arise naturally in the setting parametric ODEs \cite{hansen2013sparse}. Moreover, while standard PDEs such as \R{eq:diffusion} are naturally posed in weak form in Hilbert spaces,  the efficient numerical solution of more complicated PDEs increasingly involve weak formulations in Banach spaces \cite{Cai2010,Howell2016,Gatica2020a,Farhloul2009,Colemanres2016,rojas2022banach}. More generally, Banach-valued function approximation problems arise naturally in the context of UQ within the setting of \textit{parametric operator equations} \cite{Dick2014,Dick2016b,Rauhut2017,Schwab2011b}.} Our work can also be a viewed as a contribution to the topic of learning in Banach spaces -- a area of growing interest in the machine learning community \cite{Song2013,Xu2019,Ye2022,Sri2011,Zhang2009}.

\pbk
\textit{Known versus unknown anisotropy.}   
{In general, high-dimensional approximation is only possible under some type of \textit{anisotropy} assumption, i.e., the function in question depends more strongly on certain variables than others. In some cases the nature of this anisotropy -- i.e., the order of importance of the variables and the relative strengths of the interactions between them -- may be known in advance. However, in more realistic scenarios, it is unknown. In this work, we consider both the \textit{known} and \textit{unknown anisotropy} settings. In each case, we develop DNN architectures and training procedures for learning the underlying function from limited data.}

\subsection{Contributions}\label{s:contributions}

We consider learning functions of the form $f : \cU \rightarrow \cV$, where $\cU = [-1,1]^{\bbN}$ is the centered infinite-dimensional hypercube of side length $2$ and $\cV$ is a   Banach space. Given sample points $\bm{y}_1,\ldots,\bm{y}_m \sim_{\mathrm{i.i.d.}} \varrho$, where $\varrho$ is either the uniform or Chebyshev (arcsine) measure over $\cU$, we assume the measurements take the form
\bes{
d_i= f(\y_i)+n_i,\quad i = 1,\ldots,m,
}
where the $n_i$ represent measurement errors. Motivated by applications to parametric DEs, we assume that $f$ satisfies a standard holomorphy assumption, termed \textit{$(\bm{b},\varepsilon)$-holomorphy} \cite{chkifa2015breaking,Schwab2017,Adcock2022} (see \S \ref{ss:holomorphy} for the definition).

Our main contribution is four practical existence theorems, dealing with the known and unknown anisotropy settings and the case where $\cV$ is either a Banach space or a Hilbert space (the additional structure of the latter yields somewhat improved estimates). In each theorem, we assert the existence of a DNN architecture with explicit width and depth bounds and a training procedure based on a (regularized) $\ell^2$-loss function from which any resulting DNN approximates $f$ to within an explicit error bound, with high probability. Specifically, our error bounds take the form
\be{
\label{error-intro}
\nmu{f - \hat{f}}_{L^2_{\varrho}(\cU ; \cV)} \lesssim E_{\mathsf{app}} + m^{\theta} \left ( E_{\mathsf{disc}} + E_{\mathsf{samp}} + E_{\mathsf{opt}} \right ),
}
where $\hat{f}$ is the learned approximation to $f$ and $\nm{\cdot}_{L^2_{\varrho}(\cU ; \cV)}$ is the norm on the Lebesgue-Bochner space of functions $g : \cU \rightarrow \cV$ square-integrable with respect to the measure $\varrho$. {Here $\theta = 0$ if $\cV$ is a Hilbert space and $\theta = 1/2$ (unknown anisotropy case) or $\theta =1$ (known anisotropy case) if $\cV$ is a Banach space.}  
There a several distinguishing features of our analysis that we now highlight:
\bulls{
\item[1.] Our error bound \R{error-intro} accounts for all main sources of error in the problem. Specifically:
\bulls{
\item[(i)] $E_{\mathsf{app}}$ is the \textit{approximation error}, and depends on $m$ and the smoothness of $f$ (see next). 
\item[(ii)] $E_{\mathsf{disc}}$ is a \textit{physical discretization error}. It accounts for the fact that we cannot typically perform computations in $\cV$, since it is an infinite-dimensional function space. Instead, we perform computations in some finite-dimensional subspace $\cV_K \subset \cV$, e.g., a finite element space in the case of parametric PDEs.
\item[(iii)] $E_{\mathsf{samp}}$ is the \textit{sampling error}. It is equal to
 $\sqrt{\frac1m \sum^{m}_{i=1} \nm{n_i}^2_{\cV}}$, and it accounts for the measurement errors $(n_i)_i$.
\item[(iv)] $E_{\mathsf{opt}}$ is the \textit{optimization error}. It accounts for the fact the training problem is never solved exactly, but only to within some tolerance. It is proportional to the error in the loss function.
}
\item[2.] We overcome the curse of dimensionality in the approximation error. The term $E_{\mathsf{app}}$ decays algebraically fast in $m/L$, where $L$ is a (poly)logarithmic factor in $m$. Specifically, when $f$ is $(\bm{b},\varepsilon)$-holomorphic with $\bm{b} \in \ell^p(\bbN)$ for some $0 < p <1$, then
\bes{
E_{\mathsf{app}} \lesssim \pi_K \cdot \left ( \frac{m}{L} \right )^{-\sigma(p)},
}
where $\sigma(p) > 0$ is given by $\sigma(p) = \frac1p-\frac12$ if $\cV$ is a Hilbert space or, if $\cV$ is a Banach space,  {$\sigma(p) = \frac1p - 2$ with $p\leq1/2$}  (known anisotropy) or {$\sigma(p) = \frac12(\frac1p-1)$ with $p\leq 1/2$} (unknown anisotropy), and $\pi_K$ a constant depending on the finite-dimensional subspace $\cV_K$.
\item[3.]  In the Hilbert space case {our error bounds are optimal in terms of the number of samples $m$,} up to constants and (poly)logarithmic factors. Specifically, the rate $m^{1/2-1/p}$ is the best achievable for the class of functions considered, regardless of sampling strategy or learning procedure \cite{Adcock2022learning}. Our results for Banach-valued functions are near-optimal. For instance, in the known anisotropy case they are suboptimal only by a factor of {${m}^{3/2}$}. We conjecture that optimal rates (up to constants and log factors) also hold for Banach-valued functions.
\item[4.] We overcome the curse of dimensionality in the DNN architecture $\cN$. We consider either the Rectified Linear Unit (ReLU), Rectified Polynomial Unit (RePU) or hyperbolic tangent (tanh) activation function. In  the ReLU case the depth of the fully-connected architecture has explicit dependence on the smoothness of $f$ and polylogarithmic-linear scaling in $m$. For the latter two, the width and depth satisfy
\be{
\label{depth-width-bounds}
\mathrm{depth}(\cN) \lesssim \log(m),\qquad \mathrm{width}(\cN) \lesssim \begin{cases} m^2 & \text{known anisotropy} \\ m^{3 + \log_2(m)} & \text{unknown anisotropy} \end{cases} .
}
\item[5.] We analyze both the known and unknown anisotropy settings. In the Hilbert space case, the only differences between the two are the width of the DNN architecture and the (poly)-logarithmic term $L$. Both the depth and the approximation error $E_{\mathsf{app}}$ have the same bounds.
\item[6.] We analyze Banach-valued functions. As observed, previous work has generally considered either scalar- or Hilbert-valued functions. To the best of our knowledge, these are first theoretical results on learning Banach-valued functions from samples with DNNs.
}
Similar to past works in DNN approximation theory \cite{Herrman2022b,Dung2021b,Dung2021,AdcockEtAl2021MSML,adcock2020gap,opschoor2019exponential,Daws2019b,Herrman2022b,Schwab2017,Schwab2021d,Opschoor2022b}, 
our main results are proved by drawing a connection between DNNs and algebraic polynomials. We use DNNs to emulate polynomial approximation via least squares (in the known anisotropy case) and compressed sensing (in the unknown anisotropy case). As a by-product, we also show guarantees for polynomial approximation to infinite-dimensional Banach-valued functions from limited samples. To the best of our knowledge, these results are also new.

\subsection{The gap between theory and practice}\label{ss:theory-practice-gap}

To be precise, in our networks only the parameters in the final layer are trained, which results in a convex optimization problem. The other parameters are \textit{handcrafted} and designed to (approximately) emulate suitable orthonormal polynomials. A consequence of this approach is that the ensuing DNN training strategies are not expected to yield superior performance over the corresponding (least-squares or compressed sensing-based) polynomial approximation procedures. Thus, our main results should be interpreted as a primarily theoretical contribution, i.e., showing the existence of DNN architectures and training strategies  for learning such functions from limited datasets that are near-optimal in terms of the amount of training data $m$. 

Note that this approach is not exclusive to polynomial-based methods. Due to universal approximation theory, one could take other types of methods for approximating functions from samples, emulate the constituent functions via DNNs, and thus obtain DNN architectures and training procedures (i.e., maps from training data to DNNs) with theoretical guarantees. See, e.g., \cite{kutyniok2020theoretical} for one such approach based on reduced basis methods. What differentiates our results is that the ensuing DNN architectures and training procedures are similar to those used in practice. Therefore, our work provides some justification for the application of DL to parametric PDEs, where superior performance over state-of-art techniques, including polynomial-based methods, has been recently observed \cite{AdcockEtAl2021MSML}. Moreover, our results also provide credence to various empirical observations about DL for these applications. First, it has been observed that ReLU activations often lead to worse practical performance with similar-sized architectures than smoother activations. In our setting, we require deeper and wider ReLU networks to obtain the same rates. Second, it has been observed that width is more important than depth in such applications. This broadly agrees with our width and depth bounds \R{depth-width-bounds}. See also \cite{Ryck2021tanh} for similar discussion.

However, it is important to stress that there still remains a substantial gap between theory and practice. Practical DL strategies train all (or most) layers via a nonconvex optimization problem and typically employ simpler and smaller architectures (see, e.g., the bounds in \R{depth-width-bounds}). Yet, they currently lack theoretical guarantees. Further narrowing this gap is an interesting objective for future work.

\subsection{Other related work}\label{ss:otherwork}

We now describe how our work relates to several existing areas of research not previously discussed.

{The class of $(\bm{b},\varepsilon)$-holomorphic function was introduced and formalized in the context of parametric DEs \cite{chkifa2015breaking,Schwab2017,cohen2015high}. Many works have developed approximation methods for such functions, often involving multivariate polynomials. Early approaches include interpolation schemes using sparse grids (see \cite{cohen2015high,chkifa2014high,cohen2018multivariate} and \cite[Chpt.\ 1]{Adcock2022} and references therein). As discussed in \cite[\S 1.7]{Adcock2022}, these methods are best suited to the known anisotropy setting, since they generally require a priori knowledge of a good polynomial subspace in which to construct the approximation. They also do not generally obtain {optimal stability bounds in terms of} $m$, due to growing Lebesgue constants {\cite{chkifa2014high,chkifa2015breaking,cohen2018multivariate}}. More recently, there has been significant focus on least-squares methods \cite{cohen2013stability,chkifa2015discrete, migliorati2014analysis} and methods based on compressed sensing \cite{doostan2011nonadapted,mathelin2012compressed,rauhut2012sparse}. See \cite[Chpts.\ 5 \& 7]{Adcock2022} for reviews. The former are suitable for the known anisotropy case, since they likewise requires knowledge of a good polynomial subspace. In contrast, the latter can handle the unknown anisotropy setting. As noted above, our main results are established by emulating such methods with DNNs.
}

Using polynomial techniques to establish theoretical guarantees for DNN training from limited samples was previously considered in \cite{adcock2020gap,AdcockEtAl2021MSML}. These works consider either scalar- or Hilbert-valued functions in finite dimensions and approximation via ReLU DNNs. Our work can be considered an extension to the (significantly more challenging) infinite-dimensional and Banach-valued setting, while using also other families of DNNs. As noted above, emulation of DNNs by polynomials is a well-established technique in DNN approximation theory. In this paper, we use ideas from \cite{opschoor2019exponential,Schwab2017,Li2019,Ryck2021tanh} to establish our main results. 
For more on polynomial-based methods for high-dimensional approximation, see \cite{adcock2022efficient,Adcock2022,cohen2015high,chkifa2018polynomial} and references therein.
See also \cite{kutyniok2020theoretical} for a different approach based on reduced bases to derive DNN approximation results for parametric PDEs.

Another line of recent DL research involves learning operators between function spaces \cite{Bhattacharya2021,WangWang2021,Lu2021,Cai2021c,Li2021,Lanthaler2022,Nelsen2021}.
 This is motivated in great part by parametric PDEs, where the operator is, for example in the case of \R{eq:diffusion}, the mapping from the diffusion coefficient to the PDE solution $a \in L^{\infty}(\Omega) \rightarrow u = u(\cdot,a) \in H^1_0(\Omega)$. Our work is related to this line of investigation in that we assume a parametrization of $a$ in terms of an infinite vector $\bm{y} \in [-1,1]^{\bbN}$ for which the map $\bm{y} \mapsto u(\cdot,\bm{y})$ is holomorphic. However, it is also different in scope, as we consider approximating an arbitrary Banach-valued, holomorphic function $f : \cU \rightarrow \cV$ which may or may not arise as the solution map of a parametric DE. We note also that many of the aforementioned works assume a Hilbert space formulation, whereas we consider Banach spaces. See \S \ref{s:conclusions} for some further discussion in relation to this line of work.
 
\subsection{Outline}
 
The outline of the remainder of this paper is as follows. In \S \ref{S:Prelim}, we introduce notation and other necessary concepts, including relevant spaces and DNNs.
 Next, \S \ref{S:problem_statement} gives the formal definition of the approximation problem,  the holomorphy assumption on $f$ along with a precise definition of  known and unknown anisotropy. In \S \ref{s:mainres}, we state our four main results and   discuss  their implications.  The next four sections, \S \ref{ss:vector_recovery}--\ref{S:proofs}, are devoted to the proofs of these results. See \S \ref{ss:vector_recovery} for an outline of these sections.
  Finally, in \S \ref{s:conclusions} we present conclusions and topics for future work.

\section{Preliminaries}\label{S:Prelim}

In this section, we introduce key preliminary material needed later in the paper.

\subsection{Notation}\label{S:Notation}
We first introduce some notation. {Let $\bbN$ and $\bbN_0$ be the set of positive and nonnegative integers, respectively}. We write {$\bbN_0^\bbN$ ($\bbR^{\bbN}$) } for the vector space of {nonnegative (real)} sequences indexed over $\bbN$, and $\bbR^n$  for the vector space of real vectors of length $n$. In either space, we write $\bm{e}_j$ for the standard basis vectors, where $j \in \bbN$ or $j \in [n]$. Here and throughout, we write $[n] = \{1,\ldots,n\}$ and {$\bbR_+$ for positive numbers}.

For $1 \leq p \leq \infty$, we write $\nm{\cdot}_{p}$ for the usual vector $\ell^p$-norm and for the induced matrix $\ell^p$-norm. When $0 < p <1$, we use the same notation to denote the $\ell^p$-quasinorm. We also consider infinite multi-indices in $\bbN^{\bbN}_{0}$ with at most finitely-many nonzero terms. Let
\be{
\label{l0norm}
\nm{\bm{\nu}}_0 = | \supp(\bm{\nu}) |,\quad \text{where }\supp(\bm{\nu}) = \{ k : \nu_k \neq 0 \} \subseteq \bbN
}
for a multi-index $\bm{\nu} \in \bbN^{\bbN}_0$. Then we define
\begin{equation}
\label{Fdef-2}
\cF := \lbrace  \bnu = (\nu_k)^{\infty}_{k=1} \in \bbN_0^{\bbN} : \nm{\bm{\nu}}_{0} < \infty\rbrace  
\end{equation}
as the set of multi-indices with at most finitely-many nonzero entries.
We also write $\bm{0}$ and $\bm{1}$ for the multi-indices consisting of all zeros and all ones, respectively.  Finally, we use the inequality $\bm{\mu} \leq \bm{\nu}$ in a componentwise sense, i.e., $\bm{\mu} \leq \bm{\nu}$ means that $\mu_k \leq \nu_k$ for all $k \in \bbN$.

\subsection{Function spaces}\label{S:Setup}
Throughout this paper, we consider the domain 
\bes{
\cU = [-1,1]^{\bbN}
}
with variable $\bm{y} = (y_j)^{\infty}_{j=1} \in \cU$.
We consider probabilities measures on $\cU$ that arise as tensor products of probability measures on the interval $[-1,1]$. Specifically, we focus on the uniform and Chebyshev (arcsine) measures
\begin{equation}\label{meas-unif}
\D \varrho(y) =  2^{-1}\D y,
\quad \mathrm{and} \quad \D \varrho(y) =   \dfrac{1}{\pi \sqrt{1-y}} \D y,\quad \forall y \in [-1,1],
\end{equation}
respectively. The Kolmogorov extension theorem guarantees the existence of a probability measure on $\cU$ arising as the infinite tensor-product of such a one-dimensional measure (see, e.g. \cite[\S 2.4]{Tao2011}). Abusing notation, we denote this measure as 
\begin{equation}\label{measure_infty}
\varrho = \varrho \times \varrho \times \cdots ,
\end{equation}
where on the right-hand side $\varrho$ denotes the one-dimensional measure.

We let $(\cV,\nm{\cdot}_{\cV})$ be a Banach space over $\bbR$ and write $(\cV^*,\nm{\cdot}_{\cV^*})$ for its dual. We write $v \in \cV$ for an arbitrary element of $\cV$ and $v^* \in \cV^*$ for an arbitrary element of $\cV^*$. We let $\ip{\cdot}{\cdot}_{\cV}$ be the duality pairing between $\cV$ and $\cV^*$, i.e.\ $\ip{v^*}{v}_{\cV} = v^*(v)$. Note that
\begin{equation}\label{norm_Vector}
\|v\|_{\cV} 
= \max_{\substack{ v^* \in \cV^* \\ \|v^*\|_{\cV^*} = 1 }} |\ip{v^*}{v}_{\cV} |,\quad \forall v \in \cV,
\end{equation}
(see, e.g., \cite[Cor.\ 1.4]{brezis2010functional}). 
Given $1 \leq p \leq \infty$, we define weighted Lebesgue-Bochner space $L^p_{\varrho}(\cU;\cV)$
as the space consisting of (equivalence classes of) strongly $\varrho$-measurable functions $f: \cU \rightarrow \cV$ for which $\nm{f}_{L^{p}_{\varrho}(\cU ; \cV)} < \infty$, where 
\be{
\nm{f}_{L^p_{\varrho}(\cU;\cV)} : = 
\begin{cases} 
\left( \int_{\cU} \nm{f( \y)}_{\cV}^p \D \varrho (\y) \right)^{1/p} & 1 \leq p < \infty 
\\
\mathrm{ess} \sup_{\y \in \cU} \nm{f(\y)}_{\cV}  & p = \infty
\end{cases}
,
\label{L_p_U_V}
}
and the essential supremum is taken with respect to $\varrho$ in the case $p=\infty$.
We will use these norms to measure the various approximation errors in our subsequent theory.

In general, we cannot work directly in the space $\cV$, since it is usually infinite dimensional. Hence, we also consider a finite-dimensional subspace 
\begin{equation}
\label{eq:conforming}
\cV_K  \subseteq \cV.
\end{equation}
Assuming  \eqref{eq:conforming} corresponds to considering so-called conforming discretizations in the context of finite elements. We let $\{ \varphi_k \}^{K}_{k=1}$ be a (not necessarily orthonormal) basis of $\cV_K$, where $ K  = \dim(\cV_{K})$.  
We also assume that there is a bounded linear operator 
\be{
\label{Ph-def}
\cP_K : \cV \rightarrow \cV_K.
}
To simplify several of the subsequent bounds, we define
\be{
\label{ch-def}
\pi_{K} = \max \left \{ \nm{\cP_K}_{\cV \rightarrow \cV} , 1 \right \}
}
(the assumption $\pi_{K} \geq 1$  is convenient and is of arguably of little consequence for practical purposes). Note that we do not specify a particular form for this operator -- see Remark \ref{rem:Ph} for some further discussion. In particular, we do not use $\cP_{K}$ in our various DNN approximations. It only appears in the resulting error bounds.

For convenience, if $f \in L^2_{\varrho}(\cU ; \cV)$ then we write $\cP_K(f)$ for the function defined pointwise as
\be{
\label{Phf-def}
\cP_K(f)(\y) = \cP_K(f(\y)),\quad \y \in \cU.
}
Later, our various error bounds involve $f - \cP_K(f)$ measured in a suitable Lebesgue-Bochner norm.

\rem{\label{rem:Ph}
When $\cV$ is a Hilbert space, it is natural to choose $\cP_K$ as the orthogonal projection onto $\cV_{K}$. Then $\cP_K(v)$ is the best approximation in $\cV_K$ of $v \in \cV$ and \R{ch-def} holds with $\pi_{K} = 1$. 
The case of Banach spaces is more delicate. First, the best approximation problem
\bes{
\inf_{z \in \cV_K} \|v-z\|_{\cV}
}
may not have a unique solution (a solution always exists since $\cV_K$ is a finite-dimensional subspace). Thus the best approximation map
\be{
\label{def_proxmap}
\cP_{\cV_K} : v \mapsto \{v_K \in \cV_K: \|v-v_K\|_{\cV} = \inf_{z \in \cV_K} \|v-z\| \}
}
is set-valued. Furthermore there does not generally exist a linear operator $\cP_K : \cV \rightarrow \cV_K$ with $\cP_K(v) \in \cP_{\cV_K}(v)$, $\forall v \in \cV$.
The works \cite{Deutsch1982,Holmes1968} establish conditions on the set $\cP_{\cV_K}(v)$  that imply linearity of such operators in a general normed linear space $\cV$. From \cite[Lem.\ 2.1]{Deutsch1982}, such an operator is bounded with $\|\cP_K(v)\|_{\cV} \leq 2 \|v\|_{\cV} $. From \cite[Thm.\ 2.2]{Deutsch1982}, a necessary and sufficient condition for the existence of such a linear operator is that  $\mathrm{ker}(\cP_{\cV_K})$ contains a closed subspace $\cW$ such that $\cV=\cV_K+\cW$, where
\bes{
\mathrm{ker}(\cP_{\cV_K}):= \{v \in \cV:  0 \in  \cP_{\cV_K}(v) \}.
}
Note further that if $\cV$ is strictly convex and $\mathrm{ker}(\cP_{\cV_K})$ is a subspace of $\cV$, then $\cP_{\cV_K}$ is linear \cite[Cor. 2.5]{Deutsch1982}. Moreover, if $\cV=L^p(\Omega)$ with $1<p<\infty$, the operator $\cP_K$ is linear if and only if the quotient space $L^p(\Omega)/\cV_K$ is isometrically isomorphic to some other $L^q(\Omega)$ space \cite[Thm.\ 5]{Ando1966}. 
}

\subsection{Deep neural networks}\label{S:DNN}
Let $f : \cU \rightarrow \cV$ be the unknown function to recover. Consider first the approximation of $f(\bm{y})$ in $\cV_K$. Using the basis $\{ \varphi_k \}^{K}_{k=1}$ we can write this as
\bes{
f(\y) \approx  \sum^{K}_{k=1} c_{k}(\bm{y}) \varphi_k.
}
Notice that the coefficients $c_k$ in this approximation are scalar-valued functions of $\bm{y}$, i.e., $c_k : \cU \rightarrow \bbR$.
Our objective is to approximate the coefficients using a DNN. 
To do this, we consider standard feedforward DNN architectures of the form
\be{
\label{Phi_NN_layers}
\Phi : \bbR^n \rightarrow \bbR^K,\ \bm{z} \mapsto \Phi(\bm{z}) = \cA_{D+1} ( \sigma ( \cA_{D} ( \sigma ( \cdots \sigma ( \cA_0 (\bm{z}) ) \cdots ) ) ) ).
}
Here $\cA_l : \bbR^{N_{l}} \rightarrow \bbR^{N_{l+1}}$, $l = 0,\ldots,D+1$ are affine maps and $\sigma$ is the activation function, which we assume acts componentwise, i.e.\ $\sigma(\bm{z}) := (\sigma(z_i))^{n}_{i=1} $ for $\bm{z} = (z_i)^{n}_{i=1}$. In this work, we consider either the Rectified Linear Unit (ReLU)
\bes{
\sigma_1(z):= \max\{ 0,z\},
}
Rectified Polynomial Unit (RePU)
\bes{
\sigma_{\ell}(z):= \max\{ 0,z \}^{\ell},\qquad \ell = 2,3,\ldots
}
or hyperbolic tangent (tanh)
\bes{
\sigma_0(z)= \frac{\E^{z}-\E^{-z}}{\E^{z}+\E^{-z}}
}
activation function. See Remark \ref{rem:other-activation} below for some discussion on other activation functions.

The values $\{ N_l \}^{D{+1}}_{l=1}$ are the widths of the hidden layers. We also write $N_0 = n$ and $N_{D+{2}} = K$. We write $\cN$ for a class of DNNs of the form \eqref{Phi_NN_layers} with a fixed architecture (i.e., fixed activation function, depth and widths). For convenience, we define
\bes{
\mathrm{width}(\cN) = \max \{ N_1,\ldots, N_{D+1} \},\qquad \mathrm{depth}(\cN) = D.
}
Observe that the DNN \R{Phi_NN_layers} has domain $\bbR^n$, whereas the coefficients $c_k$ have domain $\cU \subset \bbR^{\bbN}$. In order to use a DNN to approximate these functions, we also require a certain restriction operator. Let $\Theta \subset \bbN$ with $|\Theta| = n$. Then we define the \textit{variable restriction operator}
\be{
\label{def:cT}
\cT_{\Theta} : \bbR^{\bbN} \rightarrow \bbR^{n},\ \bm{y} = (y_j)^{\infty}_{j=1} \mapsto (y_j)_{j \in \Theta}.
}
Given a DNN of the form \eqref{Phi_NN_layers}, we now consider the approximation
\be{
\label{fPhi_DNN_finite}
f(\y) \approx f_{\Phi,\Theta}(\bm{y}) = \sum^{K}_{k=1} (\Phi  \circ \cT_{\Theta} (\bm{y}))_k \varphi_k.
}
In our main theorems, besides describing the DNN architecture $\cN$ we also describe a suitable choice of set $\Theta$ defining the variable restriction operator.

\rem{
[Other activation functions]
\label{rem:other-activation}
Our results for ReLU automatically transfer over to the case of the leaky ReLU (see, e.g., \cite[\S 5.1]{AdcockEtAl2021MSML} and \cite[Rem.\ 3.3]{geist2020numerical}). Moreover,  the proofs of our main results only require the existence of a DNN with activation function  $\sigma$  capable of approximating the product of two numbers. Then, following the proof of \cite[Prop.\ 2.6]{opschoor2019exponential}, one can construct an approximation to the multiplication of $n$ numbers as a binary tree of DNNs and subsequently use this to emulate orthogonal polynomials with DNNs. 
This leaves the door open to future extensions with other activation functions such as the logistic function or exponential linear unit, and variations thereof. For more details, we refer to \cite{Maas2013,geist2020numerical,Li2018,LeCun2012}. We restrict our analysis to the three activation functions in this section since they are widely used both in theory and in practice. 
}

\section{The learning problem}\label{S:problem_statement}

Let $f : \cU \rightarrow \cV$ be a Banach-valued function. We consider sample points $(\y_i)_{i=1}^{m}$, where $\y_i \sim_{\mathrm{i.i.d.}} \varrho$, and assume noisy evaluations of $f$ of the form
\be{
\label{samples_of_f}
d_i = f(\bm{y}_i) + n_i \in \cV,\quad \forall i = 1,\ldots,m,
}
where $n_i \in \cV$ is the $i$th noise term. Observe that, in contrast to \cite{AdcockEtAl2021MSML}, we do not assume the samples $d_i$ to be elements of a finite-dimensional  subspace. In typical applications, the samples are computed via some numerical routine, which employs a discretization  of $\cV$. For instance, in the case of parametric DEs, such a discretization could arise from a finite element method. In this paper we do not consider how the evaluations $f(\bm{y}_i)$ are obtained. It may be done by approximating the DE solution with parameter value $\bm{y}_i$, where $n_i$ represents the simulation error. However, it is important to note that we do not assume any structure to the noise $n_i$, other than it be small in norm.

With this in hand, the problem we consider in this paper is as follows:
\begin{problem}\label{prob:main}
Given a truncation operator $\cT_{\Theta}$ with $| \Theta | = n$ and a class of DNNs $\cN$ of the form $\Phi : \bbR^n \rightarrow \bbR^K$, use the training data $\{ (\bm{y}_i , f(\bm{y}_i) + n_i ) \}^{m}_{i=1} \subset \cU \times \cV$ to learn a DNN $ \hat{\Phi} \in \cN$, and therefore an approximation to $f$ of the form 
\be{
\label{fPhi_DNN}
f \approx f_{\hat{\Phi} , \Theta }(\bm{y}) = \sum^{K}_{k=1} (\hat{\Phi} \circ \cT_{\Theta}(\bm{y}))_k \varphi_k\quad \forall \y \in \cU.
}
\end{problem}

\subsection{$(\pmb{b},\varepsilon)$-holomorphic functions and the class $\mathcal{H}(\pmb{b},\varepsilon)$}\label{ss:holomorphy}

We now introduce the precise class of functions considered in this work. We consider Banach-valued functions $f : \cU \rightarrow \cV$ that are holomorphic in specific regions of $\bbC^{\bbN}$ defined by unions of Bernstein (poly)ellipses. We refer to, for example, \cite[Sec.\ 2.3]{adcock2022efficient} for the definition of holomorphy for Hilbert-valued functions, which readily extends to Banach-valued functions.  
In one dimension the \textit{Bernstein ellipse} of parameter $\rho > 1$ is defined by 
\begin{equation*}
\cE(\rho) = \left\lbrace \tfrac{1}{2} (z +z^{-1}): z\in \bbC, 1 \leq |z|\leq \rho\right\rbrace \subset \bbC.
\end{equation*}
Here, given $\brho = (\rho_j) _{j \in \bbN}  $ with $\brho> \bm{1}$, we define the \textit{Bernstein polyellipse} as the Cartesian product
\[
\cE(\brho) =\cE({\rho_1}) \times \cE({\rho_2}) \times \cdots   \subset \bbC^{\bbN}.
\]
Now let $0 < p <1$, $\varepsilon > 0$ and $\bm{b} = (b_{j})_{j \in \bbN} \in \ell^p(\bbN)$ with {$\bm{b} \geq \bm{0}$}. We define
\begin{equation}\label{def_R_b_e}
\cR({\bm{b},\varepsilon}) =  \bigcup \left\lbrace \cE(\brho): \brho \geq \bm{1},\  \sum_{j=1}^{\infty} \left( \dfrac{\rho_j+\rho_j^{-1}}{2} -1 \right) b_j \leq  \varepsilon \right\rbrace \subseteq \bbC^{\bbN}
\end{equation}
and
\be{
\label{B-b-eps-def}
\cH(\bm{b},\varepsilon) = \left \{ f : \cU \rightarrow \cV, \mbox{$f$ holomorphic in $\cR({\bm{b},\varepsilon})$},\ \nm{f}_{L^{\infty}_{\varrho}(\cR({\bm{b},\varepsilon}) ; \cV)} \leq 1\right \}.
}
This is the class of so-called \textit{$(\bm{b},\varepsilon)$-holomorphic} functions \cite{chkifa2015breaking,Schwab2017,Adcock2022} with $L^\infty$-norm at most one over the domain of holomorphy.

\rem{
\label{r:elliptic-case}
This class was developed in context of parametric DEs. It is known that broad families of parametric DEs \R{para-DEs} possess solution maps $\bm{y} \mapsto u(\cdot,\bm{y})$ that are $(\bm{b},\varepsilon)$-holomorphic for suitable $\bm{b}$ depending on the DE. For instance, consider the case of a parametric diffusion equation
\bes{
-\nabla \cdot (a(\bm{x},\bm{y}) \nabla u(\bm{x},\bm{y})) = F(\bm{x}),\ \bm{x} \in \Omega,\qquad u(\bm{x},\bm{y}) = 0,\ \bm{x} \in \partial \Omega,
}
with affine diffusion term given by
\begin{equation}
a(\x,\y) = a_0(\x)+ \sum_{j \in \bbN} y_j \psi_j(\x), \quad \forall \x \in \Omega, \quad \forall \y \in \cU,
\end{equation}
for functions $a_0 \in L^{\infty}(\Omega)$ and $\{ \psi_j\}_{j \in \bbN} \subset L^{\infty}(\Omega)$ satisfying $\sum_{j \in \bbN} | \psi_j(\bm{x}) | \leq a_0(\bm{x}) - r$, $\forall \bm{x} \in \Omega$, for some $r > 0$. Then it is well known that the map $\bm{y} \in \cU \mapsto u(\cdot,\bm{y}) \in H^1_0(\Omega)$ is $(\bm{b},\varepsilon)$-holomorphic with $b_j\geq  \|\psi_j\|_{L^{\infty}(\Omega)}$ and $\varepsilon < r$ (see, e.g., \cite[Sec.\ 4.2.2]{Adcock2022}). See \cite{cohen2015high,Adcock2022} and references therein for further discussion.
}

Note that the parameter  $\varepsilon$ in \R{def_R_b_e} is technically redundant, as it could be absorbed into the term $\bm{b}$. However, as mentioned in \cite[\S 3.8]{Adcock2022}, it is conventional to keep it, due to the connection with parametric DEs.

\subsection{Known versus unknown anisotropy}\label{KnownvsUnknown}

The parameter $\bm{b}\geq \bm{0}$ controls the \textit{anisotropy} of functions in the class $\cH(\bm{b},\varepsilon)$. Specifically, large $b_j$ implies that the condition
\begin{equation*}
\sum_{j=1}^{\infty} \left( \dfrac{\rho_j+\rho_j^{-1}}{2} -1 \right) b_j \leq  \varepsilon
\end{equation*}
holds only for smaller values of the parameter $\rho_j$, meaning that functions in $\cH(\bm{b},\varepsilon)$ are less smooth with respect to the variable $y_j$: namely, they have holomorphic extensions in this variable only to relatively small Bernstein ellipses. Conversely, if $b_j$ is small (or in the extreme, $b_j = 0$), then functions in $\cH(\bm{b},\varepsilon)$ possess holomorphic extensions to larger Bernstein ellipses, and are therefore smoother with respect to the variable $y_j$

In simple cases such as in Remark \ref{r:elliptic-case}, the parameter $\bm{b}$ may be known. We refer to this as the \textit{known anisotropy} setting. Here, we have a specific prior understanding about the behaviour of the target function with respect to its variables in terms of the parameter $\bm{b}$. Therefore, we can strive to use this information to design an approximation scheme.

However, in many practical settings, $\bm{b}$ may be unknown. We refer to this as \textit{unknown anisotropy}. This is often the case in practical UQ settings, where $f$ is considered a black box (i.e., the underlying DE model, if one exists, is hidden). In fact, even if the DE model is known, it may be difficult to establish holomorphy with tight estimates on the parameter $\bm{b}$. See, e.g., \cite[Chpt.\ 4]{Adcock2022}. Additionally, it is worth noting that $\bm{b}$ only conveys limited information and knowledge of it may not lead to efficient methods. For example, the functions $f(\bm{y}) = 1/(1.1-y_1)$ and $f(\bm{y}) = \sin(1000 y_2) / (1.1-y_1)$ are both $(\bm{b},\varepsilon)$-holomorphic with $\bm{b} = (10,0,0,\ldots)$. The latter varies rapidly with $y_2$, while the former does not -- information that cannot be gleaned from $\bm{b}$ alone. 
Therefore, any approximation scheme that uses $\bm{b}$ may not be able to efficiently approximate both functions simultaneously.

Motivated by this discussion, in this paper we consider both the known and unknown anisotropy settings. Specifically, we describe DNN architectures and training procedures for each case and explicit error bound for the resulting DNN approximations. In the unknown anisotropy setting, our tanh and RePU DNN architectures and training procedures are completely independent of $\bm{b}$.

\section{Main results: practical existence theorems}\label{s:mainres}

We now present the main results of this paper. 
In order to state our results, we require two additional  concepts. First, let
\bes{
\min_{\Phi \in \cN} \cG(\Phi) 
}
be a DNN training problem with associated objective function $\cG$. Then we say that $\hat{\Phi} \in \cN$ is an $E_{\mathsf{opt}}$-\textit{approximate minimizer} of this problem, for some $E_{\mathsf{opt}} \geq 0$, if
\begin{equation}\label{def:Eopt}
\cG(\hat{\Phi})  \leq  E_{\mathsf{opt}} +  \min_{\Phi \in \cN} \cG(\Phi).
\end{equation}
Second, let $\bm{b} = (b_i)_{i \in \bbN} \in \bbR^{\bbN}$ be a sequence. We define its \textit{minimal monotone majorant} as 
\be{
\label{min-mon-maj}
\tilde{\bm{b}} = (\tilde{b}_i)_{i \in \bbN},\quad \text{where }
\tilde{b}_i = \sup_{j \geq i} | b_{j}|,\ \forall i \in \bbN.
}
Then, given $0 < p < \infty$, we define the \textit{monotone $\ell^p$} space $\ell^p_{\mathsf{M}}(\bbN)$ as
\bes{
\ell^p_{\mathsf{M}}(\bbN) = \{ \bm{b} \in \ell^{\infty}(\bbN) : \nmu{\bm{b}}_{p,\mathsf{M}} : = \nmu{\tilde{\bm{b}}}_{p} < \infty  \}.
}

\subsection{Learning in the case of unknown anisotropy}\label{S:unknown_theorems}

\begin{theorem}[Banach-valued learning; unknown anisotropy]\label{t:mainthm1}
There are universal constants $c_0$, $c_1 \geq 1$ such that the following holds.
 Let $m\geq 3$,  $0 < \epsilon < 1$, $0<p \leq 1/2$, $\varepsilon > 0$, $\varrho$ be either the uniform or Chebyshev probability measure over $\cU = [-1,1]^{\bbN}$, $\cV$ be a Banach space, $\cV_K \subseteq \cV$ be a subspace of dimension $K$, $\cP_K: \cV \rightarrow \cV_K$ be a bounded linear operator, $\pi_K$ be as in \R{ch-def},
 \be{
 \label{def_L}
 L = L(m,\epsilon)=\log^4(m)  +\log( \epsilon^{-1})
 }
 and 
 \be{
 \label{def_n}
 \Theta = [n],\quad \text{where } n =  \left \lceil \frac{m}{c_0 L} \right \rceil.
 }
Then there exist
\begin{itemize}
\item[(a)] a  class $\cN^j$ of neural networks $\Phi : \bbR^n \rightarrow \bbR^K$ with either the ReLU ($j=1$), RePU $(j=\ell)$ or   tanh ($j=0$) 
activation function with $\ell = 2,3,\ldots$ and bounds for its depth and width given by
\begin{align*}\label{size_depth}
   \mathrm{width}(\cN^{1}  )  \leq   c_{1,1}   \cdot   m^{ 3+ \log_{2}(m)}, \qquad 
    \mathrm{depth}( \cN^{1}  )   \leq c_{1,2}  \cdot    \log(m)  \Big[  \log^2(m) +  p^{-1}\log(m)+  m    \Big],
\end{align*}
in the ReLU case and
  \begin{align*}
  \mathrm{width}(\cN^j  )  \leq c_{j,1}  \cdot m^{ 3+ \log_{2}(m)},\qquad
  \mathrm{depth}(\cN^j  ) \leq c_{j,2}  \cdot  \log_2(m ),
\end{align*}
in the tanh ($j = 0$) or RePU ($j ={\ell}$) cases, where $c_{j,1}$, $c_{j,2}$ are universal constants in the ReLU and tanh  cases and $c_{j,1},c_{j,2}$ depend on $\ell = 2,3,\ldots$ in the RePU case;
\item[(b)] a regularization function $\cJ : \cN^j \rightarrow [0,\infty)$ equivalent to a certain norm of the trainable parameters;
\item[(c)] a choice of regularization parameter $\lambda$ involving only ${m}$ and $\epsilon$;
\end{itemize} 
 such that the following holds for every  $\bm{b} \in \ell^{p}_{\mathsf{M}}(\bbN)$. Let $f \in \cH(\bm{b},\varepsilon)$, where $\cH(\bm{b},\varepsilon)$ is as in \eqref{B-b-eps-def}, draw $\y_1,\ldots, \y_m \sim_{\mathrm{i.i.d.}} \varrho$ and consider    noisy evaluations $d_i = f(\bm{y}_i) + n_i \in \cV$, $i = 1,\ldots,m$,  as in \eqref{samples_of_f}. Then, with probability at least $1- \epsilon$, every $E_{\mathsf{opt}}$-approximate minimizer $\hat{\Phi}$, $E_{\mathsf{opt}} \geq 0$, of the training problem
\begin{equation}\label{trainingprob1}
 \min_{\Phi \in {\cN^j}}  \cG(\Phi),\qquad \text{where }\cG(\Phi) = \sqrt{\frac1m \sum^{m}_{i=1} \nm{f_{\Phi,\Theta}(\bm{y}_i) - d_i }^2_{\cV} } +\lambda \cJ(\Phi),
\end{equation}
satisfies
\be{
\label{main_err_bd}
\| f- f_{\hat{\Phi},\Theta} \|_{L^2_{\varrho}(\cU;\cV)} \leq c_1 \left(    E_{\mathsf{app},\mathsf{UB}}+{m^{1/2}} \cdot (E_{\mathsf{disc}} +E_{\mathsf{samp}}+ E_{\mathsf{opt}})\right ),
}
where  $f_{\hat{\Phi},\Theta}$ is as in \eqref{fPhi_DNN},
\be{
\label{E123_def_1}
E_{\mathsf{app},\mathsf{UB}}= C \cdot \pi_K \cdot  \left ( \frac{m}{ L} \right )^{{\frac12 (1- \frac{1}{p})}},
\quad
E_{\mathsf{samp}} =  \sqrt{\frac1m \sum^{m}_{i=1} \nm{n_i}^2_{\cV} } ,
\quad
E_{\mathsf{disc}} = \nm{f - \cP_K(f)}_{L^{\infty}_{\varrho}(\cU ; \cV)},
}
and $C = C(\bm{b},\varepsilon,p)$ depends on $\bm{b}$, $\varepsilon$ and $p$ only.  
\end{theorem}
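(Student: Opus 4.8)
# Proof Proposal

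\textbf{Overall strategy.} The plan is to decompose the DNN learning problem into three ingredients that can be handled separately and then recombined: (1) a \emph{polynomial approximation} result showing that $(\bm{b},\varepsilon)$-holomorphic functions in $\cH(\bm{b},\varepsilon)$ are well-approximated by truncated Chebyshev/Legendre polynomial expansions over a suitable anisotropic index set, with the algebraic rate $(m/L)^{\frac12(\frac12-\frac1p)}$; (2) a \emph{compressed sensing} recovery guarantee for Banach-valued functions, showing that the underlying polynomial coefficients can be recovered from $m$ random samples by solving a (weighted) $\ell^1$-minimization / square-root LASSO-type problem, with stability to the noise terms $E_{\mathsf{samp}}$, the truncation/tail error $E_{\mathsf{app},\mathsf{UB}}$, and the discretization error $E_{\mathsf{disc}}$; (3) a \emph{DNN emulation} result showing that the relevant polynomials can be represented (or approximated to negligible error) by ReLU/RePU/tanh networks of the stated width and depth, and that the $\ell^1$-regularized training problem \eqref{trainingprob1} is equivalent — via this emulation — to the compressed sensing decoder, so that any $E_{\mathsf{opt}}$-approximate minimizer inherits the recovery bound up to an additive $E_{\mathsf{opt}}$ term. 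Since $\cV$ is only a Banach space rather than a Hilbert space, the $\ell^2$-loss in \eqref{trainingprob1} is not induced by an inner product; this is what forces the extra $m^{1/4}$ factor and must be tracked carefully throughout step (2).

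\textbf{Step-by-step outline.} First, I would fix the (unknown) $\bm{b} \in \ell^p_{\mathsf{M}}(\bbN)$ and invoke the best-$s$-term polynomial approximation theory for $(\bm{b},\varepsilon)$-holomorphic functions (as in \cite{cohen2015high,Adcock2022}): there is an anisotropic lower set $S$ of polynomial indices, supported on the first $n = \lceil m/(c_0 L)\rceil$ variables, such that the tail error is controlled by $(m/L)^{\frac12(\frac12-\frac1p)}$. Crucially, because $\bm{b}$ is unknown, $S$ is not chosen a priori; instead one uses a \emph{fixed} large candidate index set $\Lambda$ (of cardinality polynomial in $m$, giving the $m^{3+\log_2 m}$ width) and lets compressed sensing select the important indices adaptively — this is the source of the $\frac12$ loss in the exponent versus the known-anisotropy rate $1/p-1/2$. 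Second, I would establish the restricted isometry / robust null space property for the random sampling matrix in the weighted $\ell^1$ setting over $\Lambda$, using the boundedness of Chebyshev/Legendre polynomials and a union bound; here $L = \log^4(m) + \log(\epsilon^{-1})$ is exactly the oversampling factor needed for the RIP to hold with probability $\ge 1-\epsilon$. The Banach-valued version requires running the scalar argument coordinate-wise against dual elements $v^* \in \cV^*$ via \eqref{norm_Vector}, or more precisely working with the Lebesgue-Bochner norms directly; this is where one picks up $\|\cdot\|_{\cV}$-valued coefficient vectors and the factor $m^{1/4}$ enters (from converting an $\ell^2$-type bound on residuals into a $\cV$-norm error bound without the parallelogram law). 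Third, I would emulate the (finite) polynomial basis $\{\Psi_{\bm\nu}\}_{\bm\nu \in \Lambda}$ by the appropriate networks: for tanh use the product-emulation of \cite{Ryck2021tanh} and \cite{opschoor2019exponential}; for RePU use exact polynomial representation; for ReLU use the approximate-multiplication binary-tree construction of \cite{opschoor2019exponential} / \cite{Schwab2017}, which produces the depth bound with the $m$ and $p^{-1}\log m$ terms. The regularization $\cJ$ is chosen so that $\lambda \cJ(\Phi)$ equals (up to equivalence) the weighted $\ell^1$-norm of the coefficient vector that $\Phi$ represents, making \eqref{trainingprob1} a reformulation of the square-root LASSO decoder; then the approximate-minimizer property \eqref{def:Eopt} translates verbatim into the $E_{\mathsf{opt}}$ term of \eqref{main_err_bd}. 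Finally, assemble: triangle inequality gives $\|f - f_{\hat\Phi,\Theta}\|_{L^2_\varrho(\cU;\cV)} \le \|f - \cP_K(f)\| + \|$polynomial truncation error$\| + \|$recovery error of coefficients$\|$, and the three pieces match $E_{\mathsf{disc}}$, $E_{\mathsf{app},\mathsf{UB}}$, and $m^{1/4}(E_{\mathsf{samp}} + E_{\mathsf{opt}})$ respectively, with the $\pi_K$ factor appearing because $\cP_K$ is only bounded by $\pi_K$, not a projection.

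\textbf{Main obstacle.} I expect the principal difficulty to be the \emph{Banach-valued compressed sensing} analysis in step (2): the standard RIP/robust-null-space machinery is built on Hilbert geometry (the $\ell^2$ loss is quadratic, residuals decompose orthogonally), and extending it to $\cV$-valued coefficients requires either a vector-valued version of the sparse recovery theorems or a reduction to the scalar case via duality \eqref{norm_Vector} that does not lose more than a $\sqrt{m}$ factor. Getting exactly $m^{1/4}$ (and not, say, $m^{1/2}$) in front of $E_{\mathsf{disc}} + E_{\mathsf{samp}} + E_{\mathsf{opt}}$ is delicate and will hinge on a careful interpolation-type argument between the $L^2_\varrho(\cU;\cV)$ error and the empirical $\ell^2$-in-$\cV$ residual. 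A secondary technical point is ensuring that the discretization error $E_{\mathsf{disc}}$ — measured in the uniform norm $L^\infty_\varrho(\cU;\cV)$ — propagates through the compressed sensing decoder without amplification beyond the $m^{1/4}$ factor, since the samples $d_i$ themselves live in $\cV$ (not $\cV_K$) and the mismatch between $f$ and $\cP_K(f)$ acts like an additional structured noise term.
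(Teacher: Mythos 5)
Your proposal follows essentially the same route as the paper: emulate a weighted $\ell^1$-minimization (square-root LASSO) decoder over the hyperbolic-cross candidate set $\Lambda^{\mathsf{HCI}}_n$ by DNNs (Theorem~\ref{Prop_Ex_NN}), establish a weighted rNSP for the random sampling matrix, lift it to $\cV$-valued vectors by duality (Lemma~\ref{l:from_R_to_V_rNSP}), bound the recovery error by Lemma~\ref{lemma-musuboptimal}, and insert the best-$k$-term decay rates from Theorem~\ref{thm:best_s-term_inf-dim}/Corollary~\ref{cor:anchored_case_lpM_space}. The decomposition, the role of the regularizer $\cJ$, the treatment of $E_{\mathsf{disc}}$ as structured noise, and the DNN-emulation sources you cite all match the paper.

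The one point I would correct is your diagnosis of where the $\tfrac12$ in the exponent and the $m^{1/4}$ prefactor come from. You attribute the halved exponent $\tfrac12(\tfrac12-\tfrac1p)$ to the \emph{unknown-anisotropy} penalty of searching over a large candidate set, and the $m^{1/4}$ to an ``interpolation-type argument'' between the $L^2_\varrho$ and empirical residuals. Neither is quite what happens. Comparing Theorem~\ref{t:mainthm1H}: in the Hilbert case the \emph{unknown}-anisotropy rate is the full $(m/L)^{1/2-1/p}$, identical to the known-anisotropy Hilbert rate, so unknown anisotropy alone costs nothing in the exponent. The halving is forced entirely by the Banach-valued rNSP lifting: Lemma~\ref{l:from_R_to_V_rNSP} multiplies both rNSP constants by $\sqrt{s^*}\le\sqrt{k}$, so to keep $\rho'=\sqrt{k}\rho<1$ one must take the RIP constant $\delta_{2k,\bm{u}}\sim k^{-1/2}$, which pushes the weighted-RIP sample complexity to $m\gtrsim k^2 L$, hence $k\sim\sqrt{m/L}$ (see \eqref{def_k}) rather than $k\sim m/L$; plugging this smaller $k$ into the rate $k^{1/2-1/p}$ gives $(m/L)^{\frac12(\frac12-\frac1p)}$. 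The $m^{1/4}$ is the same $\sqrt{k}\gamma$ factor propagating through the error constants in Lemma~\ref{lemma-musuboptimal}, not an interpolation argument. Your plan as written does not specify this non-obvious choice $k=\sqrt{m/(c_0 L)}$, and without it the $\sqrt{k}\rho<1$ constraint would fail, so this is the key missing quantitative step — though you would likely stumble onto it once you tried to make the rNSP constants work.
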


The previous theorem applies to general Banach spaces. However, in the Hilbert space case we are able to improve the error bound in several ways.

\begin{theorem}[Hilbert-valued learning; unknown anisotropy]\label{t:mainthm1H}
Consider the setup of Theorem \ref{t:mainthm1}, except where $0<p<1$ and $\cV$ is a Hilbert space. Then the same result holds, except with \eqref{main_err_bd} replaced by
\begin{equation}\label{main_err_bd_H}
\| f- f_{\hat{\Phi},\Theta}\|_{L^2_{\varrho}(\cU;\cV)} \leq c_1 \left(    E_{\mathsf{app},\mathsf{UH}}+E_{\mathsf{disc}} +E_{\mathsf{samp}}+ E_{\mathsf{opt}}\right ),
\end{equation}
$E_{\mathsf{app},\mathsf{UB}}$ replaced by  $E_{\mathsf{app},\mathsf{UH}} = C  \cdot \pi_K \cdot   \left ( {m}/{ L} \right )^{\frac12- \frac{1}{p}}$ and potentially different values of the constants $c_0$, $c_1$, $c_{j,1}$, $c_{j,2}$ and $C(\bm{b},\varepsilon,p)$.
\end{theorem}

This result is an extension of that shown in \cite{AdcockEtAl2021MSML}, which considered the finite-dimensional, Hilbert-valued case with the ReLU activation function only.

Note that Theorem \ref{t:mainthm1} holds for a fixed $0 < p \leq 1/2$ and Theorem \ref{t:mainthm1H} {holds} for a fixed $0 < p < 1$. However, this assumption is only needed for the ReLU case, where, as we see, the depth of the DNN architecture behaves likes $\ord{1/p}$ for small $p$. In the RePU and tanh cases, the depth of the architecture is independent of $p$. This means that the results in fact hold simultaneously for all $0 < p \leq 1/2$ and $0 < p < 1$, respectively, in these cases. Therefore, these activations fully address the unknown anisotropy case: the architectures and training procedures are completely independent of $\bm{b}$, with the assumption $\bm{b} \in \ell^p_{\mathsf{M}}(\bbN)$ being used only to assert a bound for the approximation error. ReLU activations lead to schemes that depend on $p$, but are otherwise independent of $\bm{b}$ as well.

Upon inspection of the proof of Theorem \ref{t:mainthm1} (see, e.g., \eqref{eq:def_p}), we notice that allowing $0<p\leq p^*$ for some $p^*<1$ (instead of $0 < p \leq 1/2$)  is possible if we enlarge the search space in \eqref{def_n} to  $n= \lceil(m/c_0 L)^{\frac{1}{2(1-p^*)}}\rceil$. This results in larger width and depth bounds. However, to keep these bounds as small as possible, we have abstained from doing so in the statement of Theorem \ref{t:mainthm1}.
 
\subsection{Learning in the case of known anisotropy}

The previous two theorems address the case of unknown anisotropy, since the  DNN architecture and training strategy do not require any knowledge of the anisotropy parameter $\bm{b} \in \ell^p_{\mathsf{M}}(\bbN)$ (and, as noted, in the RePU and tanh cases, the parameter $p$). We now consider the case of known anisotropy, in which we assume knowledge of $\bm{b}$ to design the architecture and training strategy.

\begin{theorem}[Banach-valued learning; known anisotropy]\label{t:mainthm2}
There are universal constants $c_0,c_1 \geq 1$ such that the following holds. Let $m\geq 3$, $0 < \epsilon < 1$, $0<p \leq 1/2$, $\varepsilon > 0$, $\bm{b} \in \ell^p(\bbN)$, $\varrho$ be either the uniform or Chebyshev probability measure over $\cU = [-1,1]^{\bbN}$, $\cV$ be a Banach space, $\cV_K \subseteq \cV$ be a subspace of dimension $K$, $\cP_K: \cV \rightarrow \cV_K$ be a bounded linear operator, $\pi_K$ be as in \R{ch-def} and 
 \be{
 \label{def_L-known}
 L = L(m,\epsilon)=\log(m) + \log(\epsilon^{-1}) .
 }
 Then there exist
 \bulls{
 \item[(a)] a set $\Theta \subset \bbN$ of size
 \be{
 \label{def_n_known}
 |\Theta| = n : = \left \lceil \frac{m}{c_0} \right \rceil ,
 }
 \item[(b)]  a class $\cN^{j}$ of neural networks $\Phi : \bbR^n \rightarrow \bbR^K$ with either ReLU ($j = 1$), RePU ($j = {\ell}$) or tanh ($j = 0$) activation function and bounds for its depth and with given by 
\begin{align*}\label{size_depth}
   \mathrm{width}(\cN^{1} ) \leq c_{1,1} \cdot      m^2 , \qquad
    \mathrm{depth}( \cN^{1}  )  \leq c_{1,1} \cdot    \log(m) \left ( p^{-1} \log(m)+ m\right ) {,}
\end{align*}
in the ReLU case and
  \begin{align*}
  \mathrm{width}(\cN^j  )  \leq c_{j,1}  \cdot m^{ 2},\qquad
  \mathrm{depth}(\cN^j  ) \leq c_{j,2}  \cdot  \log_2(m ),
\end{align*}
in the tanh ($j = 0$) or RePU ($j =  {\ell}$) cases, where $c_{j,1}$, $c_{j,2}$ are universal constants in the ReLU and tanh  cases and $c_{j,1},c_{j,2}$ depend on $\ell = 2,3,\ldots$ in the RePU case;   
}
such that the following holds. Let $f \in \cH(\bm{b},\varepsilon)$, where $\cH(\bm{b},\varepsilon)$ is as in \eqref{B-b-eps-def}, draw $\y_1,\ldots, \y_m \sim_{\mathrm{i.i.d.}} \varrho$ and consider noisy evaluations $d_i = f(\bm{y}_i) + n_i \in \cV$, $i = 1,\ldots,m$,  as in \eqref{samples_of_f}. Then, with probability at least $1- \epsilon$, every $E_{\mathsf{opt}}$-approximate minimizer $\hat{\Phi}$, $E_{\mathsf{opt}} \geq 0$, of the training problem
\begin{equation}\label{trainingprob_NN}
 \min_{\Phi \in {\cN^j}}\cG(\Phi),\qquad \text{where } \cG(\Phi) = \sqrt{ \frac1m \sum^{m}_{i=1} \nm{f_{\Phi,\Theta}(\bm{y}_i) - d_i }^2_{\cV} },
\end{equation}
satisfies  
\be{
\label{main_err_bd_2}
\| f- f_{\hat{\Phi},\Theta} \|_{L^2_{\varrho}(\cU;\cV)} \leq c_1 \Big( E_{\mathsf{app},\mathsf{KB}} +  {m} \left( E_{\mathsf{disc}} +E_{\mathsf{samp}}+ E_{\mathsf{opt}} ,\right ) \Big),
}
where  $f_{\hat{\Phi},\Theta}$ is as in \eqref{fPhi_DNN},
\be{
\label{E123_def_2}
E_{\mathsf{app},\mathsf{KB}}= C \cdot \pi_K  \cdot   \left ( \frac{m}{ L} \right )^{{2- \frac1p}},
\quad
E_{\mathsf{samp}}= \sqrt{\frac1m \sum^{m}_{i=1} \nm{n_i}^2_{\cV} } ,
\quad
E_{\mathsf{disc}} = \nm{f - \cP_K(f)}_{L^{\infty}_{\varrho}(\cU ; \cV)}{,}
}
and $C = C(\bm{b},\varepsilon,p)$ depends on $\bm{b}$, $\varepsilon$ and $p$ only.  Moreover, if $\bm{b} \in \ell^p_{\mathsf{M}}(\bbN)$, then the set $\Theta$ in (a) may be chosen explicitly as
\be{
\label{Theta-n-choice-monotone}
\Theta = [n],\qquad \text{where }n = \left \lceil \frac{m}{c_0 L} \right \rceil.
}
\end{theorem}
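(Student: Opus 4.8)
\textbf{Proof proposal for Theorem \ref{t:mainthm2}.} The plan is to establish the known-anisotropy result by emulating a polynomial least-squares procedure with a DNN, in three conceptual stages: (i) reduce the infinite-dimensional problem to a finite-dimensional polynomial approximation problem on a well-chosen index set $S$; (ii) show that weighted least squares on the polynomial subspace $\mathrm{span}\{\Psi_{\bm{\nu}}\}_{\bm{\nu}\in S}$, with the $m$ random samples, is stable and quasi-optimal with high probability; and (iii) replace the polynomials by DNNs via emulation, tracking how this perturbs the loss function so that an $E_{\mathsf{opt}}$-approximate minimizer of the DNN training problem \eqref{trainingprob_NN} maps back to a near-minimizer of the polynomial least-squares problem. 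First I would invoke the known-anisotropy structure: since $\bm{b}\in\ell^p(\bbN)$ is known, the standard best $n$-term / lower-set machinery for $(\bm{b},\varepsilon)$-holomorphic functions (cf. \cite{chkifa2015breaking,Adcock2022}) provides an explicit anisotropic lower set $S = S(\bm{b},\varepsilon,m)$ with $|S| \asymp m/L$ such that the truncated Chebyshev (or Legendre) expansion $f_S = \sum_{\bm{\nu}\in S} c_{\bm{\nu}}\Psi_{\bm{\nu}}$ satisfies $\|f - f_S\|_{L^2_\varrho(\cU;\cV)} \lesssim C(\bm{b},\varepsilon,p)\,(m/L)^{1-1/p}$, which is exactly $E_{\mathsf{app},\mathsf{KB}}$ up to the $\pi_K$ factor coming from discretizing $\cV$ by $\cV_K$ via $\cP_K$. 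The set $S$ also determines the active variables, hence $\Theta$ and $n$ in part (a); in the monotone case $\bm{b}\in\ell^p_{\mathsf{M}}(\bbN)$ one can take the hyperbolic-cross-type set supported on $[n]$, giving \eqref{Theta-n-choice-monotone}.

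For stage (ii) I would appeal to the Banach-valued compressed-sensing / least-squares results developed in the preceding sections of the paper (referenced in \S\ref{ss:vector_recovery}), which are the Banach-valued analogues of the Hilbert-valued theory in \cite{AdcockEtAl2021MSML}. Concretely: with the sample count $m \gtrsim |S|\cdot L$ and the chosen measure $\varrho$, the (bounded Riesz) system $\{\Psi_{\bm{\nu}}\}_{\bm{\nu}\in S}$ satisfies, with probability $\geq 1-\epsilon$, a norm-equivalence between the continuous norm $\|\cdot\|_{L^2_\varrho(\cU;\cV)}$ and the discrete norm $(\tfrac1m\sum_i \|\cdot(\bm{y}_i)\|_\cV^2)^{1/2}$ on the polynomial subspace. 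This equivalence, combined with the triangle inequality and the tail bound on $f - f_S$, yields that any near-minimizer $g$ of the discrete $\ell^2$-loss over the polynomial subspace obeys $\|f - g\|_{L^2_\varrho(\cU;\cV)} \lesssim E_{\mathsf{app},\mathsf{KB}} + \sqrt{m}\,(E_{\mathsf{disc}} + E_{\mathsf{samp}} + E_{\mathsf{opt}})$; the $\sqrt{m}$ factor in front of the non-approximation errors is precisely the price of the one-sided nature of the discrete-to-continuous bound in the Banach (rather than Hilbert) case, where no Parseval identity is available — this is why $\theta = 1/4$ becomes $\theta = 1/2$ relative to the Hilbert setting, but here it enters as $\sqrt m$ because known anisotropy uses plain least squares rather than a weighted-$\ell^1$ program. (A short check is needed that the union bound over $S$, combined with $L = \log(m)+\log(\epsilon^{-1})$, gives the stated probability; this is routine given the matrix Chernoff estimates.)

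For stage (iii), I would use the DNN emulation results cited in the excerpt — \cite{opschoor2019exponential} and \cite{Schwab2017} for RePU, \cite{Ryck2021tanh} and \cite{Li2019} for tanh, and the ReLU construction of products/polynomials as in \cite{AdcockEtAl2021MSML,opschoor2019exponential} — to build, for each $\bm{\nu}\in S$, a network emulating $\Psi_{\bm{\nu}}\circ\cT_\Theta$ on $[-1,1]^n$ to accuracy $\delta$ in $L^\infty$, then assemble these in parallel and linearly combine them (with the coefficients $c_{\bm{\nu}}$ absorbed into the last affine layer) to get a network $\Phi_S$ with $f_{\Phi_S,\Theta}$ approximating $f_S$ uniformly. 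The depth/width bookkeeping gives $\mathrm{depth} \lesssim \log(m)(p^{-1}\log m + m)$ for ReLU (the $m$ coming from the maximal polynomial degree on the anisotropic set, the $p^{-1}\log m$ from the accuracy $\delta \asymp m^{-c/p}$ needed so emulation error is dominated by $E_{\mathsf{app}}$) and $\mathrm{depth}\lesssim\log_2 m$ for RePU/tanh (exact or super-polynomially-cheap polynomial emulation), with $\mathrm{width}\lesssim m^2$ in both cases since $|S|\lesssim m$ and each sub-network has width $\lesssim m$. Taking $\cN^j$ to be this architecture (with free weights only in the final affine map, or with a regularizer if needed to control the $c_{\bm{\nu}}$ — though in the known case the unregularized loss \eqref{trainingprob_NN} suffices because $S$ is fixed), one checks that $\cG(\Phi_S) \leq \cG_{\mathrm{poly}}(f_S) + (\text{emulation error})$, so an $E_{\mathsf{opt}}$-approximate minimizer $\hat\Phi$ satisfies $\cG(\hat\Phi) \le E_{\mathsf{opt}} + \cG_{\mathrm{poly}}(f_S) + \delta$, and feeding this into the stage-(ii) bound closes the argument.

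\textbf{Main obstacle.} I expect the delicate step to be stage (ii) in the genuinely Banach-valued setting: establishing the discrete–continuous norm equivalence (or its one-sided version) for $\cV$-valued polynomials without the inner-product structure, and doing so with the sharp logarithmic sample complexity $m \gtrsim |S|(\log m + \log\epsilon^{-1})$ rather than a cruder polynomial-in-$|S|$ bound. The usual proof route — matrix Chernoff for the Gram matrix — does not directly apply because there is no scalar Gram matrix; one must instead work with the duality characterization \eqref{norm_Vector} to reduce $\cV$-valued quantities to suprema of scalar-valued ones, control this supremum uniformly (this is where a factor is inevitably lost, producing the $\sqrt m$), and combine with a scalar concentration inequality applied coordinatewise in the polynomial basis. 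Getting the constants to depend only on $(\bm{b},\varepsilon,p)$ and not on $K$ or the dimension — i.e., genuinely dimension-free — is the crux, and it is presumably the content of the Banach-valued recovery theorem proved in \S\ref{ss:vector_recovery} that this proof will cite.
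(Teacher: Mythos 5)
Your proposal follows essentially the same route as the paper: reduce to polynomial least squares on a finite index set $S$ chosen by the known-anisotropy best $n$-term machinery, establish stability of the random sampling via a Chernoff-type argument, lift scalar stability to the Banach-valued setting via the duality characterization \eqref{norm_Vector}, then emulate the orthonormal polynomials by DNNs and account for the emulation error as a perturbation of the loss. The paper packages the stability step in the weighted rNSP language (Lemma~\ref{LS-wrNSP}) rather than as a direct Marcinkiewicz--Zygmund inequality, but when $S$ is the full index set these are the same thing, as the paper itself notes.

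Two small imprecisions worth flagging. First, the $\sqrt{m}$ factor is not a consequence of the stability bound being ``one-sided'' per se --- the scalar stability $\|p\|_{L^2_\varrho}\lesssim\|p\|_{\mathsf{disc}}$ is established by the usual matrix Chernoff and carries the usual $O(1)$ constant; the $\sqrt{m}$ factor arises from the \emph{lifting} of the rNSP from $\mathbb{R}$ to $\cV$ (Lemma~\ref{l:from_R_to_V_rNSP}), where the duality/Cauchy--Schwarz step $\sum_{i\in S}\|v_i\|_\cV^2 \leq |S|\max_{\|v^*\|=1}\|v^*(\bm{v}_S)\|_2^2$ necessarily picks up $\sqrt{|S|}\approx\sqrt{m/L}$. (In the Hilbert case, [Lem.~7.5, Adcockmems2022] lifts the wRIP with no loss, which is exactly why Theorem~\ref{t:mainthm2H} has no $\sqrt{m}$.) Second, the rate $(m/L)^{1-1/p}$ in $E_{\mathsf{app},\mathsf{KB}}$ is not the $L^2$ tail rate of $f-f_S$ (which would be $(m/L)^{1/2-1/p}$); in the paper's decomposition it arises as the sum $E_{S,\infty}(f) + \sqrt{k}\,E_{S,2}(f)$, each contributing $\bar k^{1-1/p}$, and the set $S$ is not a single best $n$-term set but a union $(S_1\cup S_2)\cap\Lambda$ intersected with the hyperbolic cross to control both $q=1$ and $q=2$ tails and to ensure $S$ is contained in a lower/anchored set so that $\Theta$ and the polynomial degrees can be bounded. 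These refinements are needed to make your stage (i) and your $\Theta$ construction in the $\ell^p_{\mathsf{M}}$ case go through rigorously.
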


\begin{theorem}[Hilbert-valued learning; known anisotropy]\label{t:mainthm2H}
Consider the same setup as Theorem \ref{t:mainthm2}, except where $0<p<1$ and $\cV$ is a Hilbert space. Then the same result holds, except \eqref{main_err_bd_2} replaced by 
\begin{equation}\label{main_err_bd_2H}
\| f- f_{\hat{\Phi},\Theta}\|_{L^2_{\varrho}(\cU;\cV)} \leq c_1 \left(    E_{\mathsf{app},\mathsf{KH}}+E_{\mathsf{disc}} +E_{\mathsf{samp}}+ E_{\mathsf{opt}}\right ),
\end{equation}
$E_{\mathsf{app},\mathsf{KB}}$ replaced by  $E_{\mathsf{app},\mathsf{KH}}= C  \cdot \pi_K \cdot  \left ( \frac{m}{ L} \right )^{ \frac12- \frac1p}$ and potentially different values of the constants $c_0$, $c_1$, $c_{j,1}$, $c_{j,2}$ and $C(\bm{b},\varepsilon,p)$.
\end{theorem}

\subsection{Discussion}

These results demonstrate the existence of sample-efficient training procedures for approximating infinite-dimensional, holomorphic functions taking values in Hilbert or Banach spaces using DNNs. As discussed in \S \ref{s:contributions}, they account for all main sources of error in the problem through the approximation errors $E_{\mathsf{app},\cdot}$, the physical discretization error $E_{\mathsf{disc}}$, the sampling error $E_{\mathsf{samp}}$ and the optimization error $E_{\mathsf{opt}}$. Note that the second error $E_{\mathsf{disc}}$ is given in terms of the linear operator $\cP_K(f)$.  However, we reiterate that this operator is only used to provide a bound for $E_{\mathsf{disc}}$. It is not used in the training procedure itself.

The various approximation errors warrant further discussion. Observe that these take the form
\begin{align*}
E_{\textsf{app},\mathsf{UB}} &= C  \cdot \pi_K  \cdot \left ( \frac{m}{ L} \right )^{  {\frac12(1- \frac{1}{p})}}, \qquad  
&E_{\textsf{app},\mathsf{KB}} = C  \cdot \pi_K \cdot \left ( \frac{m}{L} \right )^{ { 2- \frac{1}{p}}}, \\
E_{\textsf{app},\mathsf{UH}} &= C  \cdot \pi_K  \cdot \left ( \frac{m}{ L} \right )^{  \frac12- \frac{1}{p}}, \qquad
&E_{\textsf{app},\mathsf{KH}} = C  \cdot \pi_K \cdot  \left ( \frac{m}{L} \right )^{  \frac12- \frac{1}{p}}, 
\end{align*}
where $L$ is roughly $\log(m)$ in the known anisotropy case and $\log^4(m)$ in the unknown anisotropy case. This discrepancy arises because of the proof strategy. In the known anisotropy setting we emulate a polynomial least-squares scheme via a DNN, whereas in the unknown anisotropy setting we emulate a polynomial (weighted) $\ell^1$-minimization scheme, with significantly more intricate analysis via compressed sensing techniques.

In all cases, we overcome the curse of dimensionality in the sample complexity. Note that the term $E_{\textsf{app},\mathsf{UH}}$ is the same as that shown in \cite{adcock2022efficient} for polynomial approximation via compressed sensing of Hilbert-valued functions. Further, both the terms $E_{\textsf{app},\mathsf{KH}}$ and $E_{\textsf{app},\mathsf{UH}}$ are optimal up to constants and the logarithmic factors \cite{Adcock2022learning}. By contrast, the bounds for Banach-valued functions are worse in both cases. This arises as a consequence of just having the duality pair and not the proper inner product -- see the discussion after  Lemma \ref{l:from_R_to_V_rNSP}. We expect it may be possible to improve these rates via a different, and more involved, argument.

Another key difference between the known and unknown anisotropy cases is the width of the DNN architecture. For the RePU and tanh activation functions, it is polynomial in $m$ in the former case; specifically, $\ordu{m^2}$ for large $m$. In the latter case, it behaves like $\ordu{m^{3+\log_2(m)}}$, i.e., faster than polynomial, but still subexponential in $m$. This discrepancy arises from having to include many more coordinate variables in the unknown anisotropy case to guarantee the desired approximation error bound, whereas in the known anisotropy case we may restrict only to those variables that are known to be important.

The unknown anisotropy case also involves the stronger assumption $\bm{b}\in \ell^p_{\mathsf{M}}(\bbN)$. This assumption means that the variables are, on average, ordered in terms of importance. In fact, it is impossible in the unknown anisotropy setting to learn functions with only the assumption $\bm{b}\in \ell^p(\bbN)$ \cite{Adcock2022learning}. By contrast, in the known anisotropy case we may in fact assume that $\bm{b}\in \ell^p(\bbN)$. Yet we do not have any control over the set $\Theta$ that defines the truncation operator in this case,  except for its size. However, if we suppose that $\bm{b}\in \ell^p_{\mathsf{M}}(\bbN)$ then we may choose $\Theta$ explicitly as in \eqref{Theta-n-choice-monotone}.

\section{Formulating the training problems}\label{ss:vector_recovery}

The remainder of this paper is devoted to the proofs of the main theorems. This follows a similar approach to that of \cite{AdcockEtAl2021MSML}, but with significant extensions to handle the infinite-dimensional and Banach-valued settings considered in this paper. As mentioned, we first formulate learning problems for Banach-valued functions using orthogonal polynomials, and then use DNNs to emulate these problems. The main theorems are then obtained using techniques from compressed sensing theory.  

Specifically, we proceed as follows. In this section, we introduce orthogonal polynomials and reformulate the problem as a recovery problem for Banach-valued vectors. We then introduce the class of DNNs considered and formulate separate learning problems in the known and unknown anisotropy cases. Next, in \S \ref{ss:HilbertCS} we introduce extensions of various compressed sensing concepts from the scalar- or Hilbert-valued setting to the Banach-valued setting. Then in \S \ref{sec:DNN-approx} we describe how to emulate polynomials using DNNs, and give bounds for the width and depths of the corresponding architectures. After that, in \S \ref{s:inf-dim} we describe various polynomial approximation error bounds for infinite-dimensional, holomorphic, Banach-valued functions. These bounds will be used to obtain the desired algebraic rates of convergence. Finally, with the necessary tools in place, in \S \ref{S:proofs} we give the proofs of the main results.

\subsection{Orthogonal polynomial expansions of Banach-valued functions}

{We first require some notation. L}et $\Lambda \subset \cF$ be a multi-index set and consider $\cV$-valued sequences indexed by $\Lambda$, i.e., $\bm{v} = (v_{\bnu})_{\bnu \in \Lambda}$. Now let $\bm{w} = (w_{\bm{\nu}})_{\bm{\nu} \in \Lambda} \in \bbR^{|\Lambda|}$, with $\bm{w} > \bm{0}$ (recall that $\bm{0}$ denotes the vector of zeroes and the inequality $\bm{w} > \bm{0}$ is understood componentwise) be a vector of positive weights. Then, for $0<p<2$ we define the weighted $\ell_w^p(\Lambda;\cV)$ space as the set of $\bm{v} = (v_{\bm{\nu}})_{\bm{\nu} \in \Lambda} \in \cV^{|\Lambda|}$ for which  
\bes{
\nmu{\bm{v}}_{p,\bm{w};\cV} : = \left ( \sum_{\bnu \in \Lambda} w^{2-p}_{\bnu} \nm{v_{\bnu}}^p_{\cV} \right )^{1/p} < \infty.
}
Notice that $\ell^2_{\bm{w}}(\Lambda ; \cV)$ coincides with the unweighted space $\ell^2(\Lambda ; \cV)$, where its norm $\nm{\cdot}_{p;\cV}$ corresponds to the case $\bm{w}= \bm{1}$. 

Let $\varrho$ be either the uniform or Chebyshev measure \R{meas-unif} on $[-1,1]$ and $\{ \Psi_{\nu} \}_{\nu \in \bbN_0} \subset L^2_{\varrho}([-1,1])$ be the corresponding orthonormal basis of $L^2_{\varrho}([-1,1])$. Now let $\varrho$ be the tensor-product measure \R{measure_infty} on $\cU = [-1,1]^{\bbN}$. Then the set of functions $\{ \Psi_{\bm{\nu}} \}_{\bm{\nu} \in \cF} \subset L^2_{\varrho}(\cU)$ given by
\be{
\label{Psi-product}
\Psi_{\bm{\nu}}(\bm{y}) = \prod_{k \in \bbN} \Psi_{\nu_k}(y_k),\quad \bm{y} \in \cU,\ \bm{\nu} \in \cF,
}
is an orthonormal basis of $L^2_{\varrho}(\cU)$. We note in passing that \R{Psi-product} is well-defined since any $\bm{\nu} \in \cF$ has only finitely-many nonzero terms. {Now consider a function $f \in L^2_{\varrho}(\cU ; \cV)$ and define the expansion}
\be{
\label{f-exp}
{\sum_{\bm{\nu} \in \cF} c_{\bm{\nu}} \Psi_{\bm{\nu}}},\qquad \text{where }c_{\bm{\nu}} = \int_{\cU} f(\y) \Psi_{\bm{\nu}}(\bm{y}) \D \varrho(\bm{y}) \in \cV.
}
Notice that the \textit{coefficients} $c_{\bm{\nu}}$ are elements of the Banach space $\cV$. {We require conditions on $f$ under which this expansion converges in a suitable sense. To present these, we recall several concepts from \cite[Sec.\ 3.1]{cohen2015high}. First, a sequence $(\Lambda_n)_{n \in \bbN}$ of finite subsets of $\cF$ is termed an \textit{exhaustion} if, for any $\bnu \in \cF$, there is an $n_0$ such that $\bnu \in \Lambda_n$ for all $n \geq n_0$. Second, the series \R{f-exp} converges \textit{unconditionally} to $f$ in a norm $\nm{\cdot}$ if $\lim_{n \rightarrow \infty} \nm{f - \sum_{\bm{\nu} \in \Lambda_n} c_{\bm{\nu}} \Psi_{\bm{\nu}} } = 0$ for every exhaustion $(\Lambda_n)_{n \in \bbN}$.}

{When $\cV$ is a Hilbert space, the expansion \R{f-exp} converges unconditionally to $f$ in the $L^2_{\varrho}(\cU ; \cV)$-norm, whenever $f \in L^2_{\varrho}(\cU ; \cV)$ \cite[Thm.\ 3.3]{cohen2015high}. We now give a result on convergence in the $L^{\infty}(\cU ; \cV)$-norm, which also holds when $\cV$ is a Banach space.}

{
\begin{lemma}
Suppose that $f \in L^2_\varrho(\cU;\cV)$ and its coefficients $\bm{c} = (c_{\bm{\nu}})_{\bnu \in \cF} $ in \eqref{f-exp} satisfy $\bm{c} \in \ell^1_{\bm{u}}(\cF;\cV)$ for weights $(u_{\bnu})_{\bnu \in \cF}=(\|\Psi_{\bnu}\|_{L^{\infty}(\cU)})_{\bnu \in \cF}$. Then the series
\begin{equation*}
\sum_{\bnu \in \cF} c_{\bnu} \Psi_{\bnu}
\end{equation*}
converges unconditionally to $f$ in $L^{\infty}(\cU;\cV)$ and for any $\Lambda \subseteq \cF$ we have the error bound
\begin{equation}\label{eq:truncweight}
\|f-f_{\Lambda}\|_{L^{\infty}(\cU;\cV)} \leq \sum_{\bnu \not\in \Lambda} u_{\bnu} \|c_{\bnu}\|_{\cV},\quad \text{where }f_{\Lambda} = \sum_{\bnu \in \Lambda} c_{\bnu} \Psi_{\bnu}.
\end{equation}
\end{lemma}
\begin{proof}
Let $\cV^*$ be the continuous dual of $\cV$ and consider an arbitrary $v^* \in \cV^*$ with unit norm. Then $v^* \circ f \in L^2_{\varrho}(\cU)$ (since $f \in L^2_{\varrho}(\cU ; \cV)$) and its coefficients are given by
\begin{equation*}
d^*_{\bnu} = \int_{\cU} (v^* \circ f)(\y) \Psi_{\bm{\nu}}(\bm{y}) \D \varrho(\bm{y}) = v^* \left ( \int f(\bm{y}) \Psi_{\bnu}(\y) \D \varrho(\y) \right ) =  v^*(c_{\bnu}).
\end{equation*}
Here we  used \cite[eqn.\ (1.2)]{hytonen2016analysis}. Since $\bm{c} \in \ell^1_{\bm{u}}(\cF ; \cV)$, we also have that $\bm{d}^* = (d_{\bm{\nu}})_{\bnu \in \cF} \in \ell_{\bm{u}}^1(\cF)$. Therefore, \cite[Thm.\ 3.5]{cohen2015high} implies that the series
\begin{equation}
\label{d-series}
\sum_{\bnu \in \cF} d_{\bnu}^* \Psi_{\bnu}
\end{equation}
converges unconditionally to $v^* \circ f$ in $L^{\infty}(\cU)$.  Now let $(\Lambda_{n})_{n \in \bbN}$ be any exhaustion of $\cF$ and consider $f_n = \sum_{\bnu \in \Lambda_n} c_{\bnu} \Psi_{\bnu}$. Then
\begin{align*}
\lim_{n \rightarrow \infty} \nm{f - f_n}_{L^{\infty}(\cU ; \cV)} &= \lim_{n \rightarrow \infty} \esssup_{\bm{y} \in \cU} \sup_{v^* : \nm{v^*}_{\cV^*} = 1} \left | v^* \circ f(\y)  - v^* \circ f_n(\y) \right | 
\\
& = \lim_{n \rightarrow \infty} \esssup_{\bm{y} \in \cU} \sup_{v^* : \nm{v^*}_{\cV^*} = 1} \left | v^* \circ f(\y)  - \sum_{\bnu \in \Lambda_n} d^*_{\bnu} \Psi_{\bnu}(\y) \right |
\\
& = \lim_{n \rightarrow \infty} \sup_{v^* : \nm{v^*}_{\cV^*} = 1}\esssup_{\bm{y} \in \cU}  \left | v^* \circ f(\y)  - \sum_{\bnu \in \Lambda_n} d^*_{\bnu} \Psi_{\bnu}(\bm{y}) \right | 
\\
& = \lim_{n \rightarrow \infty} \sup_{v^* : \nm{v^*}_{\cV^*} = 1} \nm{v^* \circ f - \sum_{\bnu \in \Lambda_n} d^*_{\bnu} \Psi_{\bnu}}_{L^{\infty}(\cU)} = 0,
\end{align*}
Here, in the last equality. we used unconditional convergence of the series \R{d-series}. This gives the first result. For the error bound, we use this, the choice of weights and the triangle inequality to get
\begin{equation*}
\|f-f_{\Lambda}\|_{L^{\infty}(\cU;\cV)} \leq \sum_{\bnu \not\in \Lambda} \|c_{\bnu} \Psi_{\bnu} \|_{L^{\infty}(\cU;\cV)}
\leq \sum_{\bnu \not\in \Lambda} \|c_{\bnu}\|_{\cV} \|\Psi_{\bnu} \|_{L^{\infty}(\cU)} =  \sum_{\bnu \not\in \Lambda} {u}_{\bnu}\|c_{\bnu}\|_{\cV},
\end{equation*}
as required.
\end{proof}
Note that the assumptions of this lemma always hold in the setting of this paper. Indeed, if $f \in \cH(\bm{b},\varepsilon)$, where $\cH(\bm{b},\varepsilon)$ is as in \R{B-b-eps-def}, then its coefficients $\bm{c} \in \ell^1_{\bm{u}}(\cF ; \cV)$ (see, e.g., \cite[Lem.\ 7.23]{Adcock2022}).
}

 \subsection{Formulation as a vector recovery problem}\label{S:formulation_recovery}

{With this in hand, let} $\Lambda \subset \cF$ be a finite multi-index set of size $|\Lambda | = N$ with the ordering $\Lambda = \{ \bm{\nu}_1,\ldots,\bm{\nu}_N \}$. 
{Let $f \in L^2_{\varrho}(\cU ; \cV)$ with coefficients $\bm{c} \in \ell^1_{\bm{u}}(\cF ; \cV)$ as in \R{f-exp}. We now define the truncated expansion of $f$ based on the index set $\Lambda$ and its corresponding vector of coefficients as
\be{
\label{f_exp_trunc}
f_{\Lambda} = \sum_{\bm{\nu} \in \Lambda} c_{\bm{\nu}} \Psi_{\bm{\nu}},
 \quad
\bm{c}_{\Lambda} = (c_{\bm{\nu}_j})^{N}_{j=1} \in \cV^N.
}
}
Let $m \in  \bbN$ and $\bm{y}_1,\ldots,\bm{y}_m \in \cU$ be the sample points.
We now define the normalized measurement matrix and the measurement and error vectors by 
\begin{equation}\label{def-measMatrix}
\bm{A} =\left(\frac{\Psi_{\bnu_j} (\y_i)}{\sqrt{m}} \right)^{m,N}_{i,j=1} \in \bbR^{m \times N}, \quad \bm{f} = \frac{1}{\sqrt{m}} \left ( f(\bm{y}_i) + n_i \right )^{m}_{i=1} \in \cV^m \quad \text{and}\quad\bm{e} = \frac{1}{\sqrt{m}} (n_i)^{m}_{i=1} \in \cV^m.
\end{equation} 
Notice that the matrix $\bm{A} = (a_{ij})^{m,N}_{i,j=1}$  immediately extends to a bounded linear operator $\bm{A}:\cV^N \rightarrow \cV^m$. Specifically,  $\bm{A} \in \cB(\cV^N,\cV^m)$ is   given by 
\be{
\label{def-inducedBLO}
\bm{x} = (x_i)^{N}_{i=1} \in \cV^N \mapsto \bm{A} \bm{x} = \left ( \sum^{N}_{j=1} a_{ij} x_j \right )^{m}_{i=1} \in \cV^m.
}
For ease of notation, we make no distinction henceforth between a matrix $\bm{A} \in \bbR^{m \times N}$ and the corresponding linear operator in $ \cB(\cV^N,\cV^m)$ (or $\cB(\cV^N_K,\cV^m_K)$). With this in hand, notice that
\bes{
\bm{A} \bm{c}_{\Lambda} = \frac{1}{\sqrt{m}} \left ( f_{\Lambda}(\bm{y}_i) \right )^{m}_{i=1} =  \frac{1}{\sqrt{m}} (f(\bm{y}_i))^{m}_{i=1} - \frac{1}{\sqrt{m}} \left ( f(\bm{y}_i) - f_{\Lambda}(\bm{y}_i) \right )^{m}_{i=1},
}
and therefore
\be{
\label{linsys_for_cLambda}
\bm{A} \bm{c}_{\Lambda} + \bm{e} + \widetilde{\bm{e}} = \bm{f}, \quad \text{ where} \quad  \widetilde{\bm{e}}= \frac{1}{\sqrt{m}}  \left ( f(\bm{y}_i) - f_{\Lambda}(\bm{y}_i) \right )^{m}_{i=1}.
}
Hence, the vector $\bm{c}_{\Lambda}$ of unknown coefficients is a solution of the noisy linear system \R{linsys_for_cLambda}.

We now define the class of DNNs $\cN$. 
To this end, fix $\Theta \subset \bbN$, $|\Theta|=n$ and let  $ \Phi_{\bnu,\delta, \Theta} =   \Phi_{\bnu,\delta} \circ \cT_{\Theta}$ be a DNN approximation to the basis function $\Psi_{\bm{\nu}}$ for $\bm{\nu} \in \Lambda$, where $\Phi_{\bm{\nu},\delta} : \bbR^n \rightarrow \bbR$ is a DNN of the form \eqref{Phi_NN_layers} and $\cT_{\Theta}$ is as in \eqref{def:cT}. The term $\delta>0$ is a parameter that controls the accuracy of the approximation $\Phi_{\bm{\nu},\delta,\Theta} \approx \Psi_{\bm{\nu}}$. These definitions will be useful later in \S \ref{S:proofs}.  Now let 
\bes{
\Phi_{\Lambda,\delta} : \bbR^n \rightarrow \bbR^N, \bar{\y}=(\bar{y}_j)_{j \in \Theta} \mapsto (\Phi_{\bm{\nu},\delta}(\bar{\y}) )_{\bm{\nu} \in \Lambda},\qquad \Phi_{\Lambda,\delta,\Theta} = \Phi_{\Lambda,\delta} \circ \cT_{\Theta},
}
and define the class of DNNs $\cN$ by
\begin{equation}\label{class_DNN}
\cN = \left \{ \Phi : \bbR^n \rightarrow \bbR^K : \Phi(\bar{\bm{y}}) = \bm{Z}^{\top} \Phi_{\Lambda,\delta}(\bar{\bm{y}}),\ \bm{Z} \in \bbR^{N\times K}, 
\bar{\y}=(\bar{y}_j)_{j \in \Theta} \in \bbR^n \right \},
\end{equation}
where  $\bm{Z} \in \bbR^{N \times K}$ is the matrix of trainable parameters. We also define the approximate measurement matrix $\bm{A}' \approx \bm{A}$ by
 \begin{equation}\label{def_A_NN_general}
\bm{A}' = \left(\frac{\Phi_{\bnu_j,\delta, \Theta} (\y_i)}{\sqrt{m}} \right)^{m,N}_{i,j=1} \in \bbR^{m \times N}.
\end{equation}

\subsection{Unknown anisotropy recovery}\label{S:Unk_recov}
We now consider the unknown anisotropy setting of \S \ref{S:unknown_theorems}. Due to the
discussion in \cite[\S 2.5-2.7]{adcock2022efficient} we expect the vector $\bm{c}_{\Lambda}   \in \ell^{p}(\Lambda;\cV)$ to be well-approximated by a (weighted) sparse vector.
Hence, to exploit this structure, we use weighted $\ell^1$-minimization techniques.  Following\cite{adcock2018infinite,chkifa2018polynomial,adcock2018compressed} (see also \cite[Rem.\ 2.14]{Adcock2022}), we let $\bm{w}= \bm{u}\geq \bm{1}$  be the so-called \textit{intrinsic weights}, given by
\be{
\label{weights_def}
 \bm{u} = (u_{\bm{\nu}})_{\bm{\nu} \in \Lambda},\quad \text{where }u_{\bm{\nu}} = \nm{\Psi_{\bm{\nu}}}_{L^{\infty}_{\varrho}(\cU)}, \ \forall\bnu \in \Lambda. 
}
In particular, for Chebyshev and Legendre polynomials these are given explicitly by
\begin{equation}\label{def:int_weights}
u_{\bm{\nu}} =  \nm{\Psi_{\bm{\nu}}}_{L^{\infty}_{\varrho}(\cU)} = \begin{cases} 
      \prod^{d}_{j=1} \sqrt{2 \nu_j +1}  , & \mathrm{Legendre} \\
      2^{\nm{\bnu}_0/2} ,& \mathrm{Chebyshev} \\
   \end{cases}
\end{equation}
where $\nmu{\bm{\nu}}_0$ is as in \eqref{l0norm}.  Recall that $\{ \bm{\nu}_1,\ldots,\bm{\nu}_N \}$  is an ordering of $\Lambda$. In what follows, we often write $u_{i}$ instead of $u_{\bm{\nu}_i}$.

As discussed in  \cite{AdcockEtAl2021MSML,Adcock2022}, choosing an appropriate index set is a vital step towards obtaining the desired approximation rates in \S \ref{s:mainres}. Following \cite{AdcockEtAl2021MSML,adcock2022efficient}, we first recall the following definition:

\defn{
\label{d:lower-anchored-set}
A set  $\Lambda \subseteq \cF$ is \textit{lower} if the following holds for every $\bm{\nu},\bm{\mu} \in \cF$:
\begin{equation}
\label{lowet_set_cond}
(\bm{\nu} \in \Lambda \text{ and } \bm{\mu} \leq \bm{\nu} )\Longrightarrow \bm{\mu} \in \Lambda.
\end{equation}
A set $\Lambda \subseteq \cF$ is \textit{anchored} if it is lower and if the following holds for every $j \in \mathbb{N}$:
\begin{equation*}
\bm{e}_j \in \Lambda \Longrightarrow 
\{\bm{e}_1,\bm{e}_2, \ldots, \bm{e}_{j} \}\subseteq \Lambda.
\end{equation*}
}

See, e.g., \cite{Adcock2022,cohen2015high,chkifa2015discrete} for more details about lower and anchored sets. 
Following a similar argument to \cite{adcock2022efficient}, given $n \in \bbN$, we now let
\be{
\label{HC_index_set_inf}
\Lambda = \Lambda^{\mathsf{HCI}}_{n} = \left \{ \bm{\nu} = (\nu_k)^{\infty}_{k=1} \in \cF : \prod_{k:\nu_{k}\neq 0} (\nu_k + 1) \leq n,\ \nu_k = 0,\ k > n \right \} \subset \cF.
}
Note that $\Lambda^{\mathsf{HCI}}_{n}$ is isomorphic to the $n$-dimensional \textit{hyperbolic cross} index set of order $n-1$. A key property of this set is that it contains every anchored set (Definition \ref{d:lower-anchored-set}) of size at most $n$ (see, e.g., \cite[Prop.\ 2.18]{Adcock2022}).
Notice also that 
\be{
\label{N_bound}
N := | \Lambda^{\mathsf{HCI}}_{n} | \leq  \E n^{2 + \log_2(n)},\quad \forall n \in \bbN.
}
See \cite[Lem.\ B.5]{Adcock2022} (this result is based on \cite[Thm.\ 4.9]{kuhn2015approximation}).

We now construct the DNN training problem considered in Theorem \ref{t:mainthm1}. First, consider the  Banach-valued, weighted Square-Root LASSO (SR-LASSO) optimization problem \cite{belloni2011square,adcock2019correcting,sun2012scaled} 
\be{
\label{wsr-LASSO}
\min_{\bm{z} \in \cV^N_K} \cG(\bm{z}),\qquad \cG(\bm{z}) : = \lambda \nm{\bm{z}}_{1,\bm{u};\cV} + \nmu{\bm{A} \bm{z} - \bm{f} }_{2;\cV}.
}
 Here $\lambda>0$ is a tuning parameter and $\bm{A}$ and $\bm{f}$ are as in \eqref{def-measMatrix}.  To obtain a DNN training problem, we replace $\bm{A}$ with its approximation $\bm{A}'$, defined by \eqref{def_A_NN_general}, giving the optimization problem
\be{
\label{wsr-LASSO_NN}
\min_{\bm{z} \in \cV^N_K} \cG'(\bm{z}),\qquad \cG'(\bm{z}) : = \lambda \nm{\bm{z}}_{1,\bm{u};\cV} + \nmu{\bm{A}' \bm{z} - \bm{f} }_{2;\cV} .
}
To show that \R{wsr-LASSO_NN} is equivalent to a DNN training problem we argue as follows. Let $\{ \varphi_i \}_{i=1}^K$ be the basis of $\cV_K$ and $\bm{z} = (z_{\bm{\nu}_j})_{j=1}^N$ be an arbitrary element of $\cV^N_K$. Now, recall that $\cN$ is the class of DNNs defined in \eqref{class_DNN}. 
Then, we can associate $\bm{z}$ with a DNN  $\Phi \in \cN$ via the relation
\be{
\label{zZrelation}
\Phi = \bm{Z}^{\top} \Phi_{\Lambda,\delta},\quad \text{where $\bm{Z} =\left(  Z_{j,k} \right)_{j,k=1}^{N,K} \in \bbR^{N \times K}$ is such that }z_{\bnu_j} = \sum_{k=1}^K  Z_{j,k} \varphi_k, \ \forall j \in [N].
}
Now observe that
\bes{
f_{\Phi,\Theta}(\bm{y}) \hspace{-0.1cm}
=\hspace{-0.2cm} \sum^{K}_{k=1} ((\Phi \circ \cT_{\Theta})(\bm{y}))_k \varphi_k 
=\hspace{-0.2cm} \sum^{K}_{k=1} (\bm{Z}^{\top} \Phi_{\Lambda,\delta,{\Theta}}(\bm{y}))_k \varphi_k 
=\hspace{-0.2cm} \sum^{K}_{k=1} \sum_{j=1}^N Z_{j,k} \Phi_{\bm{\nu}_j,\delta,{\Theta}}(\bm{y}) \varphi_k 
=\hspace{-0.2cm} \sum_{\bm{\nu} \in \Lambda} z_{\bm{\nu}} \Phi_{\bm{\nu},\delta,{\Theta}}(\bm{y}). }
Hence, if $d_i = f(\bm{y}_i) + n_i \in \cV$ are the noisy evaluations of $f$, then
\bes{
\nm{\bm{A}' \bm{z} - \bm{f}}_{2;\cV} = \sqrt{\frac1m \sum^{m}_{i=1} \nm{ \sum_{\bm{\nu} \in \Lambda} z_{\bm{\nu}} \Phi_{\bm{\nu},\delta , {\Theta}}(\bm{y}_i) - d_i }^2_{\cV} } = \sqrt{\frac1m \sum^{m}_{i=1} \nm{f_{\Phi,\Theta}(\bm{y}_i) - d_i }^2_{\cV} }.
}
Now let $\cJ : \cN \rightarrow [0,\infty)$ be the regularization functional defined by 
\bes{
\nm{\bm{z}}_{1,\bm{u};\cV} = \sum_{j=1}^{N} u_{\bm{\nu}_j}\|z_{\bnu_j} \|_{\cV} = \sum_{j=1}^{N}  u_{\bm{\nu}_j} \nm{\sum_{k=1}^K Z_{j,k} \varphi_k }_{\cV} : = \cJ(\Phi),
}
where $\Phi \in \cN$ is as in \eqref{zZrelation}. Clearly $\cJ$ is a norm over the trainable parameters, as claimed. Using this and the previous relation, we deduce that \eqref{wsr-LASSO_NN} is equivalent to the DNN training problem
\bes{
\min_{\Phi \in \cN} \sqrt{\frac1m \sum^{m}_{i=1} \nm{f_{\Phi,\Theta}(\bm{y}_i) - d_i }^2_{\cV} } +\lambda \cJ(\Phi).
}
By this, we mean that every minimizer $\hat{\Phi} \in \cN$ of this problem corresponds to a minimizer $\hat{\bm{z}}$ of \eqref{wsr-LASSO_NN} via the relation \eqref{zZrelation}, and vice versa.

\subsection{Known anisotropy recovery}\label{S:Know_recov}

In the previous case, we assumed that the {polynomial} coefficients were approximately (weighted) sparse, but, due to the unknown anisotropy, we do not know which {of these} coefficients are the most significant. Thus we choose the set $\Lambda \subset\cF$ as a large set in which we expect these significant coefficients to lie. Conversely, in the known anisotropy setting, we effectively have this knowledge. Hence, we can choose a smaller set $S \subset \cF$ of size $|S| = s$ which contains the indices of these coefficients. {Indeed, later in our proofs, we will see that it is possible to choose such a set $S$ depending on $\bm{b}$ and yielding the desired approximation rate in terms of $s$ (see Remark \ref{rem:S-idpt-f}).}

Analogously to \S \ref{S:formulation_recovery}, we define the normalized measurement matrix and the approximate normalized measurement matrix by 
 \begin{equation}
 \label{def-measMatrix-known}
 \bm{A} =\left(\frac{\Psi_{\bnu_j} (\y_i)}{\sqrt{m}} \right)^{m,s}_{i,j=1} \in \bbR^{m \times s}, \quad \text{and}\quad
 \bm{A}' := \left(\frac{\Phi_{\bnu_j,\delta,{\Theta}} (\y_i)}{\sqrt{m}} \right)^{m,s}_{i,j=1} \in \bbR^{m \times s},
 \end{equation}
where $\{ \bm{\nu}_1,\ldots,\bm{\nu}_s \}$ is an ordering of $S$.
Likewise, we  truncate the expansion of $f$ and its vector coefficients based on \eqref{f_exp_trunc} for the index set $S$. Defining the class of DNNs $\cN$ as in \eqref{class_DNN}, except with $\Lambda$ and $N$ replaced by $S$ and $s$, respectively, we now see that the training problem \eqref{trainingprob_NN} can be expressed as the Banach-valued minimization problem
\begin{equation}
\label{def:G_LS_probelmA}
\min_{\bm{z} \in \cV^s_K} \cG'(\bm{z}),\qquad \cG'(\bm{z}) : =  \nmu{\bm{A}' \bm{z} - \bm{f}}_{2;\cV},
\end{equation}
where $\bm{f} = \frac{1}{\sqrt{m}} (d_i)^{m}_{i=1} \in \cV^m$.  To be precise, any $\bm{\hat{z}}=(\hat{z}_{\bnu})_{\bnu \in S}$ that is a minimizer of  \eqref{def:G_LS_probelmA} defines a minimizer $\hat{\Phi}$ of \eqref{trainingprob_NN}  via the relation \eqref{zZrelation}, except with $\Lambda$ and $N$ replaced by $S$ and $s$, respectively. As before, we also consider \R{def:G_LS_probelmA} as an approximation to a minimization problem with matrix $\bm{A}$ for the polynomial coefficients $\bm{c}_S$:
\bes{
\min_{\bm{z} \in \cV^s_K} \cG(\bm{z}),\qquad \cG(\bm{z}) : =  \nm{\bm{A} \bm{z} - \bm{f}}_{2;\cV}.
}
Notice that, upon squaring the objective function, this is equivalent to an algebraic least-squares problem.
To end this section, it is worth mentioning here that, for ease of notation, we write $\cG$ and $\cG'$ for the objective functions in both the known and unknown anisotropy cases. Mathematically, the known anisotropy setting is just a particular case of the unknown anisotropy setting with $\lambda = 0$ and $\Lambda$ and $N$ replaced by $S$ and $s$, respectively.

\section{Banach-valued compressed sensing}\label{ss:HilbertCS}
In this section, we extend relevant aspects of the theory of compressed sensing to the Banach-valued setting, and use these to obtain various error bounds and sample complexity estimates that will be needed later. We refer to \cite{Adcock2022,AdcockEtAl2021MSML,adcock2022efficient,dexter2019mixed} for the classical (scalar- and Hilbert-valued) setting.

\subsection{Weighted sparsity and weighted  rNSP}
 
For positive weights $\bm{w}= (w_{\bm{\nu}})_{\bm{\nu} \in \cF}  > \bm{0}$ and a multi-index set $S\subseteq \cF$, we define the weighted cardinality of $S$ as
\bes{
|S|_{\bm{w}} : = \sum_{ \bnu \in S} w^2_{\bnu}.
}
Note that  $|S|_{\bm{w}}$ may take values in $\bbR\cup \{+\infty\}$.
Now, let $k\geq 0$ and $\Lambda \subseteq \cF$ be a multi-index set. Then, given a $\cV$-valued vector $\bm{z} = (z_{\bm{\nu}})_{\bm{\nu} \in \Lambda}$, we say that $\bm{z}$ is \textit{weighted $(k,\bm{w})$-sparse} if  $| \supp(\bm{z}) |_{\bm{w}} \leq k$, where
\begin{equation}\label{def_support}
 \supp(\bm{z}):= \{ \bnu \in \Lambda: \| z_{\bnu} \|_{\cV} \neq 0\}
 \end{equation}  
is the \textit{support} of the vector $\bm{z}$. Let 
 \begin{equation}\label{def_Sigmakw}
\Sigma_{k,\bm{w}}:= \{ \bm{z} \in \ell^p_{\bm{w}}(\Lambda;\cV) : | \supp(\bm{z}) |_{\bm{w}} \leq k \}
\end{equation}
denote the set of such vectors. Then, for $0 < p \leq 2$, we let
\begin{equation}\label{weighted_best_s_term}
\sigma_{k}(\bm{c})_{p,\bm{w};\cV} = \min \left \{ \nm{\bm{c} - \bm{z}}_{p,\bm{w};\cV} : \bm{z} \in \Sigma_{k,\bm{w}} \right \}
\end{equation}
be the \textit{$\ell^p_{\bm{w}}$-norm weighted best $(k,\bm{w})$-term approximation error} of an arbitrary $\bm{c} \in \ell^p_{\bm{w}}(\Lambda;\cV) $.
Notice that this is equivalent to
\begin{equation}
\label{sigma-k-w-equiv}
\sigma_{k}(\bm{c})_{p,\bm{w};\cV} = \inf \left \{ \nm{\bm{c} - \bm{c}_S}_{p,\bm{w}; \cV} : S \subseteq \Lambda,\ |S|_{\bm{w}} \leq k \right \}.
\end{equation}
Here and elsewhere, for a sequence $\bm{c} = (c_{\bm{\nu}})_{\bm{\nu} \in \cF}$ and a set $S \subseteq \cF$,  we write $\bm{c}_{S}$ for the sequence with $\bm{\nu}$th entry equal to $c_{\bm{\nu}}$ if $\bm{\nu} \in S$ and zero otherwise.
For convenience, for $s \in \bbN$, we also write
\begin{equation}
\label{sigma-equiv}
\begin{split}
\sigma_{s}(\bm{c})_{p;\cV} &= \inf \left \{ \nm{\bm{c} - \bm{z}}_{p; \cV} : \bm{z}\in \ell^{p}(\Lambda; \cV), \, |\supp(\bm{z})| \leq s \right \}
\\
&= \inf \left \{ \nm{\bm{c} - \bm{c}_S}_{p; \cV} :  S \subseteq \Lambda,\ |S| \leq s \right \}
\end{split}
\end{equation}
for the (unweighted)   \textit{$\ell^p$-norm best $s$-term approximation error}.

Now let $\bm{A} \in \cB(\cW^N,\cW^m)$ where $\cW \subseteq \cV$ is a closed  subspace. 
 We say that $\bm{A}$ has the \textit{weighted robust Null Space Property (rNSP) over $\cW$ of order $(k,\bm{w})$  with constants $0 \leq \rho<1$ and $\gamma \geq 0$} if 
\begin{equation*}
\nm{\x_S}_{2;\cV} \leq  \dfrac{\rho \nm{\x_{S^c}}_{1,\bm{w};\cV} }{\sqrt{k}}+\gamma \nm{\bm{A} \x}_{2;\cV} ,\quad \forall \x \in \cW^N,
\end{equation*} 
for any $S \subseteq [N]$ with $|S|_{\bm{w}} \leq k$. See, e.g., \cite[Defn.\ 6.22]{Adcock2022} or \cite[Sec. 4.1]{Rauhut2016}. 
This property implies certain distance bounds in the $\ell^1_{\bm{w}}$- and $\ell^2$-norms. The following lemma is a straightforward extension to Banach spaces from the Hilbert-valued counterpart (see, e.g., \cite[Lem.\  7.4]{adcock2022efficient}). For this reason, we omit its proof.

\begin{lemma}
[Weighted rNSP implies error bounds for inexact minimizers]
\label{lemma-musuboptimal}
Suppose that $\bm{A} \in \cB(\cW ^N,\cW ^m)$  has the weighted rNSP  over a closed subspace $\cW \subseteq \cV$ of order $(k,\bm{w})$ with constants  $0\leq \rho<1$ and $\gamma > 0$. Let $\x \in \cW^N$, $\f \in \cV^m$ and $\bm{e} = \bm{A}\x- \bm{f} \in \cVM$, and consider the minimization problem 
\be{
\label{SRLASSO-CS-sec}
\min_{\bm{z}\in \cW^N} \cG(\bm{z}),\qquad \cG(\bm{z}) : = \lambda \nm{\bm{z}}_{1,\bm{w};\cV} + \nmu{\bm{A} \bm{z} - \bm{f}}_{2;\cV}{,}
}
with parameter
\be{
\label{lambda-bound-rNSP-err}
0 < \lambda \leq  \dfrac{(1+ \rho)^2}{(3+\rho) \gamma } {k}^{-1/2}.
}
Then
\eas{
\nm{\tilde{\x}-\x}_{1,\bm{w};\cV} & \leq C_1 \left( 2 \sigma_{k}(\x)_{1,\bm{w};\cV}
+\frac{\cG(\tilde{\x}) - \cG(\x) }{\lambda}  \right)+ \left( \dfrac{C_1}{\lambda} +C_2 \sqrt{k} \right) \nm{\e}_{2;\cV} ,
\\
\nm{\tilde{\x}-\x}_{2;\cV} & \leq  \dfrac{C_1'}{\sqrt{k}} \left( 2 {\sigma_{k}(\x)_{1,\bm{w};\cV}} 
+\dfrac{\cG(\tilde{\x}) - \cG(\x) }{ \lambda} \right) + \left( \dfrac{C'_1}{\sqrt{k}\lambda} + C'_2 \right)\nm{\e}_{2;\cV} ,
}
for any $\tilde{\x} \in \cW^N$,where the constants are given by
\begin{equation*}
C_1 = \dfrac{(1+\rho)}{(1-\rho)}, \quad C_2 =  \dfrac{2\gamma }{(1-\rho)}, \quad C_1' = \dfrac{(1+\rho)^2}{1-\rho}  \quad \text{ and } \quad C_2' = \dfrac{(3+\rho) \gamma}{1-\rho}.
\end{equation*}
\end{lemma}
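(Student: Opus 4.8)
The statement is asserted to be "a straightforward extension to Banach spaces from the Hilbert-valued counterpart," so the plan is to mirror the standard proof of the analogous result for Hilbert-valued (or scalar-valued) weighted square-root LASSO error bounds, checking at each step that only the weighted rNSP inequality and the triangle inequality for $\nm{\cdot}_{\cV}$ are used — never an inner product. The key structural fact to isolate first is that the weighted rNSP over $\cW$ \emph{self-improves} to an $\ell^1_{\bm{w}}$-distance bound: if $\bm{A}$ has the weighted rNSP of order $(k,\bm{w})$ with constants $\rho,\gamma$, then for all $\bm{v}\in\cW^N$ one has $\nm{\bm{v}}_{1,\bm{w};\cV}\le \frac{1+\rho}{1-\rho}\big(\nm{\bm{v}_{S^c}}_{1,\bm{w};\cV}-\nm{\bm{v}_S}_{1,\bm{w};\cV}\big) + \frac{2\sqrt{k}\,\gamma}{1-\rho}\nm{\bm{A}\bm{v}}_{2;\cV}$ for any $S$ with $|S|_{\bm{w}}\le k$ (this uses $\sqrt{|S|_{\bm{w}}}\le\sqrt{k}$ and Cauchy–Schwarz-type estimates $\nm{\bm{v}_S}_{1,\bm{w};\cV}\le\sqrt{|S|_{\bm{w}}}\,\nm{\bm{v}_S}_{2;\cV}$, both of which hold verbatim in the Banach-valued setting since they only involve the scalar weights and the norms $\nm{v_{\bm{\nu}}}_{\cV}$). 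I would state and prove this auxiliary $\ell^1_{\bm{w}}/\ell^2$-pair of consequences of the rNSP as an intermediate lemma.

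\medskip
Next, I would apply this with $\bm{v} = \tilde{\bm{x}} - \bm{x}$ and $S$ chosen to (nearly) optimize the weighted best-$(k,\bm{w})$-term approximation of $\bm{x}$, i.e. $|S|_{\bm{w}}\le k$ and $\nm{\bm{x}_{S^c}}_{1,\bm{w};\cV}$ close to $\sigma_k(\bm{x})_{1,\bm{w};\cV}$. The optimization-error input enters through the objective-value gap: since $\cG(\tilde{\bm{x}}) = \lambda\nm{\tilde{\bm{x}}}_{1,\bm{w};\cV} + \nm{\bm{A}\tilde{\bm{x}}-\bm{f}}_{2;\cV}$ and $\bm{e}=\bm{A}\bm{x}-\bm{f}$, a triangle inequality gives $\nm{\bm{A}(\tilde{\bm{x}}-\bm{x})}_{2;\cV}\le \nm{\bm{A}\tilde{\bm{x}}-\bm{f}}_{2;\cV} + \nm{\bm{e}}_{2;\cV}$, and one bounds $\nm{\bm{A}\tilde{\bm{x}}-\bm{f}}_{2;\cV}\le \cG(\tilde{\bm{x}}) \le \cG(\bm{x}) + (\cG(\tilde{\bm{x}})-\cG(\bm{x}))$ with $\cG(\bm{x}) = \lambda\nm{\bm{x}}_{1,\bm{w};\cV}+\nm{\bm{e}}_{2;\cV}$; the $\lambda\nm{\tilde{\bm{x}}}_{1,\bm{w};\cV}$ and $\lambda\nm{\bm{x}}_{1,\bm{w};\cV}$ terms are then played against the $\nm{\bm{v}_{S^c}}_{1,\bm{w};\cV}-\nm{\bm{v}_S}_{1,\bm{w};\cV}$ combination using the splitting $\nm{\bm{x}}_{1,\bm{w};\cV} - \nm{\tilde{\bm{x}}}_{1,\bm{w};\cV} \le \nm{\bm{v}_S}_{1,\bm{w};\cV} - \nm{\bm{v}_{S^c}}_{1,\bm{w};\cV} + 2\nm{\bm{x}_{S^c}}_{1,\bm{w};\cV}$, which is again just triangle inequalities on each coordinate norm. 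Substituting, using the hypothesis $\lambda\le\frac{(1+\rho)^2}{(3+\rho)\gamma}k^{-1/2}$ to absorb the $\lambda\sqrt{k}\gamma$ terms and keep $\rho<1$ factors, and solving the resulting linear inequality for $\nm{\bm{v}}_{1,\bm{w};\cV}$ yields the first displayed bound with $C_1 = \frac{1+\rho}{1-\rho}$, $C_2 = \frac{2\gamma}{1-\rho}$.

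\medskip
Finally, for the $\ell^2$-bound I would feed the just-obtained $\ell^1_{\bm{w}}$-estimate back into the rNSP applied to $\bm{v}=\tilde{\bm{x}}-\bm{x}$ on the set $S$: the rNSP gives $\nm{\bm{v}_S}_{2;\cV}\le \frac{\rho}{\sqrt{k}}\nm{\bm{v}_{S^c}}_{1,\bm{w};\cV} + \gamma\nm{\bm{A}\bm{v}}_{2;\cV}$, and combining $\nm{\bm{v}}_{2;\cV}\le\nm{\bm{v}_S}_{2;\cV}+\nm{\bm{v}_{S^c}}_{2;\cV}\le\nm{\bm{v}_S}_{2;\cV}+\frac{1}{\sqrt{k}}\nm{\bm{v}_{S^c}}_{1,\bm{w};\cV}$ (the last step using $|S^c \cap \supp|$-type weighted embeddings, which hold coordinatewise) with $\nm{\bm{v}_{S^c}}_{1,\bm{w};\cV}\le\nm{\bm{v}}_{1,\bm{w};\cV}$ and the bound on $\nm{\bm{A}\bm{v}}_{2;\cV}$ already computed, one arrives at the second display with $C_1' = \frac{(1+\rho)^2}{1-\rho}$, $C_2' = \frac{(3+\rho)\gamma}{1-\rho}$ after tracking constants.

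\medskip
I do not anticipate a genuine obstacle here — this is why the authors omit the proof — but the one place that warrants care is confirming that every inequality used in the Hilbert-space proof survives the replacement of $\nm{\cdot}_{\cH}$ by $\nm{\cdot}_{\cV}$: the danger is a hidden use of orthogonality, e.g. a Pythagorean identity $\nm{\bm{v}}_{2;\cV}^2 = \nm{\bm{v}_S}_{2;\cV}^2 + \nm{\bm{v}_{S^c}}_{2;\cV}^2$. The fix is to route around it via the triangle inequality $\nm{\bm{v}}_{2;\cV}\le\nm{\bm{v}_S}_{2;\cV}+\nm{\bm{v}_{S^c}}_{2;\cV}$ (which holds because the $\ell^2$-sum over disjoint supports is a legitimate norm regardless of $\cV$), at the cost of at most a constant factor — precisely the kind of constant degradation already visible in the $C_i'$ versus $C_i$. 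Since the lemma's constants are stated with this slack, no sharpness is lost, and the proof goes through essentially unchanged from \cite[Lem.\ 7.4]{Adcockmems2022}.
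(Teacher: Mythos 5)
Your overall plan is the right one and matches the source the paper points to (the Hilbert-valued Lemma 7.4 of Adcock, Brugiapaglia, Dexter, Moraga, adapted coordinatewise): derive a weighted rNSP $\Rightarrow$ $\ell^1_{\bm{w}}$-distance bound, feed in the objective-value gap using triangle inequalities, and then pass to $\ell^2$. The $\ell^1_{\bm{w}}$ part is sound, and you are also right that the danger to watch for is a hidden use of orthogonality, which the triangle inequality routes around.

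There is, however, a genuine gap in your Step 3. You claim
\[
\nm{\bm{v}_{S^c}}_{2;\cV} \le \tfrac{1}{\sqrt{k}}\nm{\bm{v}_{S^c}}_{1,\bm{w};\cV}
\]
with $S$ the (near-)optimal weighted support of $\bm{x}$ fixed in Step 2. This inequality is simply false for an arbitrary fixed $S$: take $\bm{v}_{S^c}$ supported on a single index $\bm{\nu}_0$ with $w_{\bm{\nu}_0}=1$ and $k$ large, and the left side is $\nm{v_{\bm{\nu}_0}}_{\cV}$ while the right side is $\nm{v_{\bm{\nu}_0}}_{\cV}/\sqrt{k}$. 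The direction of the inequality is the opposite of Cauchy–Schwarz, so there is no "coordinatewise weighted embedding" that salvages it. What actually holds (and what the Hilbert-valued proof uses) is a weighted Stechkin-type bound that requires the truncation set to be chosen as a top-$k$ \emph{weighted} support of $\bm{v} = \tilde{\bm{x}} - \bm{x}$ itself — not of $\bm{x}$. Concretely, one introduces a second index set $T$, adapted to $\bm{v}$, with $|T|_{\bm{w}}\le k$, applies the rNSP with $T$ in place of $S$ to control $\nm{\bm{v}_T}_{2;\cV}$, and uses the weighted Stechkin inequality $\nm{\bm{v}_{T^c}}_{2;\cV}\lesssim \nm{\bm{v}}_{1,\bm{w};\cV}/\sqrt{k}$ for that particular $T$; this yields the general implication
\[
\nm{\bm{v}}_{2;\cV} \;\lesssim\; \tfrac{1}{\sqrt{k}}\nm{\bm{v}}_{1,\bm{w};\cV} + \nm{\bm{A}\bm{v}}_{2;\cV}
\]
into which the $\ell^1_{\bm{w}}$-distance bound from Step 2 is then substituted. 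Without the auxiliary set $T$, your chain of inequalities breaks at the step quoted above. Once that repair is made, the rest of your proposal (and the claimed constants, up to the constant in the Stechkin bound) goes through as you describe.
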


Notice that defining the weighted rNSP over a closed subspace $\cW \subseteq \cV$ allows us to assert error bounds for the minimization problem over, for example, the space $\cW = \cV_K$. This fact will be useful later in the proofs.

\subsection{Matrices satisfying the weighted rNSP over Banach spaces}

We now derive explicit conditions for a matrix $\bm{A} \in \bbR^{m \times N}$ of the form \R{def-measMatrix} to give rise to an associated operator $\bm{A} \in \cB(\cV^N,\cV^m)$ (see \eqref{def-inducedBLO}) that satisfies the weighted rNSP over $\cV$.  
\begin{lemma}
[Weighted rNSP over $\bbR$ implies weighted rNSP over $\cV$]
\label{l:from_R_to_V_rNSP}
Suppose that a matrix  $\bm{A} \in \bbR^{m \times N}$ satisfies the weighted rNSP over $\bbR$ of order $(k,\bm{w})$ with    $0 \leq \rho < 1$ 
and $\gamma \geq 0$, and let $s^*=s^*(k) := \max \{ | S| : |S|_{\bm{w}} \leq k, \, S \subseteq [N] \}$. 
Then  the  corresponding operator $\bm{A} \in \cB(\cV^N,\cV^m)$ defined by \eqref{def-inducedBLO}
satisfies the weighted rNSP over $\cV$ of order $(k,\bm{w})$ with  constants $0 \leq \rho' < 1$ and $\gamma' >0$ given by $ \rho' = \sqrt{s^*}\rho$ and  $\gamma ' = \sqrt{s^*} \gamma $, respectively. 
\end{lemma}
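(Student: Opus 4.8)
The plan is to deduce the Banach-valued statement from the scalar one by testing against dual functionals, absorbing a factor $\sqrt{s^*}$ in the process. Fix $\bm{x} = (x_{\bnu})_{\bnu} \in \cV^N$ and a set $S \subseteq [N]$ with $|S|_{\bm{w}} \le k$. For each $v^* \in \cV^*$ with $\nm{v^*}_{\cV^*} = 1$, introduce the scalar vector $\bm{x}^{v^*} := (\ip{v^*}{x_{\bnu}}_{\cV})_{\bnu \in [N]} \in \bbR^N$. Since $\bm{A}$ acts entrywise by real scalars (see \eqref{def-inducedBLO}), one has $(\bm{A}\bm{x}^{v^*})_i = \ip{v^*}{(\bm{A}\bm{x})_i}_{\cV}$, hence $|(\bm{A}\bm{x}^{v^*})_i| \le \nm{(\bm{A}\bm{x})_i}_{\cV}$ and so $\nm{\bm{A}\bm{x}^{v^*}}_2 \le \nm{\bm{A}\bm{x}}_{2;\cV}$; likewise $\nm{\bm{x}^{v^*}_{S^c}}_{1,\bm{w}} = \sum_{\bnu \in S^c} w_{\bnu}\,|\ip{v^*}{x_{\bnu}}_{\cV}| \le \nm{\bm{x}_{S^c}}_{1,\bm{w};\cV}$. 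Applying the scalar weighted rNSP of $\bm{A}$ over $\bbR$ to $\bm{x}^{v^*}$ and inserting these two inequalities gives, uniformly over all such $v^*$,
\[
\nm{\bm{x}^{v^*}_S}_2 \;\le\; B, \qquad B := \frac{\rho}{\sqrt{k}}\,\nm{\bm{x}_{S^c}}_{1,\bm{w};\cV} + \gamma\,\nm{\bm{A}\bm{x}}_{2;\cV}.
\]

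The next step is to convert this uniform scalar bound into one on $\nm{\bm{x}_S}_{2;\cV}$. For each $\bnu \in S$, use the duality formula \eqref{norm_Vector} to select $v^*_{\bnu} \in \cV^*$ with $\nm{v^*_{\bnu}}_{\cV^*} = 1$ and $\ip{v^*_{\bnu}}{x_{\bnu}}_{\cV} = \nm{x_{\bnu}}_{\cV}$; then $\nm{x_{\bnu}}_{\cV} = |(\bm{x}^{v^*_{\bnu}})_{\bnu}| \le \nm{\bm{x}^{v^*_{\bnu}}_S}_2 \le B$. Squaring and summing over $\bnu \in S$ yields $\nm{\bm{x}_S}_{2;\cV}^2 = \sum_{\bnu \in S}\nm{x_{\bnu}}_{\cV}^2 \le |S|\,B^2 \le s^*\,B^2$, so taking square roots gives
\[
\nm{\bm{x}_S}_{2;\cV} \;\le\; \frac{\sqrt{s^*}\,\rho}{\sqrt{k}}\,\nm{\bm{x}_{S^c}}_{1,\bm{w};\cV} + \sqrt{s^*}\,\gamma\,\nm{\bm{A}\bm{x}}_{2;\cV},
\]
which is exactly the weighted rNSP over $\cV$ of order $(k,\bm{w})$ with $\rho' = \sqrt{s^*}\rho$ and $\gamma' = \sqrt{s^*}\gamma$; the stated requirement $\rho' < 1$ amounts to the hypothesis $\sqrt{s^*}\,\rho < 1$, which I would flag explicitly.

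The entrywise estimates in the first step are routine consequences of $|\ip{v^*}{v}_{\cV}| \le \nm{v}_{\cV}$ when $\nm{v^*}_{\cV^*} = 1$. The one genuinely lossy — and hence crucial — step is the passage from the family of scalar bounds $\nm{\bm{x}^{v^*}_S}_2 \le B$ to the vector quantity $\nm{\bm{x}_S}_{2;\cV}$: in a general Banach space no single functional simultaneously realizes an $\ell^2$-type aggregate of the block norms $\{\nm{x_{\bnu}}_{\cV}\}_{\bnu \in S}$, forcing one to bound each block separately and pay a factor $\sqrt{|S|} \le \sqrt{s^*}$. In the Hilbert-valued setting one instead argues coordinatewise with respect to an orthonormal basis of $\cV$ and avoids this loss, which is precisely why the Hilbert-space theorems come out sharper than their Banach-space counterparts.
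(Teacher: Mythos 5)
Your proof is correct and follows essentially the same route as the paper's: reduce to the scalar rNSP by testing against unit dual functionals (noting $\bm{A}$ commutes with the action of $v^*$), then pay a factor $\sqrt{|S|} \le \sqrt{s^*}$ when passing from the uniform scalar bound back to the Banach-valued $\ell^2$ block norm by choosing, for each $\bnu\in S$, a norming functional $v^*_{\bnu}$. Your flag about the implicit requirement $\sqrt{s^*}\,\rho < 1$ is also well taken: the lemma asserts $\rho' < 1$ as part of its conclusion, but this does not follow from the stated hypotheses, and the paper's own proof is likewise silent on the point.
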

\begin{proof}
Let $\bm{v}\in \cV^N $ and $|S|_{\bm{w}} \leq k$. Using   \eqref{norm_Vector} we get 
\begin{equation}\label{eq_rnspBA}
\| \bm{v}_{S}\|_{2;\cV}^2 
= \sum_{i \in S} \| v_i\|_{\cV}^2 
= \sum_{i \in S}  \left( \max_{\overset{v^* \in \cV^*}{\|v^* \|_{\cV^*} =  1}} |v^* (v_i)|  \right)^2 
\leq |S| \left( \max_{\overset{v^* \in \cV^*}{\|v^* \|_{\cV^*} = 1}} \nm{ v^* (\bm{v}_S)}_{2}  \right)^2, 
\end{equation}
where $v^*(\bm{z}) = (v^*(z_i))^{N}_{i=1}$ for $\bm{z} = (z_i)^{N}_{i=1} \in \cV^N$.
Since $\bm{A}$ has the weighted rNSP over $\bbR $ we get
\begin{equation}\label{phi_bound}
\nm{ v^* (\bm{v}_S)}_{2} \leq  \dfrac{\rho \nm{v^* (\bm{v}_{S^c})}_{1,\bm{w}} }{\sqrt{k}}+\gamma \nm{\bm{A} (v^* (\bm{v}))}_{2} ,\quad \forall \bm{v}\in \cV^N, \forall v^*\in \cV^*.
\end{equation}
If $\nm{v^*}_{\cV^*} = 1$, then we also have
\begin{equation*}
\nm{v^* (\bm{v}_{S^c})}_{1,\bm{w}}  \leq \nm{\bm{v}_{S^c}}_{1,\bm{w};\cV} \quad \text{ and } \quad  \nm{\bm{A} (v^* (\bm{v}))}_{2} = \nm{v^* (\bm{A} (\bm{v}))}_{2}  \leq \nm{\bm{A} (\bm{v})}_{2;\cV}.
 \end{equation*} 
Therefore
\begin{equation*}
\nm{ v^* (\bm{v}_S)}_{2}  \leq  \dfrac{\rho \nm{\bm{v}_{S^c}}_{1,\bm{w};\cV} }{\sqrt{k}}+\gamma \nm{\bm{A} (\bm{v})}_{2;\cV} ,\quad \forall \bm{v}\in \cV^N,\ \forall v^*  \in \cV^*, \nm{v^* }_{\cV^*} = 1.
\end{equation*}
Combining this with \eqref{eq_rnspBA} and noting that $|S| \leq s^*(k)$, we deduce that
\begin{equation*}
\| \bm{v}_{S}\|_{2;\cV}  =  \sqrt{{s^*(k)}}   \left(  \dfrac{\rho \nm{\bm{v}_{S^c}}_{1,\bm{w};\cV} }{\sqrt{k}}+\gamma \nm{\bm{A} (\bm{v})}_{2;\cV}   \right),\quad \forall \bm{v}\in \cV^N,
\end{equation*}
as required.
\end{proof}

In order to show that the matrix \R{def-measMatrix}, and therefore the resulting linear operator, {satisfies} the weighted rNSP, we follow the standard approach and first show that it satisfies the weighted restricted isometry property. We say that a matrix $\bm{A} \in \bbR^{m \times N}$ has the \textit{weighted Restricted Isometry Property (RIP) over $\bbR$} of order $(k,\bm{w})$ if there exists a constant $0\leq \delta <1$ such that
\begin{equation}
(1-\delta) \nm{\bm{z}}_{2}^2 \leq \nm{\bm{A} \bm{z}}_{2}^2 \leq (1+\delta) \nm{\bm{z}}_{2}^2,\quad \forall \bm{z}\in \Sigma_{k,\bm{w}}.
\end{equation}
The smallest constant such that this property holds is called  the $k$th \textit{weighted restricted isometry constant} of $\bm{A}$ and is denoted by $\delta_{k,\bm{w}}$. See, e.g., \cite[Def.~6.25]{adcock2021compressive} and \cite[Sec.~4.2]{Rauhut2016}.

As an alternative to the equivalence shown in \cite[Lem.~7.5]{adcock2022efficient} between the weighted RIP over $\bbR$ and the 
weighted RIP over a Hilbert space, Lemma \ref{l:from_R_to_V_rNSP} shows that the weighted RIP over $\bbR$ is a sufficient condition for the  weighted RIP over a Banach space. The extra factor $\sqrt{s^*}$ is {one of the causes} of the extra $m$-dependent factors {-- the other being the absence of Parseval's identity in Banach spaces --} in the final error bound for the Banach-valued case as opposed to the Hilbert-valued case. 

We also need the following result, which shows that the weighted RIP over $\bbR$ implies the weighted rNSP over $\bbR$. See, e.g., \cite[Thm.~6.26]{Adcock2022}.
\begin{lemma}[weighted RIP implies the weighted rNSP]
\label{l:implies-RIP} Let $\bm{w}> \bm{0}$, $k > 0$ and suppose that $\bm{A} \in \bbR^{m\times N}$ has the weighted RIP over $\bbR$ of order $(2k,\bm{w})$ with constant $\delta_{2k,\bm{w}}< (2 \sqrt{2}-1)/7$. Then $\bm{A}$ has the weighted rNSP over  $\bbR$ of order $(k,\bm{w})$ with constants $\rho=2 \sqrt{2} \delta_{2k,\bm{w}} / (1-\delta_{2k,\bm{w}})$ and $\gamma=\sqrt{1+\delta_{2k,\bm{w}}} / (1-\delta_{2k,\bm{w}})$. 
\end{lemma}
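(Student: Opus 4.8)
The plan is to establish the weighted robust null space property by adapting the classical "RIP implies rNSP" argument (cf. the unweighted Foucart--Rauhut proof and its weighted extension) to weighted sparsity throughout. Fix $\bm{z} \in \bbR^N$ and a set $S \subseteq [N]$ with $|S|_{\bm{w}} \leq k$; the goal is the inequality $\nm{\bm{z}_S}_{2} \leq \rho\, k^{-1/2}\, \nm{\bm{z}_{S^c}}_{1,\bm{w}} + \gamma\, \nm{\bm{A}\bm{z}}_{2}$ with $\rho,\gamma$ as stated. Writing $S_0 := S$, I would first decompose the complement: order the coordinates of $\bm{z}_{S^c}$ by decreasing magnitude and greedily partition $\supp(\bm{z}_{S^c})$ into successive blocks $S_1, S_2, \dots$, each of weighted cardinality at most $k$ (so that $S_0 \cup S_1$ and every $S_0 \cup S_j$ have weighted cardinality at most $2k$). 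Since $\bm{w} \geq \bm{1}$ and consecutive blocks are comparable in size, this yields the weighted block estimate $\sum_{j \geq 2} \nm{\bm{z}_{S_j}}_{2} \leq 2\, k^{-1/2}\, \nm{\bm{z}_{S^c}}_{1,\bm{w}}$, the weighted counterpart of the usual $\sum_{j\geq 2}\nm{\bm{z}_{S_j}}_2 \leq s^{-1/2}\nm{\bm{z}_{S^c}}_1$, and the source of the factor $2$ in $\rho$.

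The second ingredient is the standard near-orthogonality consequence of the RIP: if $\bm{u},\bm{v}$ have disjoint supports with $|\supp(\bm{u})|_{\bm{w}} + |\supp(\bm{v})|_{\bm{w}} \leq 2k$, then $|\langle \bm{A}\bm{u}, \bm{A}\bm{v}\rangle| \leq \delta_{2k,\bm{w}}\, \nm{\bm{u}}_2\, \nm{\bm{v}}_2$, obtained by polarization from the weighted RIP inequalities on $\Sigma_{2k,\bm{w}}$. The key is to apply this not to $S_0 \cup S_1$ against $S_j$ directly (their weighted cardinalities would sum to $3k$), but separately to $S_0$ against $S_j$ and to $S_1$ against $S_j$ — each pair totals at most $2k$ in weighted cardinality — and then recombine using $\nm{\bm{z}_{S_0}}_2 + \nm{\bm{z}_{S_1}}_2 \leq \sqrt{2}\,\nm{\bm{z}_{S_0 \cup S_1}}_2$. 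This produces the remaining $\sqrt{2}$ in $\rho$'s numerator $2\sqrt{2}$.

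To assemble: since $|S_0 \cup S_1|_{\bm{w}} \leq 2k$, the RIP lower bound gives $\nm{\bm{z}_S}_2 \leq \nm{\bm{z}_{S_0\cup S_1}}_2 \leq (1-\delta_{2k,\bm{w}})^{-1/2}\,\nm{\bm{A}\bm{z}_{S_0\cup S_1}}_2$; I would then expand $\bm{A}\bm{z}_{S_0\cup S_1} = \bm{A}\bm{z} - \sum_{j\geq 2}\bm{A}\bm{z}_{S_j}$, pair with $\bm{A}\bm{z}_{S_0\cup S_1}$, bound the $\bm{A}\bm{z}$ term by the RIP upper bound $\nm{\bm{A}\bm{z}_{S_0\cup S_1}}_2 \leq \sqrt{1+\delta_{2k,\bm{w}}}\,\nm{\bm{z}_{S_0\cup S_1}}_2$ and the tail terms by the near-orthogonality estimate, divide through by $\nm{\bm{z}_{S_0\cup S_1}}_2$, and substitute the block estimate. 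This gives exactly $\gamma = \sqrt{1+\delta_{2k,\bm{w}}}/(1-\delta_{2k,\bm{w}})$ and $\rho = 2\sqrt{2}\,\delta_{2k,\bm{w}}/(1-\delta_{2k,\bm{w}})$, with $\rho < 1$ equivalent to $\delta_{2k,\bm{w}} < (2\sqrt{2}-1)/7$.

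I expect the only genuinely delicate point — the main obstacle — to be the weighted greedy partitioning: unlike the unweighted case, the blocks cannot be forced to have weighted cardinality exactly $k$, so one must argue carefully, using $\bm{w} \geq \bm{1}$ together with control of $\max_{\nu} w_\nu^2$ relative to $k$, that a partition into blocks of weighted cardinality at most $k$ exists and that the loss in the resulting block estimate is only a universal constant. Everything else is routine constant tracking. Since this statement is precisely \cite[Thm.~6.26]{Adcock2022}, for the present paper it suffices to cite that reference; the outline above is how one reconstructs the proof.
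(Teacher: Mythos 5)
The paper does not prove this lemma; it is stated and immediately attributed to \cite[Thm.~6.26]{Adcock2022}, so there is no in-paper proof to compare against, and your closing remark --- that a citation suffices here --- is exactly what the authors do. Your reconstruction is the right one, and your tracking of the constants is consistent: $\gamma = \sqrt{1+\delta_{2k,\bm{w}}}/(1-\delta_{2k,\bm{w}})$ comes from the RIP upper and lower bounds applied to $\bm{z}_{S\cup S_1}$; the $\sqrt{2}$ in $\rho$ comes from pairing $S$ and $S_1$ \emph{separately} against each tail block $S_j$ (so that only $\delta_{2k,\bm{w}}$, not $\delta_{3k,\bm{w}}$, is needed) together with $\nm{\bm{z}_{S}}_2+\nm{\bm{z}_{S_1}}_2\le\sqrt{2}\,\nm{\bm{z}_{S\cup S_1}}_2$; and the remaining factor $2$ comes from the weighted block estimate $\sum_{j\ge 2}\nm{\bm{z}_{S_j}}_2 \le 2k^{-1/2}\nm{\bm{z}_{S^c}}_{1,\bm{w}}$, after which $2\sqrt{2}\,\delta/(1-\delta)<1$ yields the threshold $(2\sqrt{2}-1)/7$. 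The one place you should be more explicit is precisely the delicate point you flag. To obtain the factor $2$ in the block estimate you need each tail block other than the last to satisfy $|S_j|_{\bm{w}}\ge k/2$ in addition to $|S_j|_{\bm{w}}\le k$; with the greedy ``fill until the next index would exceed $k$'' construction, this forces a hypothesis of the form $\max_{\bm{\nu}} w_{\bm{\nu}}^2 \le k/2$ (otherwise a block may close well short of $k/2$, degrading the constant and hence the threshold on $\delta_{2k,\bm{w}}$). The lemma as restated here assumes only $\bm{w}>\bm{0}$ and $k>0$, so either this hypothesis is present in \cite[Thm.~6.26]{Adcock2022} and elided in the paraphrase, or it is absorbed by the convention that the rNSP is vacuous when no admissible $S$ exists (cf.\ Remark~\ref{rmk:casek}). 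If you were writing the proof out in full rather than citing, you would need to state and use that hypothesis explicitly.
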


We now assert conditions on $m$ under which the measurement matrix \eqref{def-measMatrix} satisfies the weighted RIP. For this, we use the following result, which is an adaptation of \cite[Thm.\ 2.14]{Brugiapaglia2021} (we make several small notational changes herein for consistency with the notation used in this paper; moreover, we replace the logarithmic factor $\log^2(k/\delta^2)$ with $\log^2(k/\delta)$, as revealed by an inspection of the proof).

\begin{lemma}[Weighted RIP for Chebyshev and Legendre polynomials]\label{l:LegMat_RIP_general}
Let $\varrho$ be the tensor-product uniform or Chebyshev measure on $\cU = [-1,1]^{\bbN}$, $\{ \Psi_{\bm{\nu}} \}_{\bm{\nu} \in \cF}$ be the corresponding tensor-product orthonormal Legendre  or {Chebyshev} polynomial basis of $L^2_{\varrho}(\cU)$, $\Lambda = \Lambda^{\mathsf{HCI}}_{n}$ be as in \eqref{HC_index_set_inf} for some $n \geq 1$ and $\bm{y}_1,\ldots,\bm{y}_m$ be drawn independently and identically from the measure $\varrho$. Let  $ c_0$ be a universal constant, $0 < \delta, \epsilon< 1$, $k \geq 1$ and suppose that   
\be{
\label{m-cond-for-wRIP}
m \geq   c_0 \cdot \delta ^{-2} \cdot k \cdot  \left(  \log^2(k/\delta) \cdot \log^{2}(\E n)+\log(2/\epsilon)\right).
}
Then, with probability at least $1-\epsilon$, the matrix $\bm{A}$ defined in \R{def-measMatrix} satisfies the weighted RIP over $\bbR$ of order $(k,\bm{u})$ with constant $\delta_{k,\bm{u}}$, where $\bm{u}$ are the intrinsic weights \eqref{weights_def}.
\end{lemma}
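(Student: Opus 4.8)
The statement to prove is Lemma~\ref{l:LegMat_RIP_general}, the weighted RIP for tensor-product Legendre/Chebyshev polynomials on the hyperbolic cross index set $\Lambda^{\mathsf{HCI}}_n$.

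\textbf{Overall approach.} The plan is to derive this as a corollary of the cited general bounded orthonormal systems result \cite[Thm.~2.14]{Brugiapaglia2021}. That theorem gives a weighted RIP guarantee for subsampled bounded orthonormal systems with weights, provided the sample count $m$ exceeds a quantity depending on the \emph{weighted sparsity level} $k$, the RIP constant $\delta$, the failure probability $\epsilon$, and a logarithmic term involving a uniform bound on the (weighted-normalized) basis functions over the index set. So the real work is to (i) verify that the tensor Legendre/Chebyshev basis $\{\Psi_{\bm\nu}\}_{\bm\nu\in\Lambda}$ with the intrinsic weights $\bm u$ from \eqref{weights_def} forms a system to which that theorem applies, and (ii) extract the specific form of the logarithmic factor $\log^2(\E n)$ from the structure of $\Lambda^{\mathsf{HCI}}_n$.

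\textbf{Key steps, in order.} First I would recall that $\{\Psi_{\bm\nu}\}_{\bm\nu\in\cF}$ is an orthonormal basis of $L^2_\varrho(\cU)$ (stated in \S\ref{S:formulation_recovery}) and that the intrinsic weights satisfy $u_{\bm\nu}=\|\Psi_{\bm\nu}\|_{L^\infty_\varrho(\cU)}\ge 1$, so that $\|\Psi_{\bm\nu}/u_{\bm\nu}\|_{L^\infty_\varrho}\le 1$ uniformly; this is exactly the normalization under which the weighted RIP machinery of \cite{Brugiapaglia2021} operates. Second, I would invoke \cite[Thm.~2.14]{Brugiapaglia2021} for the finite index set $\Lambda=\Lambda^{\mathsf{HCI}}_n$ with $N=|\Lambda|$, which asserts that if
$$
m \gtrsim \delta^{-2}\cdot k\cdot\bigl(\log^2(k/\delta)\cdot\Gamma + \log(2/\epsilon)\bigr),
$$
then with probability at least $1-\epsilon$ the matrix $\bm A$ in \eqref{def-measMatrix} has weighted RIP of order $(k,\bm u)$, where $\Gamma$ is the ``entropy/complexity'' log-factor associated with the system over $\Lambda$. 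Third — the crux — I would bound $\Gamma$ by $\log^2(\E n)$. In the hyperbolic cross case this follows because every multi-index in $\Lambda^{\mathsf{HCI}}_n$ is supported on the first $n$ coordinates and has each entry bounded, so the relevant covering/complexity quantity scales like a power of $\log N$ and $\log N \lesssim \log^2(\E n)$ by the cardinality bound \eqref{N_bound}, $N\le \E n^{2+\log(n)/\log 2}$. Plugging $\Gamma \lesssim \log^2(\E n)$ into the sample complexity condition gives exactly \eqref{m-cond-for-wRIP}.

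\textbf{Main obstacle and remarks.} The main obstacle is purely bookkeeping: matching the precise statement and hypotheses of \cite[Thm.~2.14]{Brugiapaglia2021} (which is stated for finite-dimensional bounded orthonormal systems with general weights) to the present infinite-dimensional setting, and tracking the exact power of the logarithm. Since every $\bm\nu\in\Lambda^{\mathsf{HCI}}_n$ vanishes outside $[n]$, the ambient infinite dimensionality causes no difficulty — the problem is effectively $n$-dimensional with $N$ basis functions — so the theorem applies verbatim after this reduction. The only delicate point is that we claim the logarithmic factor $\log^2(k/\delta)$ rather than $\log^2(k/\delta^2)$; as noted in the statement, this sharper form is obtained by inspecting the proof of \cite[Thm.~2.14]{Brugiapaglia2021} and is a minor improvement that does not affect the structure of the argument. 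Hence the proof reduces to: normalize by the intrinsic weights, apply the cited theorem over the finite set $\Lambda^{\mathsf{HCI}}_n$, and simplify the complexity term using $N \le \E n^{2+\log(n)/\log 2}$ to obtain the stated $\log^2(\E n)$ dependence.
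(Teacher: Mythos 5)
Your proposal follows essentially the same route as the paper's proof: verify that the intrinsic weights $u_{\bm\nu} = \|\Psi_{\bm\nu}\|_{L^\infty_\varrho(\cU)}$ satisfy the normalization required by \cite[Thm.~2.14]{Brugiapaglia2021}, apply that theorem to the finite set $\Lambda = \Lambda^{\mathsf{HCI}}_n$, and reduce the resulting $\log(\E N)$ factor to $\log^2(\E n)$ via the cardinality bound $N \le \E n^{2+\log(n)/\log 2}$. One small imprecision worth flagging: the logarithmic factor in the cited theorem is simply $\log(\E N)$, not a vaguer "covering/complexity quantity scaling like a power of $\log N$"; the concrete estimate $\log(\E N) \le 4\log^2(\E n)$ is what produces the stated bound, and the final step converting the theorem's exponential failure probability into $1-\epsilon$ via the $\log(2/\epsilon)$ term should be made explicit rather than absorbed into the theorem's statement.
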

\begin{proof}
We let $N = |\Lambda|$. Then
\bes{
   \nm{\Psi_{\bm{\nu}_j}}_{L^{\infty}_{\varrho}(\cU)} = u_{\bm{\nu}_j},
} 
and therefore the condition $\|\Psi_{\bnu_j} \|_{L^{\infty}(\cU)} \leq u_{\bnu_j}$ required by \cite[Thm.~2.14]{Brugiapaglia2021} holds.
Now, condition \eqref{m-cond-for-wRIP} gives
\begin{equation*}
\begin{split}
c_0  \cdot \delta^{-2} \cdot k \cdot  (\log( \E  N) \cdot \log^2(k/\delta)) 
&\leq c_0 \cdot \delta^{-2} \cdot k \cdot  (\log( \E  N) \cdot \log^2(k/\delta) +  \log (2/\epsilon))\\
&\leq 4 \cdot c_0 \cdot  \delta^{-2}   \cdot k (  \log^2(\E  n) \cdot \log^2(k/\delta )+ \log(2/\epsilon) ) \leq m
\end{split}
\end{equation*}
after relabelling the constant $c_0$ in \R{m-cond-for-wRIP} as $4 c_0$. Here, in the last inequality, we used the estimate $\log(\E N) \leq 4\log^2(\E  n)$, which comes from \eqref{N_bound} and some basic algebra:
\bes{
\log(\E N) \leq \log \left ( \E^2 n^{2 + \log(n)/\log(2)} \right ) \leq \left ( 2 + \frac{\log(n)}{\log(2)} \right ) \log(\E n) \leq 4 \log^2(\E n){.}
}
Therefore, from \cite[Thm.~2.14]{Brugiapaglia2021} we obtain that with probability at least $1-2\exp (-c_1 \delta^{-2}m/k)$ the matrix $\bm{A}$ defined in   \R{def-measMatrix} satisfies the weighted RIP over $\bbR$ of order $(k,\bm{u})$ with constant $\delta_{k,\bm{u}}$ for some $c_1 > 0$. To conclude the result, we notice that
\bes{
m \geq c_1 \delta^{-2} \cdot k \cdot \log(2/\epsilon) \quad \Rightarrow \quad 2 \exp(-c_1\delta^2 m / k) \leq \epsilon.
}
Hence, replacing $c_0$ by $\max \{ c_0 , c_1 \}$ in \R{m-cond-for-wRIP}, we deduce that the result holds with probability at least $1-\epsilon$, as required.
\end{proof}

\begin{remark}\label{rmk:casek}
As formulated above, Lemma \ref{l:LegMat_RIP_general} only considers the case $k \geq 1$. However,  for $k < 1 \leq  \min_{\bm{\nu} \in \mathcal{F}} u_{\bm{\nu}}^2$, the set of weighted $(k,\bm{u})$-sparse vectors is empty. Therefore the RIP of order $(k,\bm{u})$ is trivially satisfied in this case.
\end{remark}

The previous result will be used in the case of unknown anisotropy. In the case of known anisotropy, we require a different argument in order to obtain the better scaling with respect to $m$ in Theorem \ref{t:mainthm2}. As we see later, this case can be analyzed by asserting conditions under which the relevant measurement matrix $\bm{A}$ defined in \R{def-measMatrix-known} has the rNSP of `full' order $k = |S|_{\bm{u}}$, where $S$ is the index set used to construct $\bm{A}$.

\lem{
[Weighted rNSP for Chebyshev and Legendre polynomials in the `full' case]
\label{LS-wrNSP}
Let $\varrho$ be the tensor-product uniform or Chebyshev measure on $\cU = [-1,1]^{\bbN}$, $\{ \Psi_{\bm{\nu}} \}_{\bm{\nu} \in \cF}$ be the corresponding tensor-product orthonormal Legendre  or {Chebyshev} polynomial basis of $L^2_{\varrho}(\cU)$, $S \subset \cF$ and $\bm{y}_1,\ldots,\bm{y}_m$ be drawn independently and identically from the measure $\varrho$. Let $0 < \delta,\epsilon< 1$, $k = |S|_{\bm{u}}$, where $\bm{u}$ are the intrinsic weights \eqref{weights_def}, and suppose that
\be{
\label{m-cond-for-wrNSP-full}
m \geq ((1-\delta)\log (1-\delta)+ \delta)^{-1}\cdot k \cdot \log (k/\epsilon).
}
Then, with probability at least $1-\epsilon$, the matrix $\bm{A} \in \bbR^{m \times s}$, $s = |S|$, defined in \R{def-measMatrix-known} satisfies the weighted rNSP over $\bbR$ of order $(k,\bm{u})$ with constants $\rho = 0$ and $\gamma \leq (1-\delta)^{-1/2}$.
}
\prf{
Since $k = |S|_{\bm{u}}$ is the weighted cardinality of the `full' index set $S$, the wrNSP is equivalent to the condition
\bes{
\nm{\bm{x}}_{2} \leq \gamma \nm{\bm{A} \bm{x}}_2,\quad \forall \bm{x} \in \bbR^s.
}
Define the space $\cP = \spn \{ \Psi_{\bm{\nu}} : \bm{\nu} \in S \} \subset L^2_{\varrho}(\cU)$. Then by Parseval's identity, this inequality is equivalent to
\bes{
\nm{p}_{L^2_{\varrho}(\cU)} \leq \gamma \nm{p}_{\mathsf{disc}},\quad \forall p \in \cP,
}
where $\nm{p}_{\mathsf{disc}} = \sqrt{\frac1m \sum^{m}_{i=1} |p(\bm{y}_i) |^2}$. Thus, by \cite[\S 5.2]{Adcock2022} and \cite[Thm.\ 5.7]{Adcock2022}, we have $\gamma \leq (1-\delta)^{-1/2}$, provided
\be{
\label{m-for-disc-stab-const}
m \geq ((1-\delta)\log (1-\delta)+ \delta)^{-1} \cdot \kappa \cdot \log(s/\epsilon),
}
where $s = |S|$ and $\kappa = \kappa(\cP)$ is defined by (see \cite[Eqn.\ (5.15)]{Adcock2022}) 
\bes{
\kappa = \sup_{\bm{y} \in \cU} \sum_{\bm{\nu} \in S} | \Psi_{\bm{\nu}}(\bm{y} ) |^2.
}
Observe that $\kappa \leq \sum_{\bm{\nu} \in S} u^2_{\bm{\nu}} = |S|_{\bm{u}} = k$ and $s = |S| \leq |S|_{\bm{u}} = k$, since $\bm{u} \geq \bm{1}$. Hence \R{m-for-disc-stab-const} is implied by \R{m-cond-for-wrNSP-full}. This gives the result.
}

\section{Deep neural network approximation }\label{sec:DNN-approx}
 
In this section we detail the second key component of our proofs, which is the approximation of the orthonormal polynomials $\Psi_{\bm{\nu}}$ by DNNs.

\subsection{Approximate multiplication via DNNs}
{Our results are based on three different different DNN architectures (tanh, ReLU and RePU) that emulate the product of $n$ numbers. The first two follow from \cite{Ryck2021tanh} and  \cite{opschoor2019exponential}, respectively. The third is based on \cite{Li2019} and \cite{opschoor2019exponential}.}
\begin{lemma}[Approximate multiplication of $n$ numbers by ReLU and tanh DNNs] \label{lemma_dnn} Let $0 < \delta <1$, $n \in \bbN$ and consider constants $M_1,\ldots,M_n > 0$.
Then there exists a ReLU  ($j=1$) or a tanh ($j=0$) DNN $\Phi_{\delta}^j:  \prod^{n}_{i=1} [-M_i,M_i] \rightarrow \bbR $ satisfying
\begin{equation}\label{eq:ReLUapprox_delta}
\sup_{ |x_i|  \leq M_i } \left | \prod_{i=1}^n x_i - \Phi_{\delta}^j (\x) \right | \leq \delta,\qquad \text{where }\bm{x} = (x_i)^{n}_{i=1},
\end{equation}
for $j \in \lbrace 0,1 \rbrace$. The width and depth are bounded by 
\begin{align*}
     \mathrm{width}(\Phi_{\delta}^{1}  )  &\leq   c_{1,1}     \cdot n,   \\
    \mathrm{depth}( \Phi_{\delta}^{1}  )  & \leq c_{1,2}     \Big(1+ \log(n) \Big[ \log(n \delta^{-1})+ \log(M)   \Big]\Big),
  \end{align*}
in the ReLU case, where $M= \prod_{i}^n M_i$, and 
\begin{equation*}
  \mathrm{width}(\Phi_{\delta}^0  )   \leq c_{1,1}  \cdot n,  \quad
  \mathrm{depth}( \Phi_{\delta}^0  )    \leq c_{1,2}  \cdot  \log_2(n),
  \end{equation*}
  in the tanh case. Here $c_{j,1}$, $c_{j,2}$ are universal constants for $j \in \lbrace 0,1 \rbrace$.
\end{lemma}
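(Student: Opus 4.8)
The approach is to reduce the approximate multiplication of $n$ numbers to the approximate multiplication of two numbers, which is a well-known building block, and then assemble these using a balanced binary tree. First I would handle the two-input case. For tanh, one uses the standard fact (see \cite{Ryck2021tanh}) that there is a fixed-depth tanh network $\widetilde{\times}_{\delta'} : [-R,R]^2 \to \bbR$ with $|\widetilde{\times}_{\delta'}(a,b) - ab| \leq \delta'$, of width $\ord{1}$ and depth $\ord{1}$, where the hidden constants depend only (mildly, logarithmically) on $R$ and $\delta'$; for ReLU, one uses the classical Yarotsky-type construction (as in \cite[Prop.\ 2.5 and 2.6]{opschoor2019exponential}) giving a network of width $\ord{1}$ and depth $\ord{\log(R/\delta')}$ realizing the product on $[-R,R]^2$ to accuracy $\delta'$. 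Second I would organize the $n$-fold product as a binary tree of depth $\lceil \log_2 n \rceil$: at level $\ell$ one pairs up the $\ord{n/2^\ell}$ partial products computed so far and multiplies each pair. Concatenating the two-input multipliers in parallel at each level and stacking the levels gives a network whose width is $\ord{n}$ (at the bottom level we have $\ord{n}$ parallel blocks, each of $\ord{1}$ width) and whose depth is $\ord{\log n}$ times the depth of a single two-input multiplier.

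Next I would track the error propagation through the tree and the growth of the intermediate magnitudes, since this determines the accuracy $\delta'$ needed at each node and the bound $R$ on the inputs to each node. Write $P_i$ for the product over the $i$-th block at some level; inductively $|P_i| \leq \prod_{j \in \text{block}} M_j \leq M := \prod_{i=1}^n M_i$ (after a harmless rescaling so that $M_i \geq 1$, or by bounding each partial product by $\max\{1, \prod M_j\}$). So every node receives inputs bounded by $M$, and it suffices to take $R = M$ (plus an additive slack for the accumulated error, which stays $\leq \delta \leq 1$). For the error: if each two-input node is accurate to $\delta'$ and its true inputs are off by at most $e_{\text{in}}$ in each argument, then the output error is at most $\delta' + M \cdot 2 e_{\text{in}}$ (triangle inequality plus $|ab - a'b'| \leq |a||b-b'| + |b'||a-a'|$). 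Unrolling over the $\lceil \log_2 n \rceil$ levels, the total error is bounded by something like $\delta' \sum_{\ell=0}^{\lceil \log_2 n\rceil - 1} (2M)^{\ell} \leq \delta' (2M)^{\log_2 n} = \delta' n^{1 + \log_2 M}$ in the worst case; choosing $\delta'$ to be $\delta$ divided by this factor — i.e.\ $\delta' = \delta \cdot n^{-c}(1+M)^{-c\log n}$ for a suitable constant — yields \eqref{eq:ReLUapprox_delta}. In the ReLU case the depth of a single multiplier is $\ord{\log((1+M)/\delta')} = \ord{\log(n) + \log(M) + \log(\delta^{-1})}$, and multiplying by the $\ord{\log n}$ tree depth gives the stated bound $\ord{1 + \log(n)[\log(n\delta^{-1}) + \log M]}$; in the tanh case the single-multiplier depth is $\ord{1}$, so the total depth is just $\ord{\log_2 n}$, and in both cases the width is $\ord{n}$.

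The main obstacle is bookkeeping rather than conceptual: one must be careful that the accuracy $\delta'$ demanded at each node does not blow up faster than polynomially/quasi-polynomially in $n$ and $M$ when pushed through the geometric error-amplification factor $M$ per level, and that the identity/skip connections used to keep inputs available across layers of differing depth are counted correctly in the width. A secondary subtlety is the normalization $M_i \geq 1$: if some $M_i < 1$ the partial products can be small and the factor-$M$ amplification bound needs the replacement $M \leadsto \max\{1, \prod_i M_i\}$ throughout, or one simply rescales the inputs $x_i \mapsto x_i / M_i$ by absorbing constants into the affine maps and multiplies the final output by $M$, which is what I would do to keep the estimates clean. Modulo these routine adjustments, the construction and the width/depth bounds follow exactly as in \cite[Prop.\ 2.6]{opschoor2019exponential} for ReLU and the analogous tanh argument, yielding the claimed bounds with universal constants $c_{1,1}, c_{1,2}, c_{t,1}, c_{t,2}$.
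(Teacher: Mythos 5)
Your overall strategy — reduce to two-input multiplication, assemble via a balanced binary tree, invoke \cite[Prop.\ 2.6]{opschoor2019exponential} for ReLU and \cite{Ryck2021tanh} for tanh — is the same route the paper takes. But your detailed error-propagation estimate in the unrescaled setting contains a slip that would actually give a \emph{worse} depth bound than the one you claim, and the rescaling that you present as a cosmetic "cleaner option" is in fact the step that makes the stated bound come out.

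Concretely: you set $\delta' = \delta\, n^{-c}(1+M)^{-c\log n}$ and assert that the per-node depth is $\ord{\log((1+M)/\delta')} = \ord{\log n + \log M + \log(\delta^{-1})}$. But
\[
\log\!\big((1+M)/\delta'\big) \;=\; \log(1+M) + c\log n + c\,(\log n)\log(1+M) + \log(\delta^{-1}),
\]
which includes the cross term $(\log n)\log M$ that you drop. Multiplying by the $\ord{\log n}$ levels of the tree then yields an extra $\log^2(n)\log(M)$ contribution, which does not appear in the lemma's bound $\ord{1 + \log(n)[\log(n\delta^{-1}) + \log M]}$. The root cause is that with intermediate magnitudes allowed to reach $M$, the per-level error amplification is $\ord{M}$, not $\ord{1}$, and this geometric growth forces $\delta'$ to shrink with $(1+M)^{-\ord{\log n}}$, which in turn inflates every node's depth.

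The paper avoids this by rescaling at the outset: writing $\prod_i x_i = M \prod_i (x_i/M_i)$ and applying \cite[Prop.\ 2.6]{opschoor2019exponential} to the rescaled inputs in $[-1,1]^n$ with target accuracy $\tilde\delta = \delta/M$; the output is then multiplied by $M$ by modifying the final affine layer, and the rescaling $x_i \mapsto x_i/M_i$ is absorbed into the first affine layer, so the architecture is unchanged. Because the partial products now stay in $[-1,1]$, the error amplification per level is bounded by a constant, giving $\delta'_{\mathrm{node}} \gtrsim \tilde\delta/n$, per-node depth $\ord{\log(nM/\delta)}$, and total depth $\ord{\log(n)\log(nM/\delta)}$, matching the claimed bound. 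So the rescaling is not a bookkeeping nicety; it is the essential step that eliminates the $M$-dependent amplification. Your final paragraph essentially says this — and had you carried that version through as the main argument rather than as an aside, the proposal would match the paper's proof. For tanh, note also that the paper simply cites \cite[Lem.\ 3.8]{Ryck2021tanh} directly for the $n$-fold product, so no tree construction is needed in that case.
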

\begin{proof}
First, in the tanh case, let $N \geq \max_{i \in [n]} \{ M_i\}$ be such that $\x \in [-N,N]^n$. Then the result in the tanh case is a direct application of \cite[Lem.\ 3.8]{Ryck2021tanh}. We now focus on the ReLU case.
Let $\bm{x} = (x_i)^{n}_{i=1}$ be such that  $|x_i| \leq M_i$ for all $i \in [n]$. Now, write the multiplication of these terms  as
\begin{equation*}
\prod_{i=1}^{n} x_i = \left( \prod_{i=1}^{n} x_i/M_i \right)  \cdot \prod_{i=1}^{n} M_i  = \left( \prod_{i=1}^{n} x_i/M_i \right)  \cdot M.
\end{equation*}
Then, using  \cite[Prop.\ 2.6]{opschoor2019exponential}, for any $\widetilde{\delta}\leq  \delta$  there exists  a ReLU DNN  $\Phi_{\widetilde{\delta},1}$ such that
\begin{equation*}
\left|M \left( \prod_{i=1}^{n} \frac{x_i}{M_i} \right) -  M \cdot\Phi_{\widetilde{\delta},1}\left(\frac{x_1}{M_1},\ldots ,\frac{x_n}{M_n}\right) \right| \leq M\widetilde{\delta}.
\end{equation*}
We now set $\widetilde{\delta} = \delta/M$. Since the composition  of   affine maps is an affine map, 
 we can define a ReLU DNN of the same architecture as 
\begin{equation}
\widetilde{\Phi}_{\delta,M}(x_1,\ldots,x_n)=  M \cdot\Phi_{ {\delta}/M,1}\left(\frac{x_1}{M_1},\ldots ,\frac{x_n}{M_n}\right).
\end{equation}
This implies that
\begin{equation*}
\left|  \prod_{i=1}^{n} {x_i} - \widetilde{\Phi}_{\delta,M}(x_1,\ldots,x_n) \right| \leq  {\delta}.
\end{equation*}
Taking the supremum over $|x_i| \leq M_i$ we obtain \eqref{eq:ReLUapprox_delta}. We now bound the width and depth. From \cite[Prop.\ 2.6]{opschoor2019exponential} notice that there exists  a  constants $c_2>0$ such that
\begin{align*}
\mathrm{depth}(\widetilde{\Phi}_{\delta,M})  \leq  \mathrm{depth}(\Phi_{\delta/M,1})  
&\leq c_2 \left( 1 + \log(n) \log(n M/\delta) \right).
\end{align*}
Now, inspecting the proof of \cite[Prop.\ 2.6]{opschoor2019exponential}, which is based on \cite[Sec.\ 3.2]{Schwab2017} and constructs the DNN for multiplying $n$ numbers as a binary tree, we observe that the product of two numbers involves a maximum of $12$ nodes per layer. Thus, for the product of $n$ numbers, the width is bounded by   
\begin{equation*}
\mathrm{width}(\widetilde{\Phi}_{\delta,M})  \leq  12 \left \lceil \frac{n}{2} \right \rceil.
\end{equation*}
This completes the proof.
\end{proof}
The following lemma asserts the existence of a RePU DNN to calculate the multiplication of two numbers. Its proof can be found in \cite[Lem.\ 2.1]{Li2019} (see also \cite[Appx.\ A]{opschoor2019exponential} and \cite[Thm.\ 2.5]{Li2019}).

\begin{lemma}[Exact multiplication of two numbers by a RePU DNN]\label{l:multi_twoRepU}
For $\ell=2,3,\ldots$, there exists  a RePU neural network $ \bar{\Phi}^{{\ell}}:\bbR^2 \rightarrow \bbR$ such that
\begin{equation*}
\bar{\Phi}^{{\ell}}(x_1,x_2) = x_1 x_2 ,\quad \forall x_1,x_2 \in \bbR.
\end{equation*}
The  width  and depth of this DNN are 
\begin{equation*}
\mathrm{width}(\bar{\Phi}^{{\ell}}) = c_{\ell}, \quad \mathrm{depth}(\bar{\Phi}^{{\ell}}) =1,
\end{equation*}
where  $c_{\ell}$ is a constant depending on $\ell$.
\end{lemma}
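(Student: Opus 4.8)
The plan is to construct $\bar{\Phi}^{\ell}$ by writing $x_1 x_2$ as a fixed finite linear combination of $\ell$-th powers of affine functions of $(x_1,x_2)$, and then realising each such power exactly with two RePU units in a single hidden layer. The starting point is the elementary identity
\[
z^{\ell} = \sigma_{\ell}(z) + (-1)^{\ell}\,\sigma_{\ell}(-z),\qquad \forall z \in \bbR,\ \ell = 2,3,\ldots,
\]
which I would verify by splitting into the cases $z \geq 0$ and $z \leq 0$ (in each case exactly one of $\sigma_{\ell}(z)$, $\sigma_{\ell}(-z)$ vanishes). Consequently, for any affine map $g:\bbR^2 \to \bbR$, $g(x_1,x_2) = a_0 + a_1 x_1 + a_2 x_2$, the map $(x_1,x_2) \mapsto g(x_1,x_2)^{\ell}$ is computed exactly by a depth-$1$ RePU network with two hidden units: the two hidden affine maps are $g$ and $-g$, followed by $\sigma_\ell$, and the output affine map combines the results with weights $1$ and $(-1)^{\ell}$.

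Next I would invoke (and prove in a self-contained way) the fact that the family $\{\,(a_0 + a_1 x_1 + a_2 x_2)^{\ell} : (a_0,a_1,a_2) \in \bbR^3\,\}$ spans the whole space $\mathcal{P}_{\ell}(\bbR^2)$ of polynomials of degree at most $\ell$ in two variables. This follows from a short duality argument: if a linear functional $\phi$ on $\mathcal{P}_{\ell}(\bbR^2)$ annihilates every such power, then $(a_0,a_1,a_2) \mapsto \phi\big((a_0 + a_1 x_1 + a_2 x_2)^{\ell}\big)$ is the zero homogeneous polynomial of degree $\ell$ in $(a_0,a_1,a_2)$; expanding via the multinomial theorem and equating coefficients of $a_0^{\ell-j-k} a_1^{j} a_2^{k}$ forces $\phi(x_1^{j} x_2^{k}) = 0$ for all $j+k \leq \ell$, i.e. $\phi = 0$. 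Since $\ell \geq 2$, the monomial $x_1 x_2$ lies in $\mathcal{P}_{\ell}(\bbR^2)$, so there exist finitely many affine maps $g_1,\ldots,g_r$ and coefficients $c_1,\ldots,c_r \in \bbR$, with $r \leq \binom{\ell+2}{2}$, such that $x_1 x_2 = \sum_{i=1}^{r} c_i\, g_i(x_1,x_2)^{\ell}$.

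Finally I would assemble the network: place the $2r$ hidden units $\sigma_{\ell}(\pm g_i(x_1,x_2))$, $i = 1,\ldots,r$, in one hidden layer, and let the output affine map return $\sum_{i=1}^{r} c_i\big(\sigma_{\ell}(g_i) + (-1)^{\ell}\sigma_{\ell}(-g_i)\big)$; by the identity above this equals $\sum_{i=1}^{r} c_i g_i^{\ell} = x_1 x_2$ for every $(x_1,x_2) \in \bbR^2$. The resulting DNN has $\mathrm{depth}(\bar{\Phi}^{\ell}) = 1$ and $\mathrm{width}(\bar{\Phi}^{\ell}) \leq 2r \leq (\ell+1)(\ell+2) =: c_{\ell}$, which gives the claimed bounds. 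The only mildly nontrivial ingredient is the spanning statement for $\ell$-th powers of affine forms, and the main obstacle is simply to present that cleanly; I would give the self-contained coefficient-matching proof above rather than appealing to apolarity theory, and note that one could even avoid a full dimension count since it suffices that the single polynomial $x_1 x_2$ be representable. Everything else — the pointwise identity and the bookkeeping of widths and depths — is routine.
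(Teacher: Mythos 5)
Your proof is correct, but it differs from the paper's in an important sense: the paper does not prove Lemma~\ref{l:multi_twoRepU} itself; it simply cites \cite{Li2019} (Lem.\ 2.1) and \cite{opschoor2019exponential} (Appendix A). Those references give explicit constructions: they first realise $z \mapsto z^2$ exactly as a fixed linear combination of a small number of terms of the form $\sigma_\ell(\alpha z + \beta)$, and then obtain $x_1 x_2$ from the polarization identity $x_1 x_2 = \frac{1}{4}\bigl((x_1+x_2)^2 - (x_1-x_2)^2\bigr)$, which yields concrete weights and a width that grows linearly in $\ell$. Your route is more abstract and avoids case analysis: from the pointwise identity $z^\ell = \sigma_\ell(z) + (-1)^\ell \sigma_\ell(-z)$ you realise $g^\ell$ exactly for any affine $g$ with two hidden units, and then you invoke the classical fact that $\ell$-th powers of affine forms span the space of bivariate polynomials of degree at most $\ell$, which you prove cleanly via the multinomial expansion and a duality argument. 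Finite-dimensionality of that space then gives a finite representation $x_1 x_2 = \sum_i c_i g_i^\ell$ and hence a one-hidden-layer RePU network of width at most $(\ell+1)(\ell+2)$. This is existential rather than constructive and gives a width bound that is quadratic rather than linear in $\ell$, but for the purposes of the lemma --- which only demands that the width be \emph{some} constant $c_\ell$ --- it is entirely sufficient, and the argument is self-contained and parity-agnostic. Both approaches are valid; yours would serve as a clean in-paper proof where the original simply defers to the literature.
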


We now use similar arguments to those in \cite[Sec.\ 2.3]{opschoor2019exponential}, and notice that the previous lemma implies the existence of RePU neural network for multiplying $n$ different numbers. The next lemma and its proof are based on \cite[Prop.\ 2.6]{opschoor2019exponential}, which, in turn, employs techniques from \cite[Prop.\ 3.3]{Schwab2017}. Basically, the idea here is to construct a neural network $\Phi^{{\ell}}$ as a binary tree of $\bar{\Phi}^{{\ell}}$-networks using Lemma \ref{l:multi_twoRepU}. Unlike in the ReLU and tanh cases (see Lemma \ref{lemma_dnn}), the resulting multiplication is exact and we do not require the assumption $|x_i| \leq M_i$, $i \in [n]$.

\begin{lemma}[Exact multiplication of $n$ numbers by RePU] \label{l:multi_nRepU} 
For $\ell=2,3,\ldots$, there exists a RePU neural network   $\Phi^{{\ell}}:  \bbR^n \rightarrow \bbR $
such that 
\begin{equation}\label{eq:approx_RePu}
  \Phi^{{\ell}} (\x)  =\prod_{i=1}^n x_i ,\quad \forall \x=(x_i)_{i=1}^n \in \bbR^n.
\end{equation}
The width and depth are bounded by
\begin{equation*}
\mathrm{width}( \Phi^{{\ell}} ) \leq c_{ {\ell},1}  \cdot n, \quad \mathrm{depth}( \Phi^{{\ell}})  \leq c_{2}\log_2(n), 
\end{equation*}
where  $c_{{\ell},1}$ and $c_{2}$ are positive constants and only $c_{{\ell},1}$ depends on $\ell$.
\end{lemma}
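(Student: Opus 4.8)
The plan is to build $\Phi^{\ell}$ by arranging copies of the two-input multiplication network $\bar{\Phi}^{\ell}$ from Lemma \ref{l:multi_twoRepU} in a balanced binary tree, exactly as in the ReLU construction of \cite[Prop.\ 2.6]{opschoor2019exponential} (itself following \cite[Prop.\ 3.3]{Schwab2017}). First I would reduce to the case where $n = 2^q$ for some integer $q = \lceil \log_2(n) \rceil$: if $n$ is not a power of two, pad the input vector with extra coordinates set (via the affine maps) to the constant $1$, so that multiplying the padded vector returns the same product $\prod_{i=1}^n x_i$. This only increases the input count by at most a factor of two, which is absorbed into the constant.

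Next I would describe the tree itself. At the first level, pair up the $2^q$ inputs into $2^{q-1}$ disjoint pairs and apply $\bar{\Phi}^{\ell}$ to each pair in parallel; by Lemma \ref{l:multi_twoRepU} each such block has depth $1$ and width $c_\ell$, and exactly computes the pairwise product. Iterating, at level $t$ we have $2^{q-t}$ partial products, and we again pair them and apply $\bar{\Phi}^{\ell}$ in parallel. After $q$ levels we are left with a single output equal to $\prod_{i=1}^n x_i$, so \eqref{eq:approx_RePu} holds exactly and with no boundedness hypothesis on the $x_i$ (this is the key qualitative improvement over the ReLU/tanh case, inherited from the exactness in Lemma \ref{l:multi_twoRepU}). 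One subtlety to address is that "parallel" concatenation of subnetworks of the same depth is straightforward, but if at some stage subnetworks had differing depths one would need to insert identity layers; here every $\bar{\Phi}^{\ell}$ block has depth $1$, so all branches stay synchronized and no padding-in-depth is needed. Composing the affine input/output maps of consecutive levels (a composition of affine maps is affine) yields a genuine RePU network of the form \eqref{Phi_NN_layers}.

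For the size bounds: the depth is the number of levels, $\mathrm{depth}(\Phi^{\ell}) = q = \lceil \log_2(n) \rceil \leq c_2 \log_2(n)$ for a universal constant $c_2$ (using $n \geq 2$; the $n=1$ case is trivial). For the width, the widest layer occurs at the first level, where $2^{q-1} \leq n$ blocks of width $c_\ell$ run in parallel, plus the pass-through wiring, giving $\mathrm{width}(\Phi^{\ell}) \leq c_{\ell,1} \cdot n$ with $c_{\ell,1}$ depending only on $\ell$. Every later level is strictly narrower, so this bound dominates.

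I do not expect any serious obstacle: the construction is entirely standard tree-of-networks bookkeeping, and the only points requiring a word of care are the padding of the input to a power of two (and checking the padded product is unchanged) and the observation that the equal-depth building blocks make the parallel composition clean. The real content — exactness of binary multiplication without a bounded-domain assumption — is already packaged in Lemma \ref{l:multi_twoRepU}, so the present lemma is essentially its $n$-fold iterate.
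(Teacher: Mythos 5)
Your proposal is correct and follows essentially the same route as the paper's proof: pad the input to the next power of two via an affine map sending the extra coordinates to $1$, arrange depth-$1$ exact-multiplication blocks $\bar{\Phi}^{\ell}$ from Lemma~\ref{l:multi_twoRepU} in a balanced binary tree (the paper's maps $R_l$ and composition $\cR^k$), and read off $\mathrm{depth} \lesssim \log_2(n)$ and $\mathrm{width} \lesssim_\ell n$. No gaps, and the observations you flag (exactness removing the boundedness hypothesis, equal-depth branches avoiding depth-padding) match the paper's reasoning.
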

\begin{proof}
 Let $\widetilde{n}:=  \min \{ 2^k: k \in \bbN, 2^k \geq n \}$. For every $\x=(x_1,\ldots,x_n) \in \bbR^n$ we  define the  vector   $\widetilde{\bm{x}}=(x_1,\ldots,x_n,x_{n+1}, \ldots, x_{\widetilde{n}}) \in \bbR^{\widetilde{n}}$, where $x_{n+1}= \ldots= x_{\widetilde{n}}= 1 $.  Observe that the map $\bm{x} \mapsto \widetilde{\bm{x}}$ is affine and, hence, can be implemented by a suitable choice of weights and biases in the first layer.
Arguing as in \cite[Prop.\ 2.6]{opschoor2019exponential}, let $l \in \bbN$, consider vectors   in $ \bbR^{2l}$ and define the mapping 
\begin{equation*}
R_l(z_1,\ldots, z_{2l}):= \left( \bar{\Phi}^{ {\ell}}(z_1,z_2),\ldots,\bar{\Phi}^{ {\ell}}(z_{2l-1},z_{2l})\right) \in \bbR^{l},
\end{equation*}
where $\bar{\Phi}^\ell$ is as in Lemma~\ref{l:multi_twoRepU}.  Now,  for $k \in \bbN$  we consider the composition 
\begin{equation}\label{eq:def_R}
 \cR^{k}:=  R_{1} \circ R_{2} \circ R_{2^2} \circ \ldots \circ R_{2^{k-1}}.
 \end{equation} 
 Keeping this in mind we now define ${\Phi}^{  {\ell}}: \bbR^n \rightarrow \bbR$ by
\begin{equation*}
{\Phi}^{ {\ell}} (x_1,\ldots, x_n):= \cR^{\log_2(\widetilde{n})}(x_1,\ldots,x_{\widetilde{n}}).
\end{equation*}
We immediately deduce \eqref{eq:approx_RePu}. 
We now estimate the depth and width of ${\Phi}^{ {\ell}} $. First, note that $\widetilde{n}\leq 2n$. Then, following  \cite[Prop.\ 2.6]{opschoor2019exponential} 
and using Lemma \ref{l:multi_twoRepU}, there exists a positive constant $c_2$  such that 
\begin{equation*}
\mathrm{depth}({\Phi}^{ {\ell}})  \leq c_2   \log_2(n).
\end{equation*}
Moreover, from   Lemma \ref{l:multi_twoRepU} and \cite[Lem.\ 2.5]{Li2019},  the construction of the neural network implies the existence of  a constant $c_{ {\ell}}  >0$   depending on $\ell$ such that
\begin{equation*}
\mathrm{width}({\Phi}^{ {\ell}}) \leq c_{ {\ell}} \cdot  n.  
\end{equation*}
This gives the result. 
\end{proof}

\subsection{Emulation of orthogonal polynomials via DNNs}
{Having shown that neural networks can emulate the multiplication of $n$ numbers, we are now able to emulate orthonormal polynomials. We do this by appealing to the fundamental theorem of algebra, which allows us to represent any such polynomial as a product of its roots. This approach is based on \cite{Daws2019b}, which considered only ReLU DNNs and Legendre polynomials. This approach differs from other constructions (see, e.g., \cite[Prop.\ 2.13]{opschoor2019exponential}) which first emulate univariate orthogonal polynomials and then use the previously derived multiplication networks.}

\begin{theorem}
\label{Prop_Ex_NN}  Let $\Lambda \subset \cF$ be a finite multi-index set, $m(\Lambda) = \max_{\bnu \in \Lambda} \|\bnu\|_{1}$ and $\Theta \subset \bbN$, $|\Theta|=n$, satisfy
\bes{
\bigcup_{\bm{\nu} \in \Lambda} \supp(\bm{\nu}) \subseteq \Theta.
}
Let  $\{\Psi_{\bnu}\}_{\bnu \in \cF} \subset L^2_{\varrho}(\cU)$ be the orthonormal Legendre or Chebyshev polynomial basis of $L^2_{\varrho}(\cU)$. Then for  every $0< \delta <1$ there exists a ReLU  ($j=1$), RePU ($j= {\ell}$) or tanh ($j=0$) DNN $\Phi_{\Lambda,\delta}^{j}: \bbR^n \rightarrow \bbR^{|\Lambda|}$,  such that, if $\Phi_{\Lambda,\delta}^j(\bm{z})= (\Phi_{\bnu,\delta}^j(\bm{z}))_{\bnu \in \Lambda}$, $\bm{z}=(z_j)_{j \in \Theta} \in \bbR^n$  and $\cT_{\Theta}$ is as in \eqref{def:cT}, then  
\begin{equation*}
\|\Psi_{\bnu} -\Phi_{\bnu, \delta}^j \circ  \cT_{\Theta}  \|_{L^\infty(\cU)} \leq \delta, \quad \forall \bnu \in \Lambda, \quad  j \in \lbrace 0,1,{\ell} \rbrace.
\end{equation*}
 In the case of the ReLU ($j = 1$) activation function,  the width and depth of this network satisfy  
  \begin{align*}
   \mathrm{width}(\Phi_{\bm{\nu}}^{ 1}  ) & \leq   c_{ 1,1}   \cdot   |\Lambda| \cdot  m(\Lambda),\\
    \mathrm{depth}( \Phi_{\bm{\nu}}^{ 1}  )  & \leq c_{ 1,2}    \cdot   \Big(1+ \log(m(\Lambda))  \cdot \Big[ \log(m(\Lambda) \delta^{-1})+m(\Lambda) +n\Big]\Big).
\end{align*}
 In the case of the RePU ($j =  {\ell}$) or tanh ($j = 0$) activation function,  the width and depth of this network satisfy
  \begin{align*}
  \mathrm{width}(\Phi_{\Lambda,\delta}^j  )  \leq c_{j,1}  \cdot |\Lambda|  \cdot m(\Lambda),  \quad 
  \mathrm{depth}(\Phi_{\Lambda,\delta}^j  ) \leq c_{j,2}  \cdot    \log_2(m(\Lambda)).
\end{align*}
Here $c_{j,1}$, $c_{j,2}$ are universal constants in the ReLU and tanh cases, with only $c_{{\ell},1}$ depending on $\ell = 2,3,\ldots$ in the RePU case.
\end{theorem}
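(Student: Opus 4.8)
The plan is to follow the tensorization strategy of \cite[Prop.\ 2.13]{opschoor2019exponential}: first emulate each univariate orthonormal polynomial, then multiply the finitely many active univariate factors together, and finally stack these constructions to obtain a single network with $|\Lambda|$ outputs. The starting point is the factorization $\Psi_{\bnu}(\bm{y}) = \prod_{k \in \supp(\bnu)} \psi_{\nu_k}(y_k)$, where $\psi_j$ is the $j$th orthonormal Legendre or Chebyshev polynomial on $[-1,1]$ and, crucially, $\|\bnu\|_0 \le \|\bnu\|_1 \le m(\Lambda)$, so the outer product is over at most $m(\Lambda)$ factors. Since $\supp(\bnu) \subseteq \Theta$ for every $\bnu \in \Lambda$, precomposing with $\cT_{\Theta}$ merely exposes all the coordinates the network needs, and the ambient index $n = |\Theta|$ enters only through an affine selection map in the first layer.

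For the univariate step I would use the monomial expansion $\psi_j(y) = \sum_{i=0}^j a_{j,i} y^i$, noting that in the orthonormal normalization $\sum_i |a_{j,i}|$ grows at most geometrically in $j$, hence at most like $C^{m(\Lambda)}$ for an absolute constant $C$. For the RePU activation, Lemma~\ref{l:multi_nRepU} gives \emph{exact} products, so all monomials up to degree $m(\Lambda)$, and hence each $\psi_j$, are realized exactly by a balanced product tree of depth $\lesssim \log_2(m(\Lambda))$ and width $\lesssim m(\Lambda)$; for the tanh activation, the same tree built from the tanh product network of Lemma~\ref{lemma_dnn} (equivalently \cite[Lem.\ 3.8]{Ryck2021tanh}) has the same depth with the tolerance absorbed into the constants; for the ReLU activation, Lemma~\ref{lemma_dnn} (with $M_i = 1$ on $[-1,1]$) produces each monomial to an internal accuracy $\delta'$, and choosing $\delta' \sim \delta\, C^{-m(\Lambda)} m(\Lambda)^{-1}$ before taking the affine combination yields $\|\psi_j - \widehat\psi_j\|_{L^\infty([-1,1])} \le \delta/2$ at depth $\lesssim \log(m(\Lambda))\big[\log(m(\Lambda)\delta^{-1}) + m(\Lambda)\big]$, the additive $m(\Lambda)$ being the logarithm of the coefficient growth.

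The third step is tensorization and parallelization. For each $\bnu$ I would feed the univariate outputs $\widehat\psi_{\nu_k}(y_k)$, which are bounded by $M_k := \|\psi_{\nu_k}\|_{L^\infty([-1,1])} = u_{\nu_k}$, into a product-of-$\|\bnu\|_0$-numbers network (Lemma~\ref{l:multi_nRepU} for RePU, Lemma~\ref{lemma_dnn} for ReLU and tanh, the latter with $M = \prod_k M_k$, whose logarithm is $\lesssim n$ in the Chebyshev case and $\lesssim n\log(m(\Lambda))$ in the Legendre case, giving the extra $+n$ in the ReLU depth). A telescoping estimate, iterating $|xy - \widehat x\widehat y| \le |x|\,|y-\widehat y| + |\widehat y|\,|x - \widehat x|$ up the product tree and using the uniform bounds $\|\psi_j\|_{L^\infty} = u_j$, shows that taking the univariate and product-network tolerances $\lesssim \delta\,(\prod_k M_k)^{-1} m(\Lambda)^{-1}$ forces $\|\Psi_{\bnu} - \Phi_{\bnu,\delta}^j\circ\cT_{\Theta}\|_{L^\infty(\cU)} \le \delta$. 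Stacking the $|\Lambda|$ sub-networks while sharing the monomial layers, and bounding the total size exactly as in \cite[Prop.\ 2.13]{opschoor2019exponential}, gives $\mathrm{width} \lesssim |\Lambda|\,m(\Lambda)$ and the stated depths.

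I expect the main obstacle to be the ReLU error bookkeeping: one must propagate the univariate errors through the multiplication tree without blow-up, which forces the internal accuracies to be scaled down by both the geometric coefficient growth $C^{m(\Lambda)}$ and the product $\prod_k u_{\nu_k}$ of the $L^\infty$-bounds, and one then has to check that taking logarithms of these quantities reproduces exactly the additive $m(\Lambda) + n$ in the depth bound. The RePU and tanh cases are comparatively clean, since there multiplication is either exact (RePU) or $\delta$-free by construction (tanh), so only the $\log_2(m(\Lambda))$-depth product tree contributes.
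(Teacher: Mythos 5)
Your proposal is correct in spirit but takes a genuinely different route from the paper's. You expand each univariate orthonormal polynomial in the monomial basis and build a two-stage construction: (i) networks for the monomials and an affine combination to synthesize each $\widehat\psi_{\nu_k}$, and then (ii) a second product network to tensorize over $\supp(\bnu)$. The paper instead uses the fundamental theorem of algebra to factor $\Psi_{\bnu}$ directly into $\|\bnu\|_1 \le m(\Lambda)$ scaled linear forms $(y_i - r^{(\nu_i)}_j)$ (with the normalizing constant distributed as a $\nu_i$-th root across the factors), implements those linear forms as a single affine first layer $\cA_{\bnu}$, and then applies \emph{one} product-of-$m(\Lambda)$-numbers network. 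This one-shot factorization avoids the monomial coefficient growth $C^{m(\Lambda)}$ entirely, so there is only a single error source to propagate, and the bookkeeping reduces to bounding $\log M$ with $M = \prod_k M_k$. Your route can work, but the nested telescoping through two multiplication trees is precisely the place where constants compound; the paper's factorization sidesteps it.

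Two concrete points you should fix. First, your Legendre magnitude estimate "$\log(\prod_k u_{\nu_k}) \lesssim n\log(m(\Lambda))$" is not tight and, plugged into the depth formula, would give $\log(m(\Lambda))\cdot n\log(m(\Lambda))$, which overshoots the stated $\log(m(\Lambda))\big[m(\Lambda)+n\big]$ by a log factor when $n$ is comparable to $m(\Lambda)$. The correct estimate uses the $\ell^1$-constraint: since $\log(2x+1) \le 2x$, one has $\log\prod_{k\in\supp(\bnu)} \sqrt{2\nu_k+1} = \tfrac12\sum_k \log(2\nu_k+1) \le \|\bnu\|_1 \le m(\Lambda)$, so the Legendre contribution is $\lesssim m(\Lambda)$, not $n\log(m(\Lambda))$. (The $+n$ in the theorem's depth comes from the specific constants in the paper's $M$, namely $(e/\pi)^n(3/2)^{n/2}$, and from the Chebyshev normalization $2^{\|\bnu\|_0/2}$ with $\|\bnu\|_0 \le n$.) Second, you should verify explicitly that the second product tree over $\|\bnu\|_0$ inputs contributes depth $\lesssim \log(\|\bnu\|_0)[\cdots] \le \log(m(\Lambda))[\cdots]$ rather than $\log(n)[\cdots]$; this holds because $\|\bnu\|_0 \le \|\bnu\|_1 \le m(\Lambda)$, but it is worth stating since $n$ is otherwise uncontrolled relative to $m(\Lambda)$. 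With those two repairs the proposal goes through, at the cost of a heavier error-propagation argument than the paper's.
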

 \begin{proof} We divide the proof into two cases.
 
\pbk
\textit{Case 1: Legendre polynomials.} The univariate Legendre polynomials $\{ \Psi_{\nu} \}_{\nu \in \bbN_0}$ are given by (see, e.g., \cite[Sec.\ 2.2.2]{Adcock2022})
\begin{equation}
\label{Leg-def}
P_{\nu}(y)= \dfrac{1}{2^{\nu} \nu!} \dfrac{\D^{\nu}}{\D y^{\nu}} (y^2-1)^{\nu}  
\quad \text{ and } \quad
 \Psi_{\nu}(y) =  \sqrt{2 \nu+1}  P_{\nu}(y), \quad \forall \nu \in \bbN_0.
\end{equation}
Hence, their multivariate counterparts can be written as 
\begin{equation}\label{eq_poly_Leg}
  \Psi_{\bnu}(\y) =   \prod_{i \in \supp{(\bnu})} \sqrt{2 \nu_i+1}  P_{\nu_i}(y_i),\quad \forall \y \in \cU,\ \bm{\nu} \in \cF,
 \end{equation}
where $\supp(\bnu)$ is as in \R{l0norm}.
Using the fundamental theorem of algebra we may write
 \begin{align}\label{eq:poly_Leg_facts}
  \Psi_{\bnu}(\y) &=    \prod_{i \in \supp (\bnu)} \prod^{\nu_i}_{j=1} \left ( \sqrt{2 \nu_i+1}  d_{\nu_i} \right )^{1/\nu_i}  (y_i - r^{(\nu_i)}_j), \quad \forall {\y} \in  {\cU}, \quad   \bnu\in \cF.
 \end{align}
Here,   $\lbrace {r}_j^{(\nu_i)}\rbrace_{j=1}^{\nu_i}$ are the $\nu_i$ roots of the polynomial $P_{\nu_i}$  and $d_{\nu_i}$ is a scaling factor. Using  \R{Leg-def}, we see that the leading coefficient of $P_{\nu}$ is  $d_{\nu} =  2^{-\nu} \frac{(2\nu)!}{(\nu!)^2}$. Then, by Stirling's formula for factorials $ \sqrt{2 \pi} n^{n+1/2}\E ^{-n} \leq n! \leq \E  n ^{n+1/2}\E ^{-n}$,  this coefficient satisfies
\[
d_0=1 \quad \text{ and } \quad d_{\nu} \leq  \dfrac{\E 2^{\nu}}{\pi \sqrt{2 \nu}}, \quad \nu \in \bbN.
\]
Next,  for $\bnu \in \Lambda$, we define the affine map $\cA_{\bnu}: \bbR^n  \rightarrow \bbR^{\nm{\bnu}_1}$, $\cA_{\bnu} (\bm{y}) = (a_{i,j}(\y))_{i\in \supp(\bnu),j \in [\nu_i]}$, by
\begin{equation}\label{eq:A_y_nu}
a_{i,j}(\y)=  \left ( \sqrt{2 \nu_i+1}  d_{\nu_i} \right )^{1/\nu_i} (y_i - r^{(\nu_i)}_j) ,\quad i \in \supp(\bnu), \ j \in [\nu_i], \qquad \forall \y \in \cU.
\end{equation}
Given $\bnu \in \Lambda$, this allows us to define the product of the $ \nm{\bnu}_1$ terms \R{eq:poly_Leg_facts} that comprise $\Psi_{\bnu}$. It is useful, however, to make the number of factors in this multiplication constant for all $\bnu \in \Lambda$. To this end, we now
 redefine the affine map  $\cA_{\bnu}: \bbR^n  \rightarrow \bbR^{m(\Lambda)}$ by padding the output vector with  $m(\Lambda) - \nm{\bm{\nu}}_1$ terms equal to one. 

Our aim now is to apply Lemma \ref{lemma_dnn} to show that there exists  a DNN that approximates the multiplication of the factors in $\cA_{\bnu}(\y)$. 
To do so, we need to identify bounds $M_k$ for each of the terms in the output vector $\cA_{\bm{\nu}}(\bm{y})$, with $k \in [m(\Lambda)]$. First, notice that the roots of the Legendre polynomials are in $[-1,1]$ for \eqref{eq:poly_Leg_facts}. Then each factor in  \eqref{eq:A_y_nu} is bounded  by
\[
\widetilde{M}_{i}:= 2 (\sqrt{2 \nu_{i}+1})^{1/\nu_{i}} \left(\dfrac{\E  2^{\nu_i}}{\pi \sqrt{2 \nu_i}}\right)^{1/\nu_i}, \quad i \in \supp(\bnu),\  j \in [\nu_i].
\]
Clearly, the other terms in $\cA_{\bm{\nu}}(\bm{y})$ are bounded by one. Therefore, since $|\supp(\bnu)| \leq |\Theta|=n$  we get
\begin{equation*}
\begin{split}
M =  \prod_{k=1}^{m(\Lambda)} M_{k} &= \left(\prod_{i \in \supp(\bnu)} \prod_{j =1}^{ \nu_i} \widetilde{M}_{i} \right) \cdot \left( \prod_{k = 1}^{m(\Lambda)-\|\bnu\|_1} 1 \right) 
 \\
 & = \prod_{i \in \supp (\bnu)} \prod^{\nu_i}_{j=1} 2 (\sqrt{2 \nu_{i}+1})^{1/\nu_{i}} \left(\dfrac{\E  2^{\nu_i}}{\pi \sqrt{2 \nu_i}}\right)^{1/\nu_i} \\ 
&\leq  2^{2\nm{\bm{\nu}}_1} \left(\dfrac{\E }{\pi}\right)^n \prod_{i \in \supp (\bnu)}  \sqrt{ \dfrac{2\nu_{i}+1}{2 \nu_i} } 
\\
 & \leq       2^{2 m(\Lambda)}\left( \dfrac{\E }{\pi}\right)^n\left( \dfrac{3}{2}\right)^{n/2} .
\end{split}
\end{equation*}
This defines the multiplication  of $m(\Lambda)$ factors. Using Lemma \ref{lemma_dnn} (or Lemma \ref{l:multi_nRepU} in the case of RePU, in which case the previous calculation is unnecessary), for any $0 < \delta <1$ there exists a ReLU ($j=1$), a RePU ($j=  {\ell}$) or a tanh ($j=0$) DNN  $\Phi_{\delta,M,\bnu}^{j}$ that approximates  the multiplication of the $m(\Lambda)$ factors defining $\cA_{\bnu}$.   Thus, we define the DNN $\Phi_{\bm{\nu}}^{j} :  \bbR^n \rightarrow \bbR$  by
\[
\Phi_{\bm{\nu}}^j = \Phi_{\delta,M,\bnu}^j \circ {\cA}_{\bnu}, \quad j \in \lbrace 0,1,{\ell} \rbrace .
\]
By construction, we have
\[
\nmu{\Psi_{\bm{\nu}} - \Phi_{\bm{\nu}}^j \circ  \cT_{\Theta} }_{L^{\infty}_{\varrho}(\cU)} \leq \delta, \quad j \in \lbrace 0,1,{\ell}\rbrace, \quad \forall \bnu \in \Lambda.
\]
Applying the bound for $M$, 
and some basic algebra we obtain the respective bounds for the width and depth of each $\Phi_{\bm{\nu}}^j$. Specifically,
  \begin{align*}
   \mathrm{width}(\Phi_{\bm{\nu}}^{ 1}  ) & \leq   c_{ 1,1}    \cdot m(\Lambda),\\
    \mathrm{depth}( \Phi_{\bm{\nu}}^{ 1}  )  & \leq c_{1,2}    \cdot   \Big(1+ \log(m(\Lambda))  \cdot\Big[ \log(m(\Lambda) \delta^{-1})+ m(\Lambda)+n \Big]\Big){,}
\end{align*}
in the ReLU case, and
\begin{equation*}
    \mathrm{width}(\Phi_{\bm{\nu}}^j  )   \leq c_{j,1}  \cdot m(\Lambda), \qquad
    \mathrm{depth}( \Phi_{\bm{\nu}}^j  )   \leq c_{j,2}  \cdot   \log_2(m(\Lambda))
  \end{equation*} 
  otherwise. Observe that we have found DNNs $\Phi^{j}_{\bm{\nu}}$ of the same depth that approximate each polynomial $\Psi_{\bnu}$ for $\bnu \in \Lambda$. We consider now the DNN formed by vertically stacking these DNNs, i.e., $\Phi_{\Lambda,\delta}^j= (\Phi_{\bm{\nu}}^j  )_{\bnu \in \Lambda}$. It follows immediately that the depth and  width of this DNN satisfy
  \begin{align*}
   \mathrm{width}(\Phi_{\bm{\nu}}^{1}  ) & \leq   c_{ 1,1}   \cdot  |\Lambda|  \cdot m(\Lambda),\\
    \mathrm{depth}( \Phi_{\bm{\nu}}^{1}  )  & \leq c_{ 1,2}    \cdot   \Big(1+ \log(m(\Lambda)) \cdot \Big[ \log(m(\Lambda) \delta^{-1})+ m(\Lambda) +n\Big]\Big),
\end{align*}
in the ReLU case, and
\begin{equation*}
    \mathrm{width}(\Phi_{\bm{\nu}}^j  )   \leq c_{j,1}  \cdot |\Lambda| \cdot m(\Lambda), \qquad
    \mathrm{depth}( \Phi_{\bm{\nu}}^j  )   \leq c_{j,2}  \cdot   \log_2(m(\Lambda)).
  \end{equation*} 
in the RePU and tanh cases. This completes the proof for the Legendre polynomials.

\pbk
\textit{Case 2: Chebyshev polynomials.} The orthonormal Chebyshev polynomials are defined by
\begin{equation}\label{eq_poly_c}
  \Psi_{\bnu}(\y) =  2^{\nm{\bnu}_0/2} \prod_{i \in \supp(\bnu)} \cos(\nu_i \mathrm{arccos}(y_i)),\quad \forall \y \in \cU,\ \bnu \in \cF.
 \end{equation}
We can write each factor as a product over the roots of the polynomials $\cos(\nu_i \mathrm{arccos}(y_i))$, to give 
 \begin{align}\label{eq:poly_Leg_facts_cheb}
     \Psi_{\bnu}(\y)  &=  \prod_{i \in \supp(\bnu)} \prod^{\nu_i}_{j=1} \left ( 2^{1 / 2} d_{\nu_i} \right )^{1/\nu_i}  (y_i - r^{(\nu_i)}_j), \quad \forall {\y} \in  {\cU}, \ \bnu \in \cF.
 \end{align}
Define the affine mapping   $\cA_{\bnu}: \bbR^n  \rightarrow \bbR^{m(\Lambda)}$ with entries
\begin{equation*}
a_{i,j}(\y)=   \left ( 2^{1/ 2} d_{\nu_i} \right )^{1/\nu_i}   (y_i - r^{(\nu_i)}_j) ,\quad i \in \supp(\bnu), j \in [\nu_i]{,}
\end{equation*}
where  $d_{\nu} = 2^{\nu - 1}$, and the remaining $m(\Lambda)- \|\bnu\|_1$ entries being equal to one. As in the previous case, the roots $r^{(\nu_i)}_j \in [-1,1]$. Hence we define $M$ as  
\[
M = \prod_{i=1}^{m(\Lambda)}  M_{i}= \prod_{ i \in \supp(\bnu)} \prod^{\nu_i}_{j=1} \left ( 2^{1/ 2} d_{\nu_i} \right )^{1/\nu_i}2 = 2^{2\nm{\bm{\nu}}_1 - \nm{\bm{\nu}}_0/2}  \leq  2^{2 m(\Lambda)}.
\]
Then,  using same arguments as those for the Legendre case, by Lemma  \ref{l:multi_nRepU} and  Lemma \ref{lemma_dnn} for any $0 < \delta <1$ there exists a ReLU ($j= 1$), a RePU ($j=  {\ell}$) or a tanh ($j=0$) DNN  $\Phi_{\delta,\cM}^{j}$ that approximates  the multiplication of $m(\Lambda)$ factors defined    in \eqref{eq:poly_Leg_facts_cheb} with this specific choice of $M$.   Thus, we define the DNNs $\Phi_{\bm{\nu}}^{j} :  \bbR^n \rightarrow \bbR$  by
\[
\Phi_{\bm{\nu}}^j = \Phi_{\delta,{M}}^j \circ {\cA}_{\bnu}, \quad j \in \lbrace 0, 1, {\ell} \rbrace ,
\]
for which the following bound holds
\[
\nm{\Psi_{\bm{\nu}} - \Phi_{\bm{\nu}}^j \circ  \cT_{\Theta} }_{L^{\infty}_{\varrho}(\cU)} \leq \delta, \quad j \in \lbrace 0, 1, {\ell} \rbrace, \quad \forall \bnu \in \Lambda.
\]
We now obtain the result by using the bound for $M$ and the same arguments as in the Legendre case.
 \end{proof}

\section{Polynomial approximation rates for holomorphic functions}\label{s:inf-dim}

In order to derive the desired algebraic rates of convergence in our main theorems, we first need several results on the approximation of infinite-dimensional, holomorphic, Banach-valued functions via polynomials. See, e.g., \cite{Adcock2022,cohen2015high} for an in-depth discussion on this topic. 

The following theorem can be found in \cite[Thm.~A.3]{adcock2022efficient}, and is an amalgamation of various known results in the literature. For further details, see \cite[Rem.\ A.4]{adcock2022efficient} or \cite{Adcock2022,cohen2015high} and references therein.  Note that this result involves lower and anchored sets, which were introduced in Definition \ref{d:lower-anchored-set}.
\thm{
[Best $s$-term decay rates; infinite-dimensional case]
\label{thm:best_s-term_inf-dim}
Let $\varrho$ be the tensor-product uniform or Chebyshev measure on $\cU = [-1,1]^{\bbN}$ and $\{ \Psi_{\bm{\nu}} \}_{\bm{\nu} \in \cF}$ be the corresponding tensor-product orthonormal Legendre  or {Chebyshev} polynomial basis of $L^2_{\varrho}(\cU)$.
Let $0 < p <1$, $\varepsilon >0$, $\bm{b} \in \ell^p(\bbN)$ with $\bm{b} > \bm{0}$ and $f \in \cH(\bm{b},\varepsilon)$, where $\cH(\bm{b},\varepsilon)$ is as in \R{B-b-eps-def}. Let $\bm{c} = (c_{\bm{\nu}})_{\bm{\nu} \in \cF}$ be the Chebyshev or Legendre coefficients of $f$, as in \R{f-exp}. Then the following best $s$-term decay rates hold:
\begin{itemize}
\item[(i)] For any $p \leq q < \infty$ and $s \in \bbN$, there exists a lower set $S \subset \cF$ of size $|S| \leq s$ such that 
\bes{
\sigma_s(\bm{c})_{q;\mathcal{V}} \leq \nmu{\bm{c} - \bm{c}_S}_{q;\cV} \leq \nmu{\bm{c} - \bm{c}_S}_{q,\bm{u};\cV} \leq   C\cdot s^{1/q-1/p}{,}
}
where $\sigma_{s}(\bm{c})_{q;\cV}$ is as in \R{sigma-equiv} (with $\Lambda = \cF$), $\bm{u}$ is as in \R{weights_def} and $C = C (\bm{b},\varepsilon,p) >0$ depends on $\bm{b}$, $\varepsilon$ and $p$ only.
\item[(ii)] For any $p \leq q \leq 2$ and $k >0$, there exists a set $S \subset \cF$ with $|S|_{\bm{u}} \leq k$ such that
$$
\sigma_k(\bm{c})_{q,\bm{u};\mathcal{V}} \leq \nmu{\bm{c} - \bm{c}_S}_{q,\bm{u};\cV}
\leq C \cdot k^{1/q-1/p}, 
$$
where $\sigma_k(\bm{c})_{q,\bm{u};\mathcal{V}}$ is as in \R{sigma-k-w-equiv} (with $\Lambda = \cF$), $\bm{u}$ is as in \R{weights_def} and $C = C(\bm{b},\varepsilon,p) >0$ depends on $\bm{b}$, $\varepsilon$ and $p$ only.
\item[(iii)] Suppose that $\bm{b}$ is monotonically nonincreasing. Then, for any $p \leq q < \infty$ and $s \in \bbN$, there exists an anchored set $S \subset \cF$ of size $|S| \leq s$ such that 
\bes{
\sigma_s(\bm{c})_{q;\mathcal{V}} \leq \nmu{\bm{c} - \bm{c}_S}_{q;\cV} \leq \nmu{\bm{c} - \bm{c}_S}_{q,\bm{u};\cV} \leq   C\cdot s^{1/q-1/p}{,}
}
where $\sigma_{s}(\bm{c})_{q;\cV}$ is as in \R{sigma-equiv} (with $\Lambda = \cF$), $\bm{u}$ is as in \R{weights_def} and $C = C(\bm{b},\varepsilon,p) >0$ depends on $\bm{b}$, $\varepsilon$ and $p$ only.
\end{itemize}
}

Recall that in our main theorems, we consider the case where $\bm{b}$ belongs to the monotone $\ell^p$ space $\ell^p_{\mathsf{M}}(\bbN)$. For this we require the following corollary of Theorem \ref{thm:best_s-term_inf-dim}:
\cor{
\label{cor:anchored_case_lpM_space}
Let $0 < p <1$, $\varepsilon >0$, $\bm{b} \in \ell^p_{\mathsf{M}}(\bbN)$ and $f \in \cH(\bm{b},\varepsilon)$, where $\cH(\bm{b},\varepsilon)$ is as in \R{B-b-eps-def}. Let $\bm{c} = (c_{\bm{\nu}})_{\bm{\nu} \in \cF}$ be the Chebyshev or Legendre coefficients of $f$, as in \R{f-exp}. Then, for any $p \leq q < \infty$ and $s \in \bbN$, there exists an anchored set $S \subset \cF$ of size $|S| \leq s$ such that 
\bes{
\sigma_s(\bm{c})_{q;\mathcal{V}} \leq \nmu{\bm{c} - \bm{c}_S}_{q;\cV} \leq \nmu{\bm{c} - \bm{c}_S}_{q,\bm{u};\cV} \leq   C\cdot s^{1/q-1/p},
}
where $\sigma_{s}(\bm{c})_{q;\cV}$ is as in \R{sigma-equiv} (with $\Lambda = \cF$), $\bm{u}$ is as in \R{weights_def} and $C = (\bm{b},\varepsilon,p) >0$ depends on $\bm{b}$, $\varepsilon$ and $p$ only. 
}
\prf{Let $\tilde{\bm{b}}$ be the minimal monotone majorant of $\bm{b}$, defined in \R{min-mon-maj}. We first claim that
\be{
\label{set-inclusion}
\cR(\tilde{\bm{b}},\varepsilon) \subseteq \cR(\bm{b},\varepsilon),
}
where $\cR$ is as in \R{def_R_b_e}.
Let $\bm{\rho} \geq \bm{1}$ be such that
\bes{
\sum^{\infty}_{j=1} \left ( \frac{\rho_j + \rho^{-1}_j}{2} -  1 \right ) \tilde{b}_j \leq \varepsilon.
}
Then, since $\tilde{b}_j \geq b_j$ for all $j$, we have 
\bes{
\sum^{\infty}_{j=1} \left ( \frac{\rho_j + \rho^{-1}_j}{2} -  1 \right ) b_j \leq \varepsilon.
}
Thus,
\eas{
\cR(\tilde{\bm{b}},\varepsilon) &= \bigcup \left\lbrace \cE(\brho): \brho \geq \bm{1},\  \sum_{j=1}^{\infty} \left( \dfrac{\rho_j+\rho_j^{-1}}{2} -1 \right) \tilde{b}_j \leq  \varepsilon \right\rbrace
\\
& \subseteq  \bigcup \left\lbrace \cE(\brho): \brho \geq \bm{1},\  \sum_{j=1}^{\infty} \left( \dfrac{\rho_j+\rho_j^{-1}}{2} -1 \right) b_j \leq  \varepsilon \right\rbrace = \cR(\bm{b},\varepsilon),
}
as required.

Now let $f \in \cH(\bm{b},\varepsilon)$, i.e., $f$ is holomorphic in $\cR(\bm{b},\varepsilon)$ and satisfies $\nmu{f}_{L^{\infty}_{\varrho}(\cR(\bm{b},\varepsilon))} \leq 1$. It follows from \R{set-inclusion} that $f \in \cH(\tilde{\bm{b}},\varepsilon)$. Since $\tilde{\bm{b}}$ is monotonically nonincreasing and $\ell^p$-summable, part (iii) of Theorem \ref{thm:best_s-term_inf-dim} now immediately implies the result.
}

\rem{[Dependence of the set $S$ on $\bm{b}$, $\varepsilon$ only]
\label{rem:S-idpt-f}
As stated, the various sets $S$ described in these two results depend on the function $f$ being approximated. In fact, an inspection of the proofs of these results  (see the references mentioned above) reveals that they only depend on the holomorphy parameters  $\bm{b}$ and $\varepsilon$. This holds because the proofs rely on bounds for the polynomial coefficients $c_{\bm{\nu}}$ that depend on $\bm{\nu}$, $\bm{b}$ and $\varepsilon$ only. We will use this observation later in the proofs of Theorems \ref{t:mainthm2} and \ref{t:mainthm2H}. 
}

\section{Proofs of main theorems}\label{S:proofs}

We are now ready to prove Theorems \ref{t:mainthm1}-\ref{t:mainthm2H}. The general idea comes from the proof of the main results in \cite[Sec. B.4]{AdcockEtAl2021MSML}. We first show that the polynomial matrix $\bm{A} \in \bbR^{m \times N}$ has the wrNSP. Then, we show that its approximation via DNNs $\bm{A}' \in \bbR^{m \times N}$ has the wrNSP by using a perturbation result from \cite[Lem.\ 12]{AdcockEtAl2021MSML} (see also \cite[Lem.\ 8.5]{adcock2021compressive}). This implies that the operator $\bm{A}' \in \cB(\cV^N, \cV^m)$ has the wrNSP by Lemma \ref{l:LegMat_RIP_general}.   Next, after splitting the error into various terms, we use Lemma \ref{lemma-musuboptimal} in combination with the results in \S \ref{s:inf-dim} to derive the desired error bounds. Finally, we use the results of \S \ref{sec:DNN-approx} to estimate the depths and widths of the DNN architectures.

Throughout these proofs, we use the notation $a \lesssim b$ to mean that there exists a constant $c>0$ independent of $a$ and $b$ such that $a \leq cb$.

\subsection{Unknown anisotropy case}

We commence with the unknown anisotropy case.
\begin{proof} [Proof of Theorem \ref{t:mainthm1}] 
The proof is divided in several steps. 

\pbk
\textit{Step 1: Problem setup.} Let $\Theta$, $|\Theta|=n$ be as in \eqref{def_n}, $\Lambda =\Lambda^{\mathsf{HCI}}_{n}$ be as in \eqref{HC_index_set_inf}, $N = |\Lambda|$ and $\delta>0$ be a constant whose value will be chosen later in Step 4. Let $\Phi_{\Lambda,\delta}$ be as in  Theorem \ref{Prop_Ex_NN} 
and consider the class of DNNs \eqref{class_DNN}. Then, as shown in \S \ref{S:formulation_recovery}--\ref{S:Unk_recov}, we can reformulate the DNN training problem \eqref{trainingprob1} as the Banach-valued, weighted SR-LASSO problem \eqref{wsr-LASSO_NN}.

\vspace{1pc} \noindent
\textit{Step 2: Establishing the weighted rNSP.} Let $\bm{A}' \in \bbR^{m \times N}$ be given by \eqref{def_A_NN_general}. We now prove that the induced operator $\bm{A}' \in \cB(\cV^N, \cV^m)$ has the rNSP over $\cV$ of order $(k,\bm{u})$ with constants $\gamma' > 0$ and $0<\rho'<1$ to be specified. We do this first by establishing the wrNSP for $\bm{A}$, and then by using a perturbation result \cite[Lem.\ 12]{AdcockEtAl2021MSML} to establish it for $\bm{A}'$.

First, define the weighted sparsity parameter
\begin{equation}\label{def_k}
k:= \sqrt{\dfrac{m}{c_0L}},
\end{equation}
where $L = L(m,\epsilon)$ is as in \eqref{def_L} and $c_0 \geq 1$ is a universal constant. Observe that $m \geq m/L \geq m/(c_0 L) = k^2$, since $m \geq 3$ by assumption and therefore $L(m,\epsilon) \geq 1$ for all $0<\epsilon<1$.
Our aim now is to apply Lemma \ref{l:LegMat_RIP_general} to show that $\bm{A}$ has the weighted RIP over $\bbR$ of order $(2k,\bm{u})$. Let   $\bar{c}_0$   be the constant considered therein (related, in turn, to the constants in \cite[Thm.~2.14]{Brugiapaglia2021}). 
Note that we now use the notation $\bar{c}_0$ to avoid confusion with the constants $c_0$  in Theorem \ref{t:mainthm1}. Set
\bes{
\bar{\delta} = \frac{1}{(4 \sqrt{2} \sqrt{k} + 1) }.
} 
Consider \eqref{m-cond-for-wRIP}. Since $ k \leq \sqrt{m}$ and $m \geq 3$, we have $\log^2(k / \bar{\delta}) \lesssim \log^2(m)$ and since $n = \lceil m / (c_0 L ) \rceil \leq 2 m$ we have $\log(\E n) \lesssim \log(m)$. Hence, using the fact that $\log(m) \gtrsim 1$ once more, we deduce that 
\bes{
 \log^2(k / \bar{\delta})   \cdot  \log^2 ( e n) +\log(4/\epsilon) \lesssim \log^4(m) +\log( \epsilon^{-1})  = L(m,\epsilon).
}
We now assume that
\be{
\label{k-assumption}
k \geq 1.
}
In particular, this implies that $m/(c_0 L) \geq 1$. We discuss the case $k <1$ at the end of the proof. Using this, we get
\eas{
\bar{c}_0  \cdot \bar{\delta}^{-2} \cdot 2 k \cdot  \left( \log^2( k / \bar{\delta})   \cdot  \log^2( e n)+\log(4/\epsilon) \right)  & \leq c_0 \cdot k^2 \cdot L(m,\epsilon) = m,
}
for a suitably-large choice of the universal constant $c_0$. It follows that condition \eqref{m-cond-for-wRIP}, with $k$ and $\epsilon$ replaced by $2k$ and $\epsilon/2$, respectively, holds. Therefore, with probability at least $1-\epsilon/2$, the matrix $\bm{A}$ has the weighted RIP over $\bbR$ of order $(2k,\bm{u})$ with constant 
\be{
\label{delta_choose}
\delta_{2k,\bm{u}} =   \bar{\delta} =  \frac{1}{4 \sqrt{2} \sqrt{k} + 1}.
}
We now seek to apply Lemma \ref{l:implies-RIP}. Notice that, with this value of $\delta_{2k,\bm{u}}$, we have
\bes{
2 \sqrt{2} \frac{\delta_{2k,\bm{u}}}{1-\delta_{2k,\bm{u}}} = \frac{1}{2 \sqrt{k}},\qquad \frac{\sqrt{1+\delta_{2k,\bm{u}}}}{1-\delta_{2k,\bm{u}}} \leq \frac{3}{2}.
}
Here, in the second step, we used the fact that $k \geq 1$, by assumption. Thus, with probability at least $1-\epsilon/2$, $\bm{A} \in \bbR^{m \times N}$ has the weighted rNSP over $\bbR$ of order $(k,\bm{u})$ with constants 
 \begin{equation}
\rho= \frac{1}{2\sqrt{k}},\quad \gamma = \frac{3}{2}.
 \end{equation}  
 Next, we turn our attention to the matrix $\bm{A}'$. It is a short argument (see Step 3 of the proof of  \cite[Thm.\ 5]{AdcockEtAl2021MSML}) based on the definition of $\Phi_{\bnu,\delta,\Theta}$ to show that
\begin{equation}\label{eq_A_AtUB}
\nmu{\bm{A}- \bm{A}'}_{2} \leq \sqrt{N} \delta.
\end{equation}
Now suppose that $\delta$ satisfies  
\begin{equation}\label{delta_cond_1}
\sqrt{N} \delta \leq  \tilde{\delta} : = \dfrac{2}{3(3+4k)}.
\end{equation}
Later in Step 4 we will ensure that this condition is fulfilled. Then, using a straightforward extension of \cite[Lem. 8.5]{adcock2021compressive} from the unweighted to the weighted case  we deduce that, with probability at least $1-\epsilon/2$, $\bm{A}'$ has the weighted rNSP over $\bbR$ of order $(k,\bm{u})$ with constants
\bes{
\frac{\rho+\gamma \tilde{\delta} \sqrt{k}}{1-\gamma \tilde{\delta}} = \frac{3}{4 \sqrt{k}} : = \tilde{\rho},\qquad \frac{\gamma}{1-\gamma \tilde{\delta}} \leq \frac{7}{4} : = \widetilde{\gamma}.
}
Here, in the second step we used the fact that $k \geq 1$ once more. 
Finally, we now apply Lemma \ref{l:from_R_to_V_rNSP}. First note that $s^*(k) \leq k$ since $\bm{u}\geq 1$. 
Hence, with probability at least $1-\epsilon/2$, the corresponding operator $\bm{A}' \in \cB (\cV^N, \cV^m)$ has the weighted rNSP over $\cV$ of order $(k,\bm{u})$ with constants 
\be{
\label{rho_tau_prime}
\sqrt{k} \widetilde{\rho}= \frac34 : = \rho',\qquad  \sqrt{k} \widetilde{\gamma} \leq 2\sqrt{k} : = \gamma'.
}

\vspace{1pc} \noindent
\textit{Step 3: Estimating the error.} First, we recall that $\cP_{K} : \cV \rightarrow \cV_{K}$ is a bounded linear operator and $\pi_{K} = \max \{ \nm{\cP_{K}}_{\cV \rightarrow \cV_{K}},1\}$. Let $f \in \cH(\bm{b},\varepsilon)$ and consider its expansion \R{f-exp}. As in \eqref{f_exp_trunc}, let $f_{\Lambda} = \sum_{\bm{\nu} \in \Lambda} c_{\bm{\nu}} \Psi_{\bm{\nu}}$ be the truncated expansion of $f$. For convenience, we now define
\be{
\label{def-ELambdas}
E_{\Lambda,2}(f) = \nm{f -f_{\Lambda}  }_{L^2_{\varrho}(\cU ; \cV)} ,\quad E_{\Lambda,\infty}(f) = \nm{f -f_{\Lambda}  }_{L^{\infty}_{\varrho}(\cU ; \cV)}.
}
We now derive an error bound for $f - f_{\hat{\Phi},\Theta}$, where $\hat{\Phi}$ is an approximate minimizer of \eqref{trainingprob1} and $f_{\hat{\Phi},\Theta}$ is as in \eqref{fPhi_DNN}. Write  $\hat{\Phi} = \widehat{\bm{C}}^{\top} \Phi_{\Lambda,\delta}$ for $\widehat{\bm{C}} \in \bbR^{N \times K} $ and let $\hat{\bm{c}} = (\hat{c}_{\bm{\nu}})_{\bm{\nu} \in \Lambda}$ be the corresponding approximate minimizer of \eqref{wsr-LASSO_NN} defined via the relation \eqref{zZrelation}. Set
\bes{
f_{\hat{\Psi}} = \sum_{\bm{\nu} \in \Lambda} \hat{c}_{\bm{\nu}} \Psi_{\bm{\nu}}.
}
{Then
\begin{align*}
&\nmu{f -f_{\hat{\Phi},\Theta}}_{L^2_{\varrho}(\cU ; \cV)}  
\\
&~~~ \leq   \nmu{f - \cP_{K}(f) }_{L^2_{\varrho}(\cU; \cV)} 
          + \nmu{ \cP_{K}(f)-\cP_{K}(f_{\Lambda})  }_{L^2_{\varrho}(\cU ; \cV)}
          + \nmu{\cP_{K}(f_{\Lambda}) - f_{\hat{\Psi}}}_{L^2_{\varrho}(\cU ; \cV)} 
          + \nmu{f_{\hat{\Psi}} - f_{\hat{\Phi},\Theta} }_{L^2_{\varrho}(\cU ; \cV)}
\\
 & ~~~ =:   A_1 + A_2 + A_3+A_4.
\end{align*}
We now bound the terms $A_1$, $A_2$, $A_3$ and $A_4$ in several substeps.}

\vspace{1pc} \noindent \textit{Step 3(i):  Bounding $A_1$.} {First, we have that $A_1 \leq E_{\mathsf{disc}}$, where $E_{\mathsf{disc}}$ is as in \R{E123_def_1}.}

\vspace{1pc} \noindent \textit{Step 3(ii):  Bounding $A_2$.}  {Using the linearity of $\cP_K$ and the fact that it is a bounded  operator, we have
\begin{equation}\label{A1bound}
A_2 \leq \pi_K  \nmu{ f-f_{\Lambda}  }_{L^2_{\varrho}(\cU ; \cV)} = \pi_K E_{\Lambda,2}(f).
\end{equation}}

\vspace{1pc} \noindent {\textit{Step 3(iii):  Bounding $A_3$.}  Let $\bm{u}$  be the intrinsic weights in \eqref{def:int_weights}. Then we have} 
\begin{equation*}
{A_3 = \nm{\cP_{K}(f_{\Lambda}) - f_{\hat{\Psi}} }_{L^2_{\varrho}(\cU ; \cV)} 
\leq \nm{\cP_{K}(f_{\Lambda}) - f_{\hat{\Psi}} }_{L^{\infty}(\cU ; \cV)}  \leq  \nm{ \bm{c}_{\Lambda,{K}} - \hat{\bm{c}} }_{1,\bm{u};\cV},}
\end{equation*}
where $\bm{c}_{\Lambda,{K}} = (\cP_K(c_{\bnu}))_{\bnu \in \Lambda} $.  We now apply Lemma \ref{lemma-musuboptimal} to the problem \eqref{wsr-LASSO_NN}. In Step 2 we showed that, with probability at least $1-\epsilon/2$, $\bm{A}' \in \cB(\cV_K^N,\cV_K^m)$ has the weighted rNSP of order $(k, \bm{u})$ with constants $\rho'$ and $\gamma'$ given by \eqref{rho_tau_prime} and $k$ given by \eqref{def_k}. Hence, this lemma gives
\begin{equation*}
{\nm{ \bm{c}_{\Lambda,K}  - \hat{\bm{c}}}_{1,\bm{u};\cV}   
\leq  C_1 \left( 2 {\sigma_{k}(\bm{c}_{\Lambda,K} )_{1,\bm{u};\cV}} 
+\frac{\cG'(\hat{\bm{c}}) - \cG'(\bm{c}_{\Lambda,K} ) }{ \lambda} \right) + \left( \dfrac{C_1}{\lambda} + C_2 \sqrt{k} \right)\nm{\bm{A}' \bm{c}_{\Lambda,K}  - \bm{f}}_{2;\cV} ,}
\end{equation*}
with probability at least $1-\epsilon/2$, where  {$C_1 =   {(1+\rho')}/{(1-\rho')}   $, $C_2 = {2 \gamma'}/{(1-\rho')}  $} and $\cG'$ is as in \eqref{wsr-LASSO_NN}.  Notice that this holds provided $\lambda \leq C'_1 / (C'_2 \sqrt{k})${,  where  $C'_1 =   {(1+\rho')^2}/{(1-\rho')}   $, $C'_2 = {(3+\rho') \gamma'}/{(1-\rho')}  $}. Using the values for $\gamma'$ and $\rho'$ we get
\begin{equation*}
\dfrac{1}{6\sqrt{k}} =  \left(\dfrac{1}{\gamma'} \right) \frac13 <  \dfrac{1}{\gamma'} \left[ 1 - \dfrac{2}{(3+\rho')}\right] = \dfrac{1}{\gamma'}\left[  \dfrac{(1+\rho')}{(3+\rho')}\right] \leq \dfrac{1}{\gamma'}\left[  \dfrac{(1+\rho')^2}{(3+\rho')}\right],
\end{equation*}
which implies that
\begin{equation}\label{lambda_def}
\lambda : = \dfrac{1}{6 \sqrt{m/L}} = \frac{1}{6 \sqrt{c_0} k} \leq \dfrac{1}{\sqrt{k}}\cdot\dfrac{1}{6 \sqrt{k}} \leq \dfrac{1}{\sqrt{k}}\cdot\dfrac{1}{ \gamma'}\left[  \dfrac{(1+\rho')^2}{(3+\rho')} \right ] =\dfrac{C_1'}{C_2'\sqrt{k} },
\end{equation}
as required. Here we used the fact that $c_0 \geq 1$.
Now,  since $\bm{\hat{c}}$ is an approximate minimizer  and $\bm{c}_{\Lambda,K}  \in \cV_K^N$ is feasible for \eqref{wsr-LASSO_NN}, we have $\cG'(\hat{\bm{c}}) - \cG'(\bm{c}_{\Lambda,K}  ) \leq E_{\mathsf{opt}}$, where $E_{\mathsf{opt}}$ is as in \eqref{def:Eopt}. 
Also, due to the values of $\rho'$ and $\gamma'$ given by \eqref{rho_tau_prime}, we notice that $C_1,C'_1 \lesssim 1$ and $C_2,C'_2 \lesssim \sqrt{k}$. Substituting this into the previous bound and noticing that $1/\lambda \lesssim k$, we obtain
\begin{equation*}
{\nm{ \bm{c}_{\Lambda,K}  - \hat{\bm{c}}}_{1,\bm{u};\cV}     \lesssim     {\sigma_{k}(\bm{c}_{\Lambda,K})_{1,\bm{u};\cV}} 
 + {k} E_{\mathsf{opt}}+{k}\nm{\bm{A}' \bm{c}_{\Lambda,K} - \bm{f}}_{2;\cV} ,}
\end{equation*}
with probability at least $1-\epsilon/2$.
Consider the first term. Since $\cP_K$ satisfies \R{ch-def}, \R{sigma-k-w-equiv} implies that 
\be{
\label{sigma-k-Ph}
\sigma_{k}( \bm{c}_{\Lambda,K})_{1,\bm{u};\cV} = \inf \left \{ \sum_{\bm{\nu} \in \Lambda \backslash S} u_{\bm{\nu}}\| \cP_K({c}_{\bnu})\|_{\cV} : S \subseteq \Lambda,\ |S|_{\bm{u}} \leq k \right \} \leq   \pi_K \sigma_{k}(\bm{c}_{\Lambda})_{1,\bm{u};\cV}.
}
Hence
\begin{equation}
\label{SRLASSO_gives_this}
{\nm{ \bm{c}_{\Lambda,K}  - \hat{\bm{c}}}_{1,\bm{u};\cV} \lesssim \pi_K  {\sigma_{k}(\bm{c}_{\Lambda})_{1,\bm{u};\cV}}  + {k}\nm{\bm{A}' \bm{c}_{\Lambda,K}- \bm{f} }_{2;\cV} +{k} E_{\mathsf{opt}},}
\end{equation}
with probability at least $1-\epsilon/2$.
We now estimate the second term. Let $i = 1,\ldots,m$ and write  
\eas{
\sqrt{m} \left ( \bm{A}' \bm{c}_{\Lambda,K} - \bm{f} \right )_i &= \sum_{\bm{\nu} \in \Lambda}  \cP_K({c}_{\bnu})\Phi_{\bm{\nu} , \delta, n}(\bm{y}_i) - f(\bm{y}_i) - n_i
\\
& = \sum_{\bm{\nu} \in \Lambda} \cP_K({c}_{\bnu})\left ( \Phi_{\bm{\nu} , \delta,n}(\bm{y}_i) - \Psi_{\bm{\nu}}(\bm{y}_i) \right ) 
+ \sum_{\bm{\nu} \in \Lambda} \cP_K( c_{\bnu}) \Psi_{\bm{\nu}}(\bm{y}_i) - f(\bm{y}_i) - n_i.
}
Then, using Theorem \ref{Prop_Ex_NN}, the triangle inequality  and the fact that $\cP_K$ is a bounded linear operator, we get
\eas{
\|  \sqrt{m}  ( \bm{A}' \bm{c}_{\Lambda,K}  - \bm{f}  )_i  \|_{\cV} 
& \leq \sum_{\bm{\nu} \in \Lambda} \nm{  \cP_K({c}_{\bnu}) }_{\cV} \delta 
 + \nm{ \sum_{\bm{\nu} \in \Lambda}   \cP_K({c}_{\bnu}) \Psi_{\bm{\nu}}(\bm{y}_i) -  f(\bm{y}_i) }_{\cV} 
 + \nm{n_i}_{\cV}
\\
& \leq \delta \sum_{\bm{\nu} \in \Lambda} \nm{ \cP_K({c}_{\bnu}) }_{\cV} 
+ \nm{ \sum_{\bm{\nu} \not\in \Lambda} \cP_K(c_{\bnu})\Psi_{\bm{\nu}}(\bm{y}_i)  }_{\cV}  \hspace{-0.3cm}
+ \nm{f(\y_i) - \cP_K(f)(\y_i)}_{\cV}  
+ \nm{n_i}_{\cV}
\\
& \leq \pi_K \sqrt{N} \delta \nm{\bm{c}_{\Lambda}}_{2;\cV }
+\pi_K  \nm{ \sum_{\bm{\nu} \not\in \Lambda}  c_{\bnu} \Psi_{\bm{\nu}}(\bm{y}_i) }_{\cV} 
+ \nm{f- \cP_K(f)}_{L^{\infty}_{\varrho}(\cU;\cV)} 
+ \nm{n_i}_{\cV}
\\ 
& =  \pi_K \left( \sqrt{N} \delta \nm{\bm{c}_{\Lambda}}_{2;\cV}
+ \nmu{f(\y_i) - f_{\Lambda}(\y_i) }_{\cV} \right )
+ E_{\mathsf{disc}}
+ \nm{n_i}_{\cV}  ,
}
where $E_{\mathsf{disc}}$ is as in \R{E123_def_1}. {Notice that, by \eqref{f-exp}, the Cauchy-Schwarz inequality and the orthonormality of $\{\Psi_{\bnu}\}_{\bnu \in \cF}$, we have
\begin{equation}
\nm{\bm{c}_{\bnu}}_{\cV } =   \nm{ \int_{\cU} f(\bm{y}) \Psi_{\bnu} (\bm{y}) \D \varrho (\bm{y}) }_{\cV } \leq \int_{\cU} \|f(\bm{y})\|_{\cV}|\Psi_{\bnu}(\bm{y})| \D \varrho (\bm{y})
 \leq \|f\|_{L^{2}_{\varrho}(\cU; \cV)} \cdot 1 \leq 1,\quad \forall \bnu \in \Lambda.
\end{equation} }
{In the last inequality we used the fact that $ \|f\|_{L^{2}_{\varrho}(\cU; \cV)} \leq \|f\|_{L^{\infty}(\cU; \cV)} \leq 1$.} Hence
\bes{
{\nmu{\bm{A}' \bm{c}_{\Lambda,K} - \bm{f} }_{2;\cV } \leq \pi_K {N} \delta + \pi_K \sqrt{\frac1m \sum^{m}_{i=1}  \nmu{f(\y_i) - f_{\Lambda}(\y_i) }^2_{\cV} } + E_{\mathsf{disc}} + E_{\mathsf{samp}},}
}
where $E_{\mathsf{samp}}$ is as in \R{E123_def_1}.
Now, since $m  = c_0 \cdot L \cdot k^2 \geq 2 \cdot k^2 \cdot \log(4/\epsilon)$ for a sufficiently large choice of the universal constant $c_0$, the arguments in \cite[Lem.\ 7.11]{Adcock2022}  imply that
\begin{equation}
\label{eq:disc_1}
 \sqrt{\frac1m \sum^m_{i=1} \nmu{f(\y_i) - f_{\Lambda}(\y_i)}^2_{\cV}} \leq \sqrt{2} \left ( \frac{E_{\Lambda,\infty}(f) }{{k}} + E_{\Lambda,2}(f)\right ),
\end{equation}
with probability at least $1-\epsilon/2$, where $E_{\Lambda,2}(f)$ and  $E_{\Lambda,\infty}(f)$ are as in \R{def-ELambdas}.  We deduce that
\begin{equation*}
{\nm{\bm{A}' \bm{c}_{\Lambda,K}- \bm{f} }_{2;\cV }
 \lesssim \pi_K \left ( {N}\delta 
 +  \frac{E_{\Lambda,\infty}(f) }{{k}} + E_{\Lambda,2}(f) \right )
 +E_{\mathsf{disc}} +E_{\mathsf{samp}},}
\end{equation*}
with probability at least $1-\epsilon/2$.
Substituting this into \eqref{SRLASSO_gives_this} and applying the union bound now yields
\begin{equation}
\label{A2bound}
{A_3   \lesssim \pi_K \sqrt{k}\left ( \frac{\sigma_{k}(\bm{c}_{\Lambda})_{1,\bm{u};\cV}}{\sqrt{k}} + \sqrt{k} {N} \delta + \frac{E_{\Lambda,\infty}(f)}{\sqrt{k}} + \sqrt{k} E_{\Lambda,2}(f) \right ) + {k} \left ( E_{\mathsf{opt}}+E_{\mathsf{samp}} +  E_{\mathsf{disc}} \right),}
\end{equation}
with probability at least $1-\epsilon$.

\vspace{1pc} \noindent \textit{Step 3(iv):  {Bounding $A_4$.}} Recalling that $f_{\hat{\Phi},\Theta} = \sum_{\bm{\nu} \in \Lambda} \hat{c}_{\bm{\nu}} \Phi_{\bm{\nu}, \delta, \Theta}$ and that $\bm{u} \geq \bm{1}$, we first write
\bes{
A_4 = \nmu{f_{\hat{\Psi}} - f_{\hat{\Phi},\Theta}}_{L^2_{\varrho}(\cU ; \cV)} \leq \sum_{\bm{\nu} \in \Lambda} \nm{\Psi_{\bm{\nu}} - \Phi_{\bm{\nu},\delta,\Theta} }_{L^2_{\varrho}(\cU)}   {u}_{\bnu}\nm{\hat{c}_{\bm{\nu}}}_{\cV} \leq \delta \nm{\hat{\bm{c}}}_{1,\bm{u};\cV}.
}
Recall that $\hat{\bm{c}}$ is an approximate minimizer of \eqref{wsr-LASSO_NN}. Hence
\bes{
\lambda \nm{\hat{\bm{c}}}_{1,\bm{u};\cV} \leq \lambda \nm{\bm{0}}_{1,\bm{u};\cV} + \nmu{\bm{A}' \bm{0} - \bm{f} }_{2;\cV} + E_{\mathsf{opt}}= \nm{\bm{f}}_{2;\cV}+ E_{\mathsf{opt}},
}
where $\bm{0} \in \cV^N_K$ is the zero vector. Using the definitions of $\bm{f}$ and $\lambda$ in \R{def-measMatrix} and \R{lambda_def}, respectively, we see that
\bes{
\nm{\hat{\bm{c}}}_{1,\bm{u};\cV} \lesssim k \left ( \nm{\bm{e}}_{2;\cV } + \nm{f}_{L^{\infty}_{\varrho}(\cU ; \cV)} + E_{\mathsf{opt}} \right ) \leq k \left ( E_{\mathsf{samp}}+ 1 + E_{\mathsf{opt}} \right ).
}
Note that $\delta k \lesssim 1$ due to  \eqref{delta_cond_1}. Since $k \geq 1$, we get
\be{
\label{A3bound}
A_4 \lesssim k \delta + {k} \left ( E_{\mathsf{samp}}+ E_{\mathsf{opt}} \right ).
}

\pbk
\textit{Step 3(v): Final bound.}
Combining the bound for $A_1$ with the estimates  \eqref{A1bound}, \eqref{A2bound} and \eqref{A3bound} and using the facts that $\pi_K \geq 1$ and $k \geq 1$ we deduce that
\begin{equation} \label{everything_but_the_poly_bounds}
\begin{split}
\nmu{f -f_{\hat{\Phi},\Theta}}_{L^2_{\varrho}(\cU ; \cV)} 
\lesssim & ~\pi_K  {\sqrt{k}}\left ( \frac{\sigma_{k}(\bm{c}_{\Lambda})_{1,\bm{u};\cV}}{\sqrt{k}}  +\sqrt{k}\delta {N} + \frac{E_{\Lambda,\infty}(f)}{\sqrt{k}} + \sqrt{k} E_{\Lambda,2}(f) \right )
\\
&+{k}\left(E_{\mathsf{samp}}+ E_{\mathsf{opt}}+  E_{\mathsf{disc}} \right).
\end{split}
\end{equation}
This concludes Step 3.

\vspace{1pc}\noindent
\textit{Step 4: Establishing the algebraic rates.} Here, we bound the first four terms in  \eqref{everything_but_the_poly_bounds}. Recall the definition of $k$  in \eqref{def_k}. Then part (ii) of Theorem \ref{thm:best_s-term_inf-dim} with $q = 1 > p$ gives
\bes{
\frac{\sigma_{k}(\bm{c}_{\Lambda})_{1,\bm{u};\cV}}{\sqrt{k}} \leq C (\bm{b},\varepsilon,p)  \cdot k^{1/2-1/p} = C (\bm{b},\varepsilon,p)  \cdot  \left ( \frac{m}{c_0 L} \right )^{\frac12( \frac12-\frac{1}{p})},
}
where $C(\bm{b},\varepsilon,p)>0$ depends on $\bm{b}$, $\epsilon$ and p only.
Next, define the following term:
\bes{
E_{\Lambda}(f) = \frac{ E_{\Lambda,\infty}(f) }{\sqrt{k}} +  \sqrt{k}E_{\Lambda,2}(f).
}
As noted, the set $\Lambda = \Lambda_{n}^{\mathsf{HCI}}$ contains the union of all anchored sets of size at most $n$ (see \cite[Prop.\ 2.18]{Adcock2022}). We now use Corollary  \ref{cor:anchored_case_lpM_space}  with $s=n$ and $q=1$. This implies that there exists an anchored set $S \subset \cF$ of size $|S| \leq n$ such that
\begin{equation*}
E_{\Lambda,\infty}(f) = \nm{f - f_{\Lambda}}_{L^{\infty}_{\varrho}(\cU ; \cV)}  \leq   {\nmu{\bm{c}-\bm{c}_{\Lambda}}_{1,\bm{u};\cV}} \leq \nmu{\bm{c}-\bm{c}_{S}}_{1,\bm{u};\cV} \leq C (\bm{b},\varepsilon,p)    \cdot n^{1-1/p},
\end{equation*}
{where in the first inequality we used \eqref{eq:truncweight}}. Similarly, we also have
\begin{equation}\label{eq:E_lambda_C}
 E_{\Lambda,2}(f)={\nmu{f - f_{\Lambda}}_{L^{2}_\varrho(\cU;\cV)} \leq   {\nmu{\bm{c}-\bm{c}_{\Lambda}}_{1,\bm{u};\cV}} \leq \nmu{\bm{c}-\bm{c}_{S}}_{1,\bm{u};\cV} \leq C (\bm{b},\varepsilon,p)    \cdot n^{1-1/p} }.
\end{equation}
Therefore 
\begin{equation}\label{eq:def_p}
E_{\Lambda}(f) \leq C(\bm{b},\varepsilon,p) \cdot \left ( k^{-1/2} \cdot n^{1-1/p} + k^{1/2} \cdot {n^{1-1/p}} \right )
{\leq C(\bm{b},\varepsilon,p) \cdot k^{1/2} \cdot n^{1-1/p}.}
\end{equation}
Since {$p\leq 1/2$, the exponent $1-1/p$ is negative}. Using the definitions of $n$ and $k$ in \eqref{def_n} and \R{def_k}, respectively, we see that $n \geq k^2$. Hence
\begin{equation*}
E_{\Lambda}(f) \lesssim C(\bm{b},\varepsilon,p) \cdot {k^{5/2-2/p}} \leq C(\bm{b},\varepsilon,p) \cdot k^{1/2-1/p} = C(\bm{b},\varepsilon,p) \cdot \left ( \frac{m}{c_0 L} \right )^{\frac12(\frac12-\frac1p)}.
\end{equation*}
Here, in the penultimate step we use the fact that $k \geq 1$ by assumption {and $p \leq 1/2$}.
Returning to \eqref{everything_but_the_poly_bounds}, we deduce that
\begin{equation}\label{error_proof_1}
\nmu{f - f_{\hat{\Phi},\Theta}}_{L^2_{\varrho}(\cU ; \cV)} \leq \pi_K \cdot C(\bm{b},\varepsilon,p) \cdot \left ( \frac{m}{c_0 L} \right )^{{\frac12 (1- \frac{1}{p})}} + {k} \left(E_{\mathsf{disc}}+E_{\mathsf{samp}}+ E_{\mathsf{opt}}\right),
\end{equation}
provided $\delta$ satisfies {$k \delta {N} \leq k^{1-\frac1p}$}. Hence it suffices to to choose
\bes{
\delta \leq {N^{-1}    k  ^{- \frac{1}{p}} }.
} 
Therefore, recalling \eqref{delta_cond_1}, we now set 
\be{
\label{delta_def}
\delta =  \min \left \{ \frac{2}{3 (3+4k)\sqrt{N}} , {\dfrac{k^{- \frac{1}{p}}}{N}} \right \}.
}
In this way, using the definition of $E_{\mathsf{app}, \mathsf{UB}}$  in \eqref{E123_def_1}, \eqref{error_proof_1} and the bound $k \lesssim \sqrt{m}$   we get
\bes{
\nmu{f -f_{\hat{\Phi},\Theta}}_{L^2_{\varrho}(\cU ; \cV)} \lesssim {E_{\mathsf{app},\mathsf{UB}}}+{m^{1/2}} \cdot (E_{\mathsf{disc}}+E_{\mathsf{samp}}+ E_{\mathsf{opt}}),
}
as required.

\vspace{1pc} \noindent \textit{Step 5: Bounding the {width and depth} of the DNN architecture.} 
 We have now established the main error bound \R{main_err_bd}. In this penultimate step, we derive the bounds for the {width and depth} of the class of DNNs $\cN$. To do this,   we follow similar arguments to those in Step 6 of the proof of \cite[Thm.\ 5]{AdcockEtAl2021MSML}. Using \eqref{delta_def} and the facts that $k \geq 1$ and $p < 1$, we first see that
\be{
\label{delta-LB}
\delta \gtrsim   {N^{-1}} k^{-\frac1p} \quad \Rightarrow \quad \log(\delta^{-1}) \lesssim  \log(N)+ \dfrac{1}{p}\log(k).
}
From the definition of  $\Lambda^{\mathsf{HCI}}_{n}$  in  \eqref{HC_index_set_inf}  notice that $m(\Lambda)= \max_{\bm{\nu}\in\Lambda}\|\bm{\nu}\|_1 \leq n$. We now apply Theorem~\ref{Prop_Ex_NN} with the ReLU activation function and the choice $\Theta =[n]$ as in \R{def_n}. Notice that this choice is valid, since every $\bm{\nu} \in \Lambda = \Lambda^{\mathsf{HCI}}_n$ satisfies $\supp(\bm{\nu}) \subseteq [n]$. We deduce that the width and depth of the network $\cN = \cN^{ 1}$ satisfies  
  \begin{align*}
   \mathrm{width}(\cN^{ 1}  ) & \lesssim   N  n, \\
    \mathrm{depth}( \cN^{ 1}  )  & \lesssim      \Big(1+ \log(n)\Big[  \log(n )+ \log(N)+p^{-1}\log(k)
   + n \Big]\Big),
\end{align*} 
Noticing that, $3 \leq m$, $k^2 \leq n \leq m $ and $N \lesssim n^{2 + \log_2(n)}$, $\log(N) \lesssim \log^2(n)$ (these follow from \R{N_bound}) now gives the result in the ReLU case. On the other hand, for the RePU or hyperbolic tangent activation function, the width and depth of this network satisfy
  \begin{align*}
  \mathrm{width}(\cN^j  )  \leq c_{j,1}  \cdot n^{ 3+ \log_{2}(n)},  \quad 
  \mathrm{depth}(\cN^j  ) \leq c_{j,2}  \cdot  \log_2(n ),
\end{align*}
where   $c_{j,1}$, $c_{j,2}$ are universal constants for the  hyperbolic tangent  activation function  ($j=0$) and $c_{j,1},c_{j,2}$ depend on $ {\ell}$ for the RePU activation function ($j= {\ell}$).  The bounds in these cases now follow from the fact that $n \leq m$.

\vspace{1pc} \noindent
\textit{Step 6: The case $k<1$.} So far, we  have assumed that  $k \geq 1$. We now address the case $k<1$. In this case, since $p<1$ we have
\begin{equation}\label{eq:kless_1implies}
k=\sqrt{\dfrac{m}{c_0L}}<1 \Rightarrow  1 < \left(\dfrac{m}{c_0L}\right)^{\frac12(1-\frac1p)}.
\end{equation}
Next, we skip Step 2 of the above argument, and go directly to Step 3. Once more, we observe that
\begin{align*}
&\nmu{f -f_{\hat{\Phi},\Theta}}_{L^2_{\varrho}(\cU ; \cV)}  
 \leq  {  A_1 + A_2 + A_3+A_4.}
\end{align*}
We now bound the various terms.

\vspace{1pc} \noindent
\textit{Step 6(i):  Bounding $A_1$.} Once more we have $A_1 \leq E_{\mathsf{disc}}$.

\vspace{1pc} \noindent
\textit{Step 6(ii):  Bounding $A_2$.} Using {\R{A1bound} and \eqref{eq:E_lambda_C} with $n=1$} we get
\begin{equation*}
A_2 \leq \pi_K {E_{\Lambda,2}(f)} \leq \pi_K { C(\bm{b},\varepsilon,p).}
\end{equation*}

\vspace{1pc} \noindent
\textit{Step 6(iii):  Bounding $A_3$.} Once more, using   \eqref{eq:E_lambda_C} with $n=1$ and triangle inequality we obtain
\begin{align*}
\nmu{\cP_K( f_{\Lambda}) - f_{\hat{\Psi}}}_{L^2_{\varrho}(\cU ; \cV)} 
&\leq \nmu{\cP_K( f_{\Lambda}) -\cP_K( f) +\cP_K( f) - f_{\hat{\Psi}}}_{L^2_{\varrho}(\cU ; \cV)}\\
& \leq \pi_K \cdot C(\bm{b},\varepsilon,p) + \pi_K \|f\|_{L^{\infty}(\cU;\cV)}  { +\|\hat{\bm{c}}\|_{1,\bm{u};\cV}.}
\end{align*}
Since $\hat{\bm{c}}$ is an approximate minimizer, following the same analysis as  Step 3(iv)  gives
\bes{
\nm{\hat{\bm{c}}}_{1,\bm{u};\cV} \lesssim 
k (E_{\mathsf{samp}}+1+E_{\mathsf{opt}}).
}
Therefore $A_2 \lesssim k (E_{\mathsf{samp}}+1+E_{\mathsf{opt}}) +\pi_K$.

\vspace{1pc} \noindent
\textit{Step 6(iv):  Bounding $A_4$.} This step is almost identical and gives $A_4 \lesssim k \delta (E_{\mathsf{samp}}+1+E_{\mathsf{opt}})$.

\vspace{1pc} \noindent
\textit{Step 6(iv):  Final bound.} Combining the previous estimates and using the fact that $\pi_K \geq 1$, $\delta k<k<1$ (the first inequality follows from \R{delta_def}), we deduce that
\begin{equation*}
\nmu{f -f_{\hat{\Phi},\Theta}}_{L^2_{\varrho}(\cU ; \cV)} 
\lesssim
\pi_K(C(\bm{b},\varepsilon,p)+1)+ k(E_{\mathsf{samp}}+ E_{\mathsf{opt}}) +E_{\mathsf{disc}}.
\end{equation*}
The condition \R{eq:kless_1implies} and the fact that $m \geq 3$ give that ${m^{1/2} }> 1 > k$. Using \R{eq:kless_1implies} once more, we deduce that
\begin{equation*}
\nmu{f -f_{\hat{\Phi},\Theta}}_{L^2_{\varrho}(\cU ; \cV)} 
\lesssim \pi_K \cdot \left(\dfrac{m}{c_0L}\right)^{{\frac12(1-\frac1p)}} + {m^{1/2}}(E_{\mathsf{samp}}+ E_{\mathsf{opt}} +E_{\mathsf{disc}}).
\end{equation*}
The error bound \R{main_err_bd} now follows in this case with $C = C(\bm{b},\varepsilon,p)+1$, depending on $\bm{b}$, $\varepsilon$ and $p$ only.

\vspace{1pc} \noindent \textit{Step 6(v): Bounding the {width and depth} of the DNN architecture.} It remains to bound the {width and depth} of the DNN architecture in the case $k < 1$. Since $m/(c_0L)<1$, by \eqref{def_n} we see that $n=1$ in this case.
Recall also that $\delta$ is defined by \R{delta_def}. Hence {$\delta \gtrsim 1/N$} in this case, since $k < 1$  and $N \geq 1$. We deduce that $\log(\delta^{-1}) \lesssim \log(N)$. We now argue as in Step 5, using this bound and the fact that $n = 1 < m$. Thus, the bounds still hold in this case.
\end{proof}

\begin{proof}[Proof of Theoorem \ref{t:mainthm1H}]
The proof is similar  to that of Theorem \ref{t:mainthm1}, except that {it uses Parseval's identity} and \cite[Lem. 7.5]{adcock2022efficient} instead of Lemma \ref{l:from_R_to_V_rNSP}. 

\pbk
\textit{Step 1: Problem setup.} This step is identical.

\vspace{1pc} \noindent
\textit{Step 2: Establishing the weighted rNSP.}  In this case, we define
\begin{equation}\label{def_kUH}
k:= \dfrac{m}{c_0L},
\end{equation}
where $L = L(m,\epsilon)$ is as in \eqref{def_L} and $c_0 \geq 1$ is a universal constant. Observe that $m \geq m/L \geq m/(c_0 L) = k$. Let  $\bar{c}_0$  be the constant considered in Lemma~\ref{l:LegMat_RIP_general}. Set $\bar{\delta} = \left({4 \sqrt{2}+1}\right)^{-1}$. Observe that in this case we do not need the assumption $k>1$ as in the previous case. Indeed, since $k \leq m$, $m \geq 3$ and  $n \leq 2m$ we deduce that
\eas{
\bar{c}_0  \cdot \bar{\delta}^{-2} \cdot 2 k \cdot  \left( \log^2(k/\bar{\delta})   \cdot  \log^2( e n) +\log(4/\epsilon) \right)  & \leq c_0 \cdot k \cdot L(m,\epsilon ) = m,
}
for a suitably-large choice of the universal constant $c_0$. By similar arguments to those of the previous theorem, considering Lemma \ref{l:LegMat_RIP_general} and Remark \ref{rmk:casek}, we deduce that, with probability at least $1-\epsilon/2$, the matrix $\bm{A}$ has the weighted RIP over $\bbR$ of order $(2k,\bm{u})$ with constant 
\be{
\label{delta_chooseUH}
\delta_{2k,\bm{u}} =  \bar{\delta} = \dfrac{1}{4 \sqrt{2}+1}.
}
Hence, by Lemma \ref{l:implies-RIP} it has the weighted rNSP over $\bbR$ of order $(k,\bm{u})$ with constants
 \begin{equation}\label{eq:gammaUH}
\rho= \frac{1}{2},\quad
\gamma = \frac32.
 \end{equation}  
Note that \eqref{eq_A_AtUB} holds in this case, since this property pertains to the matrices $\bm{A}$ and $\bm{A}'$ and not the associated linear operators. We now assume  that $\delta$ satisfies  
\begin{equation}\label{delta_cond_1UH}
\sqrt{N} \delta \leq \tilde{\delta} : = \dfrac{2}{3(3+4\sqrt{k})}.
\end{equation}
 By the same extension of \cite[Lem. 8.5]{adcock2021compressive}  and   from the values of $\rho$ and  $\gamma$  in \eqref{eq:gammaUH} the matrix   $\bm{A}'$ has the weighted rNSP of order $(k,\bm{u})$ over $\bbR$ with constants  
\begin{equation}\label{rho_tau_primeUH}
\frac{\rho+\gamma \tilde{\delta} \sqrt{k}}{1-\gamma \tilde{\delta}} =  \frac34 : = \tilde{\rho},\qquad \frac{\gamma}{1-\gamma \tilde{\delta}} \leq \frac94 : = \widetilde{\gamma},
\end{equation}
with probability at least $1-\epsilon/2$.  
Then, applying \cite[Lem.\ 7.5]{adcock2022efficient} the corresponding operator $\bm{A}' \in \cB (\cV^N, \cV^m)$ satiesfies the weighted rNSP over $\cV $ of order $(k,\bm{u})$ with constants ${\rho}'<1$ and ${\gamma'}>0$, where  ${\rho}' =  \widetilde{\rho}$ and ${\gamma}' = \widetilde{\gamma} $, with probability $1-\epsilon/2$.

\vspace{1pc} \noindent
\textit{Step 3: Estimating the error.} {The setup, Step 3(i) and Step 3(ii) are identical. For Step 3(iii), using Parseval's identity, we may write
\begin{equation}
A_3= \nm{\cP_{K}(f_{\Lambda}) - f_{\hat{\Psi}} }_{L^2_{\varrho}(\cU ; \cV)} = \nm{ \bm{c}_{\Lambda,{K}} - \hat{\bm{c}} }_{2;\cV},
\end{equation}
where $\bm{c}_{\Lambda,{K}} = (\cP_K(c_{\bnu}))_{\bnu \in \Lambda} $.  We now apply Lemma \ref{lemma-musuboptimal} to the problem \eqref{wsr-LASSO_NN}. This lemma gives
\begin{equation*}
\nm{ \bm{c}_{\Lambda,K}  - \hat{\bm{c}}}_{2;\cV}   \leq  \dfrac{C_1'}{\sqrt{k}} \left( 2 {\sigma_{k}(\bm{c}_{\Lambda,K} )_{1,\bm{u};\cV}} 
+\frac{\cG'(\hat{\bm{c}}) - \cG'(\bm{c}_{\Lambda,K} ) }{ \lambda} \right) + \left( \dfrac{C'_1}{\sqrt{k}\lambda} + C'_2 \right)\nm{\bm{A}' \bm{c}_{\Lambda,K}  - \bm{f}}_{2;\cV} ,
\end{equation*}
with probability at least $1-\epsilon/2$, where  $C'_1 =   {(1+\rho')^2}/{(1-\rho')}   $, $C_2 = {(3+\rho') \gamma'}/{(1-\rho')}  $ and $\cG'$ is as in \eqref{wsr-LASSO_NN}.} The values {of $\rho'$ and $\gamma'$}  in \eqref{rho_tau_primeUH} give
\begin{equation*}
\label{lambda-boundUH}
\lambda : = \frac{1}{6 \sqrt{m/L}} = \frac{1}{6 \sqrt{c_0 k}} \leq \frac{1}{6 \sqrt{k}} <  \dfrac{(1+ \rho')^2}{(3+\rho') \gamma' }    \frac{1}{\sqrt{k}}.
\end{equation*}
{Now, since $m  = c_0 \cdot L \cdot k \geq 2 \cdot k \cdot \log(4/\epsilon)$ for a sufficiently large choice of the universal constant $c_0$, once more the arguments in \cite[Lem.\ 7.11]{Adcock2022}  imply that
\begin{equation}\label{eq:disc_2}
 \sqrt{\frac1m \sum^m_{i=1} \nmu{f(\y_i) - f_{\Lambda}(\y_i)}^2_{\cV}} \leq \sqrt{2} \left ( \frac{E_{\Lambda,\infty}(f) }{\sqrt{k}} + E_{\Lambda,2}(f)\right ),
\end{equation}
with probability at least $1-\epsilon/2$. Notice that \eqref{eq:disc_2}   is slightly different to \eqref{eq:disc_1}}.
{Following similar arguments to Step 3(iii), we deduce that $A_3$} satisfies
\begin{equation}
\label{A2bound2}
{A_3}   \lesssim \pi_K \left ( \frac{\sigma_{k}(\bm{c}_{\Lambda})_{1,\bm{u};\cV}}{\sqrt{k}} + {N} \delta + \frac{E_{\Lambda,\infty}(f)}{\sqrt{k}} + E_{\Lambda,2}(f) \right )+ E_{\mathsf{opt}} +E_{\mathsf{samp}} +E_{\mathsf{disc}} 
\end{equation}
with probability at least $1-\epsilon$. Step 3(iv) is essentially the same except that we use the bounds $\lambda \lesssim 1/\sqrt{k}$ and $\sqrt{k} \delta \lesssim 1$ in this case to get the bound
\bes{
A_4 \lesssim \sqrt{k} \delta ( E_{\mathsf{samp}} + 1 + E_{\mathsf{opt}} ) \lesssim {N} \delta + E_{\mathsf{samp}}  + E_{\mathsf{opt}}.
}
Hence, combining the estimates  and using the fact that $k \leq n \leq N$,  we deduce that
\be{
\label{everything_but_the_poly_bounds2}
\begin{split}
\nmu{f -f_{\hat{\Phi},\Theta}}_{L^2_{\varrho}(\cU ; \cV)} 
\lesssim &~
\pi_K \left ( \frac{\sigma_{k}(\bm{c}_{\Lambda})_{1,\bm{u};\cV}}{\sqrt{k}} +  {N} \delta + 
  \frac{  E_{\Lambda,\infty}(f) }{\sqrt{k}} + E_{\Lambda,2}(f) \right )
  \\
   & +E_{\mathsf{samp}} + E_{\mathsf{opt}}+E_{\mathsf{disc}}.
   \end{split}
   }
This concludes Step 3.

\vspace{1pc}\noindent
\textit{Step 4: Establishing the algebraic  rates.}  By the same arguments, except using \R{def_kUH}, we get
\begin{equation*}
\frac{\sigma_{k}(\bm{c}_{\Lambda})_{1,\bm{u};\cV}}{\sqrt{k}} \leq C (\bm{b},\varepsilon,p)  \cdot k^{1/2-1/p} = C (\bm{b},\varepsilon,p)  \cdot  \left ( \frac{m}{c_0 L} \right )^{ ( \frac12-\frac{1}{p})}.
\end{equation*}
Similarly as in Step 4, there exists an anchored set $S \subset \cF$ of size $|S| \leq n$ such that
\begin{equation*}
E_{\Lambda,\infty}(f) = \nm{f - f_{\Lambda}}_{L^{\infty}_{\varrho}(\cU ; \cV)}  \leq   {\nmu{\bm{c}-\bm{c}_{\Lambda}}_{1,\bm{u};\cV}} \leq \nmu{\bm{c}-\bm{c}_{S}}_{1,\bm{u};\cV} \leq C (\bm{b},\varepsilon,p)    \cdot n^{1-1/p}.
\end{equation*}
Now, using   Corollary  \ref{cor:anchored_case_lpM_space}  with $q=2$ we get
\begin{equation*}
 E_{\Lambda,2}(f)=\nmu{f - f_{\Lambda}}_{L^{2}_\varrho(\cU;\cV)} \leq   {\nmu{\bm{c}-\bm{c}_{\Lambda}}_{2;\cV}}  \leq C (\bm{b},\varepsilon,p)    \cdot n^{1/2-1/p}.
\end{equation*}
Hence
\begin{equation*}
E_{\Lambda}(f)= \frac{ E_{\Lambda,\infty}(f) }{\sqrt{k}} +  E_{\Lambda,2}(f) \lesssim C(\bm{b},\varepsilon,p) \cdot \left ( k^{-1/2} n^{1-1/p} + n^{1/2-1/p} \right ) \lesssim C(\bm{b},\varepsilon,p) \cdot \left ( \frac{m}{c_0 L} \right )^{\frac12-\frac1p}.
\end{equation*}
Here, in the final step, we used the definitions of $k$ and $n$ and the fact that $p < 1$.
Therefore, using \R{everything_but_the_poly_bounds2} we conclude that
\begin{equation*}
\begin{split}
\nmu{f -f_{\hat{\Phi},\Theta}}_{L^2_{\varrho}(\cU ; \cV)} &\lesssim C(\bm{b},\varepsilon,p) \cdot \pi_K \cdot \left ( \frac{m}{c_0 L} \right )^{(\frac12- \frac{1}{p})} +  E_{\mathsf{samp}} + E_{\mathsf{opt}}+E_{\mathsf{disc}},
\end{split}
\end{equation*}
provided {$\delta \leq N^{-1} k^{1/2-1/p}$}. Hence, in view of \R{delta_cond_1UH}, we now set {$\delta = \min \left \{ \frac{2}{3(3+4 \sqrt{k}) \sqrt{N}} , \frac{k^{\frac12-\frac1p}}{{N}} \right \}$}.

\vspace{1pc}\noindent
\textit{Step 5: Bounding the {width and depth} of the DNN architecture.} This step is essentially the same. Since bound \R{delta-LB} remains valid, the only possible difference is in the various universal constants.

\pbk
Note that in this proof we do not need the assumption $k<1$. Hence, Step 6 is not necessary.
\end{proof}

\subsection{Known anisotropy case}

\begin{proof}[Proof of Theorem \ref{t:mainthm2}] We proceed in similar steps to those of the previous two theorems.

\pbk
\textit{Step 1: Problem setup.} Let $S \subset \cF$ be a finite index set and write $s = |S|$. We will choose a suitable $S$ in Step 4 below. Now let $\Theta \subset \bbN$ be any set for which
\be{
\label{Theta-def-known}
 \bigcup_{\bm{\nu} \in S} \mathrm{supp}(\bm{\nu}) \subseteq \Theta.
}
Notice that the left-hand side is a finite set, since $S$ is finite and any multi-index $\bm{\nu} \in \cF$ has only finitely many nonzero terms. Hence $\Theta$ can be chosen as a finite set.  We make a precise choice of $\Theta$ in Step 4 once we have defined $S$. Next, let
$\Phi_{S,\delta}$ be as in Theorem \ref{Prop_Ex_NN}, where $\delta > 0$ will also be chosen in Step 4, and 
consider the class of DNNs \eqref{class_DNN} with $S$ in place of $\Lambda$ and $s$ in place of $N$.

\pbk
\textit{Step 2: Establishing the weighted rNSP.} The main difference in this case is the use of Lemma \ref{LS-wrNSP} to assert the weighted rNSP instead of Lemma \ref{l:implies-RIP}.
Let $\bm{A},\bm{A}' \in \bbR^{m \times s}$ be given by \eqref{def-measMatrix-known} and set
\be{
\label{def_kBK}
\bar{k} := \frac{m}{11 L} \leq \frac{m}{2},
}
where $L = L(m,\epsilon) \geq 1$ is as in \R{def_L-known}. We now make the following assumption:
\be{
\label{k-assume-known}
\bar{k} \geq k : = |S|_{\bm{u}}.
}
Later, when we construct the set $S$ in Step 4 we will verify that this holds. We now apply Lemma \ref{LS-wrNSP} with $\delta = 2/5$. Notice that
\bes{
m = 11 \cdot \bar{k} \cdot L(m,\epsilon) \geq ((1-\delta) \log(1-\delta) + \delta)^{-1} \cdot k \cdot \log(2k/\epsilon).
}
Hence, with probability at least $1-\epsilon/2$, the matrix $\bm{A}$ has the weighted rNSP over $\bbR$ of order $(k,\bm{u})$ with constants $\rho = 0$ and $\gamma = \sqrt{5/3}$. Or equivalently (recall the proof of Lemma \ref{LS-wrNSP}), the bound
\be{
\label{wrNSP-known-case}
\nm{\bm{x}}_2 \leq \sqrt{5/3} \nm{\bm{A} \bm{x}}_2,\quad \forall \bm{x} \in \bbR^{s},
}
holds with probability at least $1-\epsilon/2$. Now, much as before, we have
\begin{equation}\label{eq_A_AtBK}
\nmu{\bm{A}- \bm{A}'}_{2} \leq \sqrt{s} \delta.
\end{equation}
Suppose that
\be{
\label{delta_cond_1BK}
\sqrt{s} \delta \leq \frac{\sqrt{3}}{2 \sqrt{5}}.
}
Then, if \R{wrNSP-known-case} holds, we have
\bes{
\nm{\bm{x}}_2 \leq \sqrt{5/3} \nm{\bm{A}' \bm{x}}_2 + \nm{\bm{x}}_2 / 2,\quad \forall \bm{x} \in \bbR^{s},
}
which implies that
\bes{
\nm{\bm{x}}_2 \leq 2 \sqrt{5/3} \nm{\bm{A}' \bm{x}}_2,\quad \forall \bm{x} \in \bbR^{s}.
}
We deduce that, with probability at least $1-\epsilon/2$, $\bm{A}'$ has the weighted rNSP over $\bbR$ of order $(k,\bm{u})$ with constants $\rho = 0$ and $\gamma = 2 \sqrt{5/3}$. Finally, applying Lemma \ref{l:from_R_to_V_rNSP} (and recalling that $\bm{u}\geq 1$), we deduce that the corresponding operator $\bm{A}' \in \cB (\cV^{s}, \cV^m)$ satisfies the weighted rNSP over $\cV $ of order $(k,\bm{u})$ with constants $\rho' = 0$ and ${\gamma}' = 2 \sqrt{k} \sqrt{5/3}$, with the same probability.

\pbk
\textit{Step 3: Estimating the error.} Let $f \in \cH(\bm{b},\varepsilon)$. This step is again similar to Step 3 of the proof of Theorem \ref{t:mainthm1}. We first write
\bes{
\nmu{f -f_{\hat{\Phi},\Theta}}_{L^2_{\varrho}(\cU ; \cV)} \leq {A_1 + A_2 + A_3 + A_4,}
}
with {$A_1$, $A_2$, $A_3$ and $A_4$} defined in the same way, except with $\Lambda$ replaced by $S$ throughout.

\pbk
\textit{Step 3(i): Bounding $A_1$.} This step is identical and gives $A_1 \leq E_{\mathsf{disc}}$.

\pbk
\textit{Step 3(ii): Bounding $A_2$.} {This step is identical and gives $A_2 \leq \pi_K E_{S,2}(f)$.}

\pbk
\textit{Step 3(iii): Bounding $A_3$.}
{Let $\bm{u}$ be the intrinsic weights in \eqref{def:int_weights}, using the definition of $k$ in \eqref{k-assume-known} and {the Cauchy-Schwarz inequality} we get}
\begin{equation*}
{A_3 \leq \nmu{\bm{c}_{S,K} - \hat{\bm{c}}}_{1,\bm{u};\cV}= \sum_{\bnu \in S} u_{\bnu}   \nmu{\bm{c}_{\bnu,K} - \hat{\bm{c}}_{\bnu}}_{\cV} 
\leq \left( \sum_{\bnu \in S} u_{\bnu}^2 \right )^{1/2} \left (\sum_{\bnu \in S}   \nmu{\bm{c}_{\bnu,K} - \hat{\bm{c}}_{\bnu}}_{\cV}^2 \right)^{1/2}}
\end{equation*}
{
and therefore
\begin{equation*}
A_3 \leq \sqrt{k}\nmu{\bm{c}_{S,K} - \hat{\bm{c}}}_{2;\cV}.
\end{equation*}
}
We now use the weighted rNSP for $\bm{A}'$ to deduce that
\begin{equation*}
A_3 \lesssim {k} \nmu{\bm{A}'(\bm{c}_{S,K} - \hat{\bm{c}})}_{2;\cV} \leq {k} \left ( \nmu{\bm{A}' \bm{c}_{S,K} - \bm{f} }_{2;\cV} + \nmu{\bm{A}' \hat{\bm{c}} - \bm{f} }_{2;\cV} \right ).
\end{equation*}
Now $\hat{\bm{c}}$ is an approximate minimizer of \R{def:G_LS_probelmA} and $\bm{c}_{S,K} \in \cV^{s}_K$ is feasible. Therefore
\begin{equation*}
A_3 \lesssim {k} \left ( 2 \nmu{\bm{A}' \bm{c}_{S,K} - \bm{f} }_{2;\cV} + E_{\mathsf{opt}} \right ).
\end{equation*}
Via the same arguments as before, we now bound
\begin{equation*}
\nmu{\bm{A}' \bm{c}_{S,K} - \bm{f} }_{2;\cV} \leq \pi_K {s} \delta + \pi_K \sqrt{\frac1m \sum^{m}_{i=1} \nmu{f(\y_i) - f_{S}(\y_i) }^2_{\cV}} + E_{\mathsf{disc}} + E_{\mathsf{samp}}.
\end{equation*}
Now, it follows from \R{def_kBK} and \R{k-assume-known} that $m = 11 \bar{k} L \geq 11 k (\log(m) + \log(1/\epsilon) ) \geq 2 k \log(4/\epsilon)$. Hence, minor modification of \cite[Lemma 7.11]{Adcock2022} gives 
\begin{equation*}
 \sqrt{\frac1m \sum_i \nmu{f(\y_i) - f_{{S}}(\y_i)}^2_{\cV}}  \leq \sqrt{2} \left ( \frac{E_{S,\infty}(f) }{\sqrt{k}} +E_{S,2}(f)  \right )
\end{equation*}
with probability at least $1-\epsilon/2$. Combining this with the previous bound, we deduce that
\begin{equation}
{A_3 \lesssim \pi_K \sqrt{k}\left ( \sqrt{k} {s} \delta  +E_{S,\infty}(f) +\sqrt{k} E_{S,2}(f) \right ) + {k} \left ( E_{\mathsf{opt}} + E_{\mathsf{disc}} + E_{\mathsf{samp}} \right ).}
\end{equation}
with probability at least $1-\epsilon$.

\pbk
\textit{Step 3(iv). {Bounding $A_4$.}} As in the corresponding step in the previous proofs, we first write
\begin{equation*}
A_{4} = \nmu{f_{\hat{\Psi}} - f_{\hat{\Phi},\Theta}}_{L^2_{\varrho}(\cU ; \cV)}  \leq \sum_{\bm{\nu} \in S} \nm{\Psi_{\bm{\nu}} - \Phi_{\bm{\nu},\delta,\Theta} }_{L^2_{\varrho}(\cU)}   \nm{\hat{c}_{\bm{\nu}}}_{\cV} \leq \delta   \sqrt{s} \nm{\hat{\bm{c}}}_{2;\cV}.
\end{equation*}
Since $\bm{A}'$ has the weighted rNSP and $\bm{\hat{c}}$ is an approximate minimizer, we get
\begin{equation*}
\begin{split}
\nm{\hat{\bm{c}}}_{2;\cV}& \lesssim \sqrt{k} \nm{\bm{A}' \hat{\bm{c}}}_{2;\cV} 
\\
& \leq  \sqrt{k} (\nm{\bm{A}' \hat{\bm{c}}-\bm{f}}_{2;\cV}+ \nm{\bm{f}}_{2;\cV})\\
& \leq  \sqrt{k} (\nm{\bm{A}'\bm{0}-\bm{f}}_{2;\cV}+ \nm{\bm{f}}_{2;\cV}+E_{\mathsf{opt}}) 
\\
& \leq \sqrt{k}  (E_{\mathsf{samp}}+1+ E_{\mathsf{opt}}).
\end{split}
\end{equation*}
Note that $\delta \sqrt{s} \lesssim 1$ due to \R{delta_cond_1BK}. Hence we obtain
\begin{equation*}
A_4 \lesssim \sqrt{s} \sqrt{k} \delta + \sqrt{k} (  E_{\mathsf{samp}}+E_{\mathsf{opt}}).
\end{equation*}

\pbk
\textit{Step 3(v). Final bound.} Combining the estimates for {$A_1$, $A_2$, $A_3$ and $A_4$} from the previous substeps and noticing that $\pi_K \geq 1$ by definition and $k = |S|_{\bm{u}} \geq 1$ (since $\bm{u}\geq \bm{1}$), we obtain
\begin{equation}
\label{Step3-known-bound}
{\nmu{f -f_{\hat{\Phi},\Theta}}_{L^2_{\varrho}(\cU ; \cV)} \lesssim
  \pi_K \left ( {k} {s} \delta  +\sqrt{k} E_{S,\infty}(f) +{k} E_{S,2}(f) \right ) 
 + {k} \left ( E_{\mathsf{opt}} + E_{\mathsf{disc}} + E_{\mathsf{samp}} \right ) .}
\end{equation}

\pbk
\textit{Step 4: Establishing the algebraic rates.} Suppose that $\bm{b} \in \ell^p(\bbN)$ (we address the case $\bm{b} \in \ell^p_{\mathsf{M}}(\bbN)$ in Step 6 below). We now make a suitable choice of $S$ so as to obtain the desired algebraic rates of convergence.

We first apply part (ii) of Theorem \ref{thm:best_s-term_inf-dim} with $\bar{k}$ in place of $k$. Let $q = 1$. Then this guarantees the existence of a set $S_1$ with {$|S_1|_{\bm{u}} \leq \bar{k}$} such that
\begin{equation}
{
\nmu{\bm{c} - \bm{c}_{S_1}}_{2; \cV} \leq \nmu{\bm{c} - \bm{c}_{S_1}}_{1,\bm{u}; \cV} \leq C(\bm{b}, \varepsilon, p ) \cdot \bar{k}^{1-1/p}.}
\end{equation}
We now define
\be{
\label{S-known-define}
S = {S_1  \cap \Lambda},\quad \text{where }
\Lambda = \Lambda^{\mathsf{HC}}_{\lceil \bar{k} \rceil,\infty} = \left \{ \bm{\nu} = (\nu_k)^{\infty}_{k=1} \in \cF : \prod_{k : \nu_k \neq 0} (\nu_k + 1) \leq \lceil \bar{k} \rceil \right \}.
}
Observe that {$|S|_{\bm{u}} \leq |S_1|_{\bm{u}} \leq \bar{k}$}. Therefore \R{k-assume-known} holds for this choice of $S$. Note also that $S$ is independent of $f \in \cH(\bm{b},\varepsilon)$ and depends only on $\bm{b}$, $\varepsilon$ (see Remark \ref{rem:S-idpt-f}).

Having defined $S$, we now bound
\begin{equation*}
{E_{S,2}(f) \leq E_{S,\infty}(f) }=\nmu{f - f_S}_{L^{\infty}_{\varrho}(\cU;\cV)} \leq \nmu{\bm{c} - \bm{c}_{S}}_{1,\bm{u};\cV} 
\leq C(\bm{b},\varepsilon,p) \cdot \bar{k}^{1-1/p}+\nmu{\bm{c}-\bm{c}_{\Lambda}}_{1,\bm{u};\cV}
\end{equation*}
Now, the set $\Lambda$ is precisely the union of all lower sets (see Definition \ref{d:lower-anchored-set}) of size at most $\lceil \bar{k} \rceil$ (see, e.g., \cite[Prop.\ 2.5]{Adcock2022}). Hence, by part (i) of Theorem \ref{thm:best_s-term_inf-dim},
\bes{
{\nmu{\bm{c} - \bm{c}_{\Lambda}}_{1,\bm{u};\cV} \leq C(\bm{b},\varepsilon,p)  \cdot  \bar{k}^{1-1/p}.}
}
Since $k \leq \bar{k}$, we deduce that
\begin{equation*}
{\sqrt{k}E_{S,\infty}(f) + {k} E_{S,2}(f)  \leq C(\bm{b},\varepsilon,p) \cdot \bar{k}^{2-1/p}.}
\end{equation*}
Substituting this bound into \R{Step3-known-bound} gives
\begin{equation*}
{\nmu{f -f_{\hat{\Phi},\Theta}}_{L^2_{\varrho}(\cU ; \cV)} \lesssim
  C(\bm{b},\varepsilon,p) \cdot \pi_K \left ( {k} {s} \delta  + \bar{k}^{2-\frac1p} \right ) 
 + {k} \left ( E_{\mathsf{opt}} + E_{\mathsf{disc}} + E_{\mathsf{samp}} \right ) }.
\end{equation*}
We now set 
\begin{equation}\label{delta_defBK}
\delta = \min \left \{ \frac{\sqrt{3}}{2 \sqrt{5} \sqrt{\bar{k}}} ,{ \bar{k}^{-\frac1p} }\right \}.
\end{equation}
Notice that \R{delta_cond_1BK} holds for this choice of $\delta$, since $s = |S| \leq |S|_{\bm{u}} = k \leq \bar{k}$. Substituting this into the previous expression and using the definition \R{def_kBK} of $\bar{k}$ and \R{k-assume-known} now gives
\be{
\label{known-bound-we-want}
\nmu{f -f_{\hat{\Phi},\Theta}}_{L^2_{\varrho}(\cU ; \cV)} \lesssim
  E_{\mathsf{app},\mathsf{KB}} 
 + m\left ( E_{\mathsf{opt}} + E_{\mathsf{disc}} + E_{\mathsf{samp}} \right ) ,
}
as required.

\vspace{1pc} \noindent \textit{Step 5: Bounding the {width and depth} of the DNN architecture.} We first consider $\Theta$.  Recall that $\Theta$ must satisfy \R{Theta-def-known}. By construction, any $\bm{\nu} \in S$ is also an element of $\Lambda$, and therefore $2^{\nm{\bm{\nu}}_0} \leq \lceil \bar{k} \rceil$. Hence $|\supp(\bm{\nu}) | = \nm{\bm{\nu}}_0 \leq \log_2( \lceil \bar{k} \rceil)$. Since $|S| = s$, we deduce that
\bes{
\left |  \bigcup_{\bm{\nu} \in S} \mathrm{supp}(\bm{\nu}) \right | \leq s \log_2(\lceil \bar{k} \rceil).
}
Observe that $s = |S| \leq |S|_{\bm{u}} \leq \bar{k}$, since $\bm{u}\geq 1$. Using the definition \R{def_kBK} of $\bar{k}$, the fact that $m \geq 3$ and the definition \R{def_L-known} of $L$, we see that
\bes{
s \log_2(\lceil \bar{k} \rceil) \leq \frac{m}{11 L} \frac{\log(m)}{\log(2)} \leq \frac{m}{11 \log(2)}  \leq n,
}
where $n$ is as in \R{def_n_known}. Thus, we now choose $\Theta$ as any set of size $n$ that satisfies \R{Theta-def-known}.

We now estimate the {width and depth} of the DNN architecture. First, observe that
\[
 \log(\delta^{-1}) \lesssim  p^{-1} \log(\bar{k}) \leq p^{-1} \log(m).
\]
In addition, due to the choice \R{S-known-define}, we have
\be{
\label{mS-Lambda-bound}
m(S) =  \max_{\bm{\nu}\in S}\|\bm{\nu}\|_1 \leq  \max_{\bm{\nu}\in\Lambda}\|\bm{\nu}\|_1\leq \lceil \bar{k} \rceil \leq m.
}
Hence, applying  Theorem~\ref{Prop_Ex_NN} with the set $S$ in place of $\Lambda$ and $\Theta$ as chosen above, we deduce that the width and depth in the case of the ReLU activation function satisfy  
\bes{
   \mathrm{width}(\cN^{  1} )  \lesssim     m ^2, \quad
    \mathrm{depth}( \cN^{ 1} )   \lesssim  \left(1+ \log(m) \left ( p^{-1} \log(m)+ m \right ) \right ) .
}
Here, we also used the facts that $s = |S| \leq k \leq \bar{k} \leq m$ and $n \lesssim m$.
 Now, for either the RePU or tanh activation function, we have
  \begin{align*}
  \mathrm{width}(\cN^j)  \leq c_{j,1}  \cdot m^2,  \quad 
  \mathrm{depth}(\cN^j ) \leq c_{j,2}  \cdot  \log_2(m ),
\end{align*}
where  $c_{j,1}$, $c_{j,2}$ are universal constants for the  tanh  activation function  ($j=0$) and $c_{j,1},c_{j,2}$ depend on ${\ell}$ for the RePU activation function ($j={\ell}$).  This gives the desired bounds.

\pbk
\textit{Step 6: Modifying the proof in the case $\bm{b} \in \ell^p_{\mathsf{M}}(\bbN)$.} In this case, we replace the definition of $S$ in \R{S-known-define} with
\bes{
S = S_1  \cap \Lambda,\quad \text{where }
\Lambda = \Lambda^{\mathsf{HCI}}_{\lceil \bar{k} \rceil},
}
and $\Lambda^{\mathsf{HCI}}_{\lceil \bar{k} \rceil}$ is as in \R{HC_index_set_inf}. Recall from the discussion in \S \ref{S:Unk_recov} that this set contains all anchored sets of size at most $\lceil \bar{k} \rceil$. Thus, we may argue as in Step 4, but using Corollary \ref{cor:anchored_case_lpM_space} instead of Theorem \ref{thm:best_s-term_inf-dim} to bound the error $\bm{c} - \bm{c}_{\Lambda}$. Doing so, and using exactly the same value for $\delta$ yields an identical bound \R{known-bound-we-want}.

We now modify Step 5 accordingly. By definition of $\Lambda^{\mathsf{HCI}}_{\lceil \bar{k} \rceil}$, any multi-index $\bm{\nu} \in S$ must satisfy $\mathrm{supp}(\bm{\nu}) \subseteq \{1,\ldots,\lceil \bar{k} \rceil \}$. It follows from \R{def_kBK} that $\lceil \bar{k} \rceil \leq n$, where $n$ is as in \R{Theta-n-choice-monotone}. Hence we may take $\Theta$ as in \R{Theta-n-choice-monotone}. Finally, we note that \R{mS-Lambda-bound} also holds for this choice of $S$. Thus the bounds for the widths and depths of the various DNN classes hold in this case as well.
\end{proof}
\begin{proof}[Proof of Theoorem \ref{t:mainthm2H}] 
The proof involves several  modifications to that of the previous theorem. {Step 1 is identical.} In Step 2, instead of Lemma \ref{l:from_R_to_V_rNSP} we use  \cite[Lem.\ 7.5]{adcock2022efficient} to deduce that the operator $\bm{A}' \in \cB (\cV^s, \cV^m)$ has the weighted rNSP over $\cV $ of order $(k,\bm{u})$ with $k$-independent constants $\rho'  = 0 $ and   ${\gamma'} = 2 \sqrt{5/3}$, with probability at least $1-\epsilon/2$.

\pbk
{\textit{Step 3: Estimating the error.} Let $f \in \cH(\bm{b},\varepsilon)$. Consider the same setup as before, with
\bes{
\nmu{f -f_{\hat{\Phi},\Theta}}_{L^2_{\varrho}(\cU ; \cV)} \leq A_1 + A_2 + A_3+A_4,
}
and $A_1$, $A_2$, $A_3$ and $A_4$ defined in the same way. Step 3(i) and Step 3(ii) are identical.}

\pbk
{\textit{Step 3(iii): Bounding $A_3$.} As in the corresponding step in the proof of Theorem \ref{t:mainthm1H}, using Parseval's identity we first write
\bes{
A_3 = \nmu{\bm{c}_{S,K} - \hat{\bm{c}}}_{2;\cV}.
}
Following the same arguments as before, noticing that $\gamma' \lesssim 1$, we deduce that
\begin{equation*}
A_3 \lesssim \pi_K \left (  \sqrt{s} \delta  +\dfrac{E_{S,\infty}(f)}{\sqrt{k}}  +E_{S,2}(f) \right ) +   E_{\mathsf{opt}} + E_{\mathsf{disc}} + E_{\mathsf{samp}}.
\end{equation*}
with probability at least $1-\epsilon$.}

\pbk{
\textit{Step 3(iv). Bounding $A_4$.} Using the same arguments as in the corresponding step in the previous proofs,  we obtain
\begin{equation}
A_4 \lesssim \sqrt{s}   \delta +   E_{\mathsf{samp}}+E_{\mathsf{opt}}.
\end{equation}
}

\pbk{
\textit{Step 3(v). Final bound.} Combining the estimates for $A_1$, $A_2$, $A_3$ and $A_4$ from the previous substeps  we obtain
\begin{equation*}
\nmu{f -f_{\hat{\Phi},\Theta}}_{L^2_{\varrho}(\cU ; \cV)} \lesssim
  \pi_K \left (  \sqrt{s} \delta  +\dfrac{E_{S,\infty}(f)}{\sqrt{k}}   + E_{S,2}(f) \right ) 
 + E_{\mathsf{opt}} + E_{\mathsf{disc}} + E_{\mathsf{samp}}  .
\end{equation*}}

\pbk
{
\textit{Step 4: Establishing the algebraic rates.} Suppose that $\bm{b} \in \ell^p(\bbN)$.
We now apply part (ii) of Theorem \ref{thm:best_s-term_inf-dim} with $\bar{k}/2$ in place of $k$. Let $q = 1$. Then this guarantees the existence of a set $S_1$ with $|S_1|_{\bm{u}} \leq \bar{k}/2$ such that
\bes{
\nmu{\bm{c} - \bm{c}_{S_1}}_{1,\bm{u}; \cV} \leq C(\bm{b}, \varepsilon, p ) \cdot \bar{k}^{1-1/p}.
}
Similarly, letting $q = 2$, we obtain a set $S_2$ with $|S_2|_{\bm{u}} \leq \bar{k}/2$ such that
\bes{
\nmu{\bm{c} - \bm{c}_{S_2}}_{2; \cV} \leq C(\bm{b}, \varepsilon, p ) \cdot \bar{k}^{1/2-1/p}.
}
Instead of \eqref{S-known-define}, we now define
\begin{equation}
\label{S-known-define_2}
S = (S_1 \cup S_2 ) \cap \Lambda,\quad \text{where }
\Lambda = \Lambda^{\mathsf{HC}}_{\lceil \bar{k} \rceil,\infty} = \left \{ \bm{\nu} = (\nu_k)^{\infty}_{k=1} \in \cF : \prod_{k : \nu_k \neq 0} (\nu_k + 1) \leq \lceil \bar{k} \rceil \right \}.
\end{equation}
Observe that $|S|_{\bm{u}} \leq |S_1|_{\bm{u}} + |S_2|_{\bm{u}} \leq \bar{k}$. Therefore \R{k-assume-known} holds for this choice of $S$. Once more $S$ is independent of $f \in \cH(\bm{b},\varepsilon)$ and depends only on $\bm{b}$, $\varepsilon$.
}

{
Having defined $S$, we now bound 
\begin{equation*}
E_{S,\infty}(f) =\nmu{f - f_S}_{L^{\infty}_{\varrho}(\cU;\cV)} \leq \nmu{\bm{c} - \bm{c}_{S}}_{1,\bm{u};\cV} 
\leq C(\bm{b},\varepsilon,p) \cdot \bar{k}^{1-1/p}+\nmu{\bm{c}-\bm{c}_{\Lambda}}_{1,\bm{u};\cV}
\end{equation*}
and, by Parseval's identity,
\begin{equation*}
E_{S,2}(f) =\nmu{f - f_S}_{L^2_\varrho(\cU;\cV)} = \nmu{\bm{c}-\bm{c}_S}_{2;\cV} 
\leq C(\bm{b}, \varepsilon, p ) \cdot \bar{k}^{1/2-1/p}+\nmu{\bm{c}-\bm{c}_{\Lambda}}_{2;\cV}.
\end{equation*}
Following similar arguments as before, by part (i) of Theorem \ref{thm:best_s-term_inf-dim}, we get
\bes{
\nmu{\bm{c} - \bm{c}_{\Lambda}}_{2;\cV} \leq C(\bm{b},\varepsilon,p) \cdot \bar{k}^{1/2-1/p},\qquad \nmu{\bm{c} - \bm{c}_{\Lambda}}_{1,\bm{u};\cV} \leq C(\bm{b},\varepsilon,p)  \cdot  \bar{k}^{1-1/p}.
}
Since $k \leq \bar{k}$, we deduce that
\begin{equation*}
\dfrac{E_{S,\infty}(f)}{\sqrt{k}} + E_{S,2}(f)  \leq C(\bm{b},\varepsilon,p) \cdot \bar{k}^{1/2-1/p}.
\end{equation*}
Substituting this bound into \R{Step3-known-bound} gives
\begin{equation*}
\nmu{f -f_{\hat{\Phi},\Theta}}_{L^2_{\varrho}(\cU ; \cV)} \lesssim
  \pi_K \left ( \sqrt{s} \delta  + \bar{k}^{1/2-1/p} \right ) 
 +  E_{\mathsf{opt}} + E_{\mathsf{disc}} + E_{\mathsf{samp}} .
\end{equation*}
Arguing in the same way as Step 4, and making a similar choice as in \eqref{delta_defBK}  for $\delta$, we see that
\begin{equation*}
\nmu{f -f_{\hat{\Phi},\Theta}}_{L^2_{\varrho}(\cU ; \cV)} \lesssim
  E_{\mathsf{app},\mathsf{KH}} 
 +   E_{\mathsf{opt}} + E_{\mathsf{disc}} + E_{\mathsf{samp}},
\end{equation*}
as required.
}

{
Step 5 is identical. For Step 6,  we replace the definition of $S$ in \eqref{S-known-define_2} with
\begin{equation}
S = (S_1 \cup S_2 ) \cap \Lambda,\quad \text{where }
\Lambda = \Lambda^{\mathsf{HCI}}_{\lceil \bar{k} \rceil},
\end{equation}
and $\Lambda^{\mathsf{HCI}}_{\lceil \bar{k} \rceil}$ is as in \R{HC_index_set_inf}.  Finally, we note that \R{mS-Lambda-bound} also holds for this choice of $S$. Thus the bounds for the widths and depths of the various DNNs hold in this case as well.}
\end{proof}

\section{Conclusions and open problems}\label{s:conclusions}
 
This paper developed a theoretical foundation for DL of infinite-dimensional, Banach-valued, holomorphic functions from limited data. Our main contribution was four \textit{practical existence theorems} showing the existence of DNN architectures and training procedures based on (regularized) $\ell^2$-loss functions that learn such functions with explicit error bounds that are independent of the dimension and account for all primary error sources.
We achieved this by using DNNs to emulate certain polynomial approximation methods. Our main theoretical contributions were the nontrivial extensions to the infinite-dimensional and Banach-valued function settings, and the consideration of both known and unknown parametric anisotropy.
In particular, we narrowed a key gap in the theory of approximating with DNNs in Hilbert spaces and Banach spaces, by showing near-optimal algebraic decay rates with respect to the amount of training data $m$. In all cases, we also demonstrated robustness of the training procedure to the other sources of error in the training problem.

There are several interesting directions for future research. First, whether or not the rates of decay of the approximation error can be improved in the Banach-valued case is an open problem. We conjecture that they can, and optimal rates can be shown for Banach-valued function approximation with DNNs{, if there is a way to avoid using the  $\ell^1_{\bm{u}}$-norm to bound the coefficients in Step 3(iii)}. However, this will require a different approach,  and is the subject of a future work. It is worth noting, however, that optimal rates can be shown for certain spaces $\cV_K$.  In particular, following a different proof strategy \cite[Sec.~13.2.1]{adcock2021compressive}, one could  use the basis $\{\varphi_i\}_{i=1}^K$ to assert  a weighted RIP-type bound of the form
\begin{equation*}
\alpha_K (1-\delta)\| \bm{v}\|_{2;\cV} 
\leq
\|\bm{A} \bm{v}\|_{2;\cV} 
\leq
\beta_K (1+\delta)\| \bm{v}\|_{2;\cV}
\end{equation*}
for all $s$-sparse vectors $\bm{v} \in \cV_K^N$, where $\alpha_K$ and $\beta_K$ depend the space $\cV_K$. However, this dependence leads to (typically undesirable) convergence rates of the type  $(\varpi_K m) ^{1/2-1/p}$, where $\varpi_K$ depends on $\beta_K/\alpha_K$. The construction and implementation of suitable discretizations -- i.e., those for which $\beta_K/\alpha_K \lesssim 1$ -- is nontrivial and go beyond the scope of this work (see, e.g., \cite{urban2009wavelet} and references therein for more information on this topic).

{Second, our results only consider error bounds in the $L^2_{\varrho}(\cU;\cV)$-norm. It is interesting to extend them to the $L^{\infty}_{\varrho}(\cU;\cV)$-norm.  We conjecture that similar upper bounds hold in this norm. In particular, in the Hilbert-valued case, we expect the approximation error to behave like $(m/L)^{1-1/p}$ instead of $(m/L)^{1/2-1/p}$. However, while the latter rate is optimal  (up to the polylogarithmic factor $L$) for the $L^2_{\varrho}(\cU;\cV)$-norm, it is currently unknown whether the former rate is optimal for the $L^{\infty}_{\varrho}(\cU;\cV)$-norm. Indeed, \cite{Adcock2022learning} only considers optimal approximation rates in the $L^2_{\varrho}(\cU;\cV)$-norm and it is unclear if it can be extended to the $L^{\infty}_{\varrho}(\cU;\cV)$-norm.}

Third, as described in \cite{Bhattacharya2021}, existing DL approaches for parametric DEs are often not robust when the mesh (e.g., a finite difference or finite element mesh) used to simulate the training data is refined, since the DNN architecture depends on the mesh size. In our work, the DNN architecture depends on the space $\cV_K$. This could correspond to the mesh used to generate the data, leading to a non-mesh invariant approach. But, as in \cite{Bhattacharya2021}, $\cV_K$ could also be constructed in a different manner, e.g., via PCA in the case of Hilbert spaces, leading to a mesh-invariant scheme. There are various open problems in this direction; for example, how to compute a reduced dimension space $\cV_K$ when $\cV$ is a Banach space, or how to perform adaptive mesh refinement (see, e.g., \cite{Eigel2021b,Eigel2015}).

{Fourth, our main focus in this work has been to establish practical existence theorems that are nearly optimal in the sense of the algebraic convergence of the approximation error with respect to $m$. We have not strived to optimize the width and depths of the corresponding DNNs. In the RePU and tanh cases, the depth grows logarithmically in $m$, which is reasonable in practice. However, the width bounds are $\ord{m^2}$ in the known anisotropy case and $\ord{m^{3+\log_2(m)}}$ in the unknown case. The latter, in particular, grows superalgebraically in $m$. In the known anisotropy case, it may be possible to reduce this quadratic scaling by finding a more efficient way to emulate polynomials using DNNs. However, the primary reason for the superalgebraic growth in the unknown case stems not from the specific emulation procedure, but from the need to emulate all polynomials in the large index set \R{HC_index_set_inf}: recall the cardinality bound \R{N_bound}. Finding a way to avoid forming all polynomials in this index set would be useful not just for improving the width bounds in these practical existence theorems. It would also be extremely helpful for the underlying compressed sensing-based polynomial approximation schemes, as these schemes suffer from high computational cost for precisely this reason \cite{adcock2022efficient}. }

Finally, this work focuses on Banach-valued function approximation problems that may arise from the solution of a parametric PDE parametrized by an infinite vector $\bm{y}$. As noted, a different approach involves learning the operator directly in terms of, for instance, the diffusion coefficient in the case of \R{eq:diffusion}. Recent work \cite{Herrmann2022} has established expression rate bounds for such operator learning problems. It would be interesting to see if our work could be extended in this direction to show the existence of sample-efficient training strategies for generalized operator learning problems.

\section*{Acknowledgments}

BA acknowledges the support of the Natural Sciences and Engineering Research Council of Canada of Canada (NSERC) through grant RGPIN-2021-611675. SB acknowledges the support of the Natural Sciences and Engineering Research Council of Canada (NSERC) through grant RGPIN-2020-06766, the Fonds de recherche du Québec Nature et Technologies (FRQNT) through grant 313276, the Faculty of Arts and Science of Concordia University, and the CRM Applied Math Lab. 
ND acknowledges support from a PIMS postdoctoral fellowship. {The authors would like to thank Dinh D\~ung and the two anonymous referees for helpful comments and suggestions.}

\small
\bibliographystyle{plain}
\bibliography{refb}

\end{document}